\newcommand{\rref}{\mathrm{ref}} 
\newcommand{\vphi}{\varphi}
\newcommand{\veps}{\varepsilon}
\newcommand{\DD}{\mathbf{D}} 
\newcommand{\dd}{\partial}
\newcommand{\md}{\mathrm{d}}
\newcommand{\R}{\mathbb{R}}
\newcommand{\rmnum}[1]{\romannumeral #1} 
\newcommand{\Rmnum}[1]{\uppercase\expandafter{\romannumeral#1}} 
\newcommand{\Ee}{\mathcal{E}}
\newcommand{\Dd}{\mathcal{D}}
\newcommand{\Ff}{\mathcal{F}}
\newcommand{\PP}{\mathbb{P}} 
\newcommand{\EE}{\mathbb{E}} 
\newcommand{\indi}{\mathbf{1}} 
\newcommand{\myset}[1]{\left\{#1\right\}}
\newcommand{\mybar}[1]{\overline{#1}}
\newtheorem{mythm}{Theorem}[section]
\newtheorem{myprop}[mythm]{Proposition}
\newtheorem{mylem}[mythm]{Lemma}
\newtheorem{mycor}[mythm]{Corollary}
\newtheorem{myrmk}[mythm]{Remark}
\begin{document}

\title{Determination of the Walk Dimension of the Sierpi\'nski Gasket Without Using Diffusion}
\author{Alexander Grigor'yan and Meng Yang}
\date{}

\maketitle

\abstract{We determine the walk dimension of the Sierpi\'nski gasket without using diffusion. We construct non-local regular Dirichlet forms on the Sierpi\'nski gasket from regular Dirichlet forms on the Sierpi\'nski graph whose suitable boundary is the Sierpi\'nski gasket.}

\footnote{\textsl{Date}: \today}
\footnote{\textsl{MSC2010}: Primary 28A80; Secondary 60J10, 60J50}
\footnote{\textsl{Keywords}: walk dimension, hyperbolic graph, hyperbolic boundary, Martin boundary, reflected Dirichlet space, trace form}
\footnote{The authors were supported by SFB 701 and SFB 1283 of the German Research Council (DFG).}

\section{Introduction}

It is well known that the Brownian motion in $\R^n$ is associated with the Dirichlet form
$$
\begin{cases}
\Ee(u,u)=\int_{\R^n}|\nabla u(x)|^2\md x,\\
\Dd[\Ee]=W^{1,2}(\R^n),
\end{cases}
$$
and the symmetric stable process in $\R^n$ of index $\beta$ is associated with the Dirichlet form
\begin{equation}\label{eqn_DFRn}
\begin{cases}
\Ee(u,u)=c_{n,\beta}\int_{\R^n}\int_{\R^n}\frac{(u(x)-u(y))^2}{|x-y|^{n+\beta}}\md x\md y,\\
\Dd[\Ee]=B^{\beta/2}_{2,2}(\R^n),
\end{cases}
\end{equation}
where $c_{n,\beta}>0$ is some normalizing constant. It is also known that $\beta$ can take arbitrary value in $(0,2)$. The symmetric stable process in $\R^n$ of index $\beta$ can be obtained from the Brownian motion in $\R^n$ by subordination technique, using the fact that the generator of the former is $(-\Delta)^{\beta/2}$ while the Laplace operator $-\Delta$ is the generator of the latter.

The main problem to be addressed in this paper is the range of the index of jump processes on more general spaces, notably, on fractals. We first define what we mean by index in a general setting.

Let $(M,d)$ be a locally compact separable metric space and $\mu$ be a Radon measure on $M$. Denote by $B(x,r)$ metric balls in $(M,d)$ and assume that $(M,d,\mu)$ is $\alpha$-regular in the sense that $\mu(B(x,r))\asymp r^\alpha$ for all $x\in M$ and $r\in(0,\mathrm{diam}(M))$. In particular, the Hausdorff dimension of $M$ is equal to $\alpha$ and the measure $\mu$ is equivalent to the Hausdorff measure of dimension $\alpha$ (see \cite{GHL03}).

Inspired by (\ref{eqn_DFRn}), consider the following quadratic form
\begin{equation}\label{eqn_E_K}
\begin{cases}
\Ee(u,u)=\int_M\int_M\frac{(u(x)-u(y))^2}{d(x,y)^{\alpha+\beta}}\mu(\md x)\mu(\md y),\\
\Ff=\myset{u\in L^2(M;\mu):\Ee(u,u)<+\infty},
\end{cases}
\end{equation}
where $\beta>0$ is so far arbitrary. By a general theory of Dirichlet form from \cite{FOT11}, in order for $(\Ee,\Ff)$ to be related to a jump process on $M$, $(\Ee,\Ff)$ has to be a \emph{regular Dirichlet form} on $L^2(M;\mu)$. In particular, $\Ff$ has to be dense in $L^2(M;\mu)$. In fact, it can happen that $\Ff=\myset{0}$ or $\Ff$ consists of constant functions (for example, if $M=\R^n$ and $\beta\ge2$, then $\Ff=\myset{0}$).

In all known examples, the range of $\beta$ for which $(\Ee,\Ff)$ is a regular Dirichlet form on $L^2(M;\mu)$ is an interval $(0,d_w)$ for some $d_w\in[0,+\infty]$. We refer to this value of $d_w$ as the \emph{walk dimension} of metric measure space $(M,d,\mu)$.

In fact, the walk dimension is an invariant of the metric space $(M,d)$. For example, the walk dimension of $\R^n$ is equal to 2 for all $n$. On most fractal spaces the walk dimension is strictly larger than 2. For example, on SG we have $d_w=\log5/\log2$.

To determine the walk dimension $d_w$, a common method is to use the diffusion on $M$ and its sub-Gaussian heat kernel estimate. Indeed, assume that a diffusion (corresponding to a local Dirichlet form) is constructed on $M$ and its heat kernel $p_t(x,y)$ (equivalently, the transition density) satisfies the following sub-Gaussian estimate
$$p_t(x,y)\asymp\frac{C}{t^{\frac{\alpha}{\gamma}}}\exp{\left(-c\left(\frac{d(x,y)^\gamma}{t}\right)^{\frac{1}{\gamma-1}}\right)}$$
at least for a bounded range of time $t$ and for all $x,y\in M$. Such estimates are known for many fractals, see for example, \cite{BB92,BP88,Kum93}. Here the parameter $\alpha$ is the Hausdorff dimension of $(M,d)$ as above and the parameter $\gamma$ is called the \emph{walk dimension} of the diffusion. For example, for Sierpi\'nski gasket (SG) we have $\gamma=\log5/\log2$ and for Sierpi\'nski carpet $\gamma\approx2.097$ (the exact value of $\gamma$ in this case is not known). Denote by $\mathcal{L}$ the positive definite generator of this diffusion. Then, for all $\delta\in(0,1)$, the operator $\mathcal{L}^\delta$ generates a jump process with a jump kernel
$$J(x,y)\asymp d(x,y)^{-(\alpha+\beta)},$$
where $\beta=\delta\gamma$ (see \cite{Kum03}). Hence, $\beta$ can take all values in $(0,\gamma)$. Using sub-Gaussian heat kernel estimate, one shows that for $\beta>\gamma$, $\Ff$ consists of constant functions see \cite{Pie00}. Hence the walk dimension $\gamma$ of the diffusion coincides with the walk dimension $d_w$ of the metric measure space. In particular, for SG we have $d_w={\log5}/{\log2}$.

The purpose of this paper is to provide an alternative method to determine this value of the walk dimension of SG \emph{without} using diffusion. We hope that this method will apply also to more general settings thus providing a direct way of determination of the range of the index.

Let $M=K$ be Sierpi\'nski gasket (SG) in $\R^2$ endowed with metric $d(x,y)=|x-y|$ and measure $\mu=\nu$ normalized Hausdorff measure on $K$. Let $(\Ee_K,\Ff_K)$ be given by Equation (\ref{eqn_E_K}) where $\alpha=\log3/\log2$ is Hausdorff dimension of SG and $\beta>0$ is some parameter.

Our approach is based on a recent paper \cite{KLW17} of S.-L. Kong, K.-S. Lau and T.-K. Wong. They introduced conductances with parameter $\lambda\in(0,1)$ on the Sierpi\'nski graph $X$ to obtain a random walk (and a corresponding energy form) on $X$ and showed that the Martin boundary of that random walk is homeomorphic to $K$. Let $\mybar{X}$ be the Martin compactification of $X$. It was also proved in \cite{KLW17} that the energy form on $X$ induces an energy form on $K\cong\mybar{X}\backslash X$ of the form (\ref{eqn_E_K}) with $\beta=-\log\lambda/\log2$. However, no restriction on $\beta$ was established, so that above energy form on $K$ does not have to be a regular Dirichlet form.

In this paper, we establish the exact restriction on $\lambda$ (hence on $\beta$) under which $(\Ee_K,\Ff_K)$ is a regular Dirichlet form on $L^2(K;\nu)$. Our method is as follows.

Firstly, we introduce a measure $m$ on $X$ to obtain a regular Dirichlet form $(\Ee_X,\Ff_X)$ on $L^2(X;m)$ associated with above random walk on $X$. Then we extend this Dirichlet form to an \emph{active reflected} Dirichlet form $(\Ee^\rref,\Ff^\rref_a)$ on $L^2(X;m)$ which is not regular, though.

Secondly, we \emph{regularize} $(\Ee^\rref,\Ff^\rref_a)$ on $L^2(X;m)$ using the theory of \cite{Fuk71}. The result of regularization is a regular Dirichlet form $(\Ee_{\mybar{X}},\Ff_{\mybar{X}})$ on $L^2(\mybar{X};m)$ that is an extension of $(\Ee_{{X}},\Ff_{{X}})$ on $L^2({X};m)$. By \cite{Fuk71}, regularization is always possible, but we show that the regularized form ``sits" on $\mybar{X}$ provided $\lambda>1/5$ which is equivalent $\beta<\beta^*:=\log5/\log2$.

Thirdly, we take trace of $\Ee_{\mybar{X}}$ to $K$ and obtain a regular Dirichlet form $(\Ee_K,\Ff_K)$ on $L^2(K;\nu)$ of the form (\ref{eqn_E_K}).

If $\beta>\beta^*$, then we show directly that $\Ff_K$ consists only of constant functions. Hence we conclude that $d_w=\beta^*=\log5/\log2$. This approach allows to detect the critical value $d_w$ of the index $\beta$ of the jump process without construction of diffusion.

So far this approach has been realized on SG but we plan to extend this method to a large family of fractals.

This paper is organized as follows. In section \ref{sec_SG}, we review basic constructions of Sierpi\'nski gasket $K$ and Sierpi\'nski graph $X$. In section \ref{sec_rw}, we give a transient reversible random walk $Z$ on $X$. In section \ref{sec_df_X}, we construct a regular Dirichlet form $\Ee_X$ on $X$ and its corresponding symmetric Hunt process $\myset{X_t}$. We prove that the Martin boundaries of $\myset{X_t}$ and $Z$ coincide. We show that $\Ee_X$ is stochastically incomplete and $\myset{X_t}$ goes to infinity in finite time almost surely. In section \ref{sec_ref}, we construct active reflected Dirichlet form $(\Ee^{\rref},\Ff^{\rref}_a)$ and show that $\Ff_X\subsetneqq\Ff^{\rref}_a$, hence $\Ee^{\rref}$ is not regular. In section \ref{sec_repre}, we construct a regular Dirichlet form $(\Ee_{\mybar{X}},\Ff_{\mybar{X}})$ on $L^2(\mybar{X};m)$ which is a regular representation of Dirichlet form $(\Ee^\rref,\Ff^\rref_a)$ on $L^2(X;m)$, where $\mybar{X}$ is the Martin compactification of $X$. In section \ref{sec_trace}, we take trace of the regular Dirichlet form $(\Ee_{\mybar{X}},\Ff_{\mybar{X}})$ on $L^2(\mybar{X};m)$ to $K$ to have a regular Dirichlet form $(\Ee_K,\Ff_K)$ on $L^2(K;\nu)$ with the form (\ref{eqn_E_K}). In section \ref{sec_trivial}, we show that $\Ff_K$ consists of constant functions if $\lambda\in(0,1/5)$ or $\beta\in(\beta^*,+\infty)$. Hence $d_w=\beta^*=\log5/\log2$.

\section{Sierpi\'nski Gasket and Sierpi\'nski Graph}\label{sec_SG}

In this section, we review basic constructions of Sierpi\'nski gasket (SG) and Sierpi\'nski graph. SG can be defined in many ways. We give related ones. Let $p_0=(0,0),p_1=(1,0),p_2=(\frac{1}{2},\frac{\sqrt{3}}{2})$, $f_i(x)=(x+p_i)/2$, $x\in\R^2$, $i=0,1,2$. Then SG is the unique nonempty compact set $K$ satisfying $K=f_0(K)\cup f_1(K)\cup f_2(K)$. Let $V_1=\myset{p_0,p_1,p_2}$, $V_{n+1}=f_0(V_n)\cup f_1(V_n)\cup f_2(V_n)$ for all $n\ge1$, then $\myset{V_n}$ is an increasing sequence of finite sets such that $K$ is the closure of $\cup_{n=1}^\infty V_n$. See Figure \ref{figure_gasket}.

\begin{figure}[ht]
  \centering
  \includegraphics[width=0.5\textwidth]{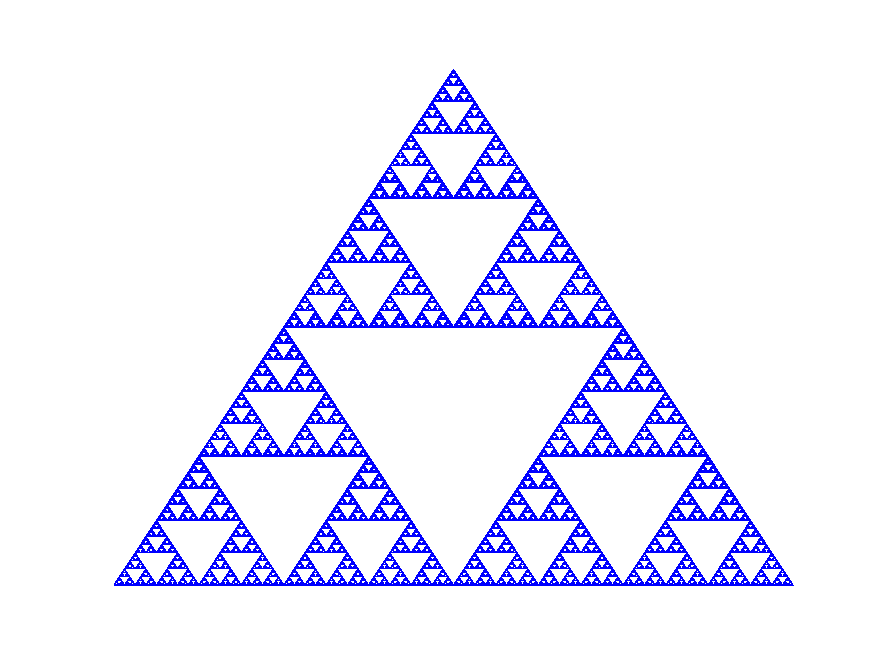}
  \caption{Sierpi\'nski gasket}\label{figure_gasket}
\end{figure}

Let $W_0=\myset{\emptyset}$, $W_n=\myset{w=w_1\ldots w_n:w_i=0,1,2,i=1,\ldots,n}$ for all $n\ge1$ and $W=\cup_{n=0}^\infty W_n$. An element $w=w_1\ldots w_n\in W_n$ is called a finite word with length $n$ and we denote $|w|=n$ for all $n\ge1$. $\emptyset\in W_0$ is called empty word and we denote its length $|\emptyset|=0$, we use the convention that zero length word is empty word. An element in $W$ is called a finite word. Let $W_\infty=\myset{w=w_1w_2\ldots:w_i=0,1,2,i=1,2,\ldots}$ be the set of all infinite sequences with elements in $\myset{0,1,2}$, then an element $w\in W_\infty$ is called an infinite word. For all $w=w_1\ldots w_n\in W$ with $n\ge1$, we write $f_w=f_{w_1}\circ\ldots\circ f_{w_n}$ and we write $f_{\emptyset}=\mathrm{id}$. It is obvious that $K_w=f_w(K)$ is a compact set for all $w\in W$. For all $w=w_1w_2\ldots\in W_\infty$, we write $K_w=\cap_{n=0}^\infty K_{w_1\ldots w_n}$, since $K_{w_1\ldots w_{n+1}}\subseteq K_{w_1\ldots w_n}$ for all $n\ge0$ and $\mathrm{diam}(K_{w_1\ldots w_n})\to0$ as $n\to+\infty$, we have $K_w\subseteq K$ is a one-point set. On the other hand, for all $x\in K$, there exists $w\in W_\infty$ such that $\myset{x}=K_w$. But this $w$ in not unique. For example, for the midpoint $x$ of the segment connecting $p_0$ and $p_1$, we have $\myset{x}=K_{100\ldots}=K_{011\ldots}$, where $100\ldots$ is the element $w=w_1w_2\ldots\in W_\infty$ with $w_1=1,w_n=0$ for all $n\ge2$ and $011\ldots$ has similar meaning.

By representation of infinite words, we can construct Sierpi\'nski graph. First, we construct a triple tree. Take the root $o$ as the empty word $\emptyset$. It has three child nodes, that is, the words in $W_1$, $0,1,2$. Then the nodes $0,1,2$ have child nodes, that is, the words in $W_2$, $0$ has child nodes $00,01,02$, $1$ has child nodes $10,11,12$, $2$ has child nodes $20,21,22$. In general, each node $w_1\ldots w_n$ has three child nodes in $W_{n+1}$, that is, $w_1\ldots w_n0,w_1\ldots w_n1,w_1\ldots w_n2$ for all $n\ge1$. We use node and finite word interchangeable hereafter. For all $n\ge1$ and node $w=w_1\ldots w_n$, the node $w_1\ldots w_{n-1}$ is called the father node of $w$ and denoted by $w^-$. We obtain vertex set $V$ consisting of all nodes. Next, we construct edge set $E$, a subset of $V\times V$. Let
$$
\begin{cases}
E_v&=\myset{(w,w^-),(w^-,w):w\in W_n,n\ge1},\\
E_h&=\myset{(w_1,w_2):w_1,w_2\in W_n,w_1\ne w_2,K_{w_1}\cap K_{w_2}\ne\emptyset,n\ge1},
\end{cases}
$$
and $E=E_v\cup E_h$. $E_v$ is the set of all vertical edges and $E_h$ is the set of all horizontal edges. Then $X=(V,E)$ is Sierpi\'nski graph, see Figure \ref{figure_graph}. We write $X$ for simplicity.



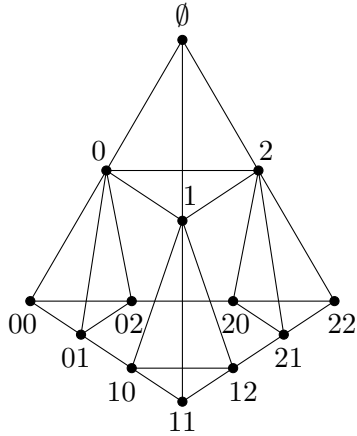
\begin{figure}[ht]
\centering
\begin{tikzpicture}
\draw (0,0)--(4,0);
\draw (0,0)--(2,3.4641016151);
\draw (4,0)--(2,3.4641016151);

\draw (2,3.4641016151)--(2,-1.3333333333);
\draw (4,0)--(2,-1.3333333333);
\draw (0,0)--(2,-1.3333333333);

\draw (1,1.7320508076)--(3,1.7320508076);
\draw (2,1.7320508076-0.6666666667)--(3,1.7320508076);
\draw (1,1.7320508076)--(2,1.7320508076-0.6666666667);

\draw (1,1.7320508076)--(1.3333333333,0);
\draw (1,1.7320508076)--(0.6666666666,-0.4444444444);
\draw (0.6666666666,-0.4444444444)--(1.3333333333,0);

\draw (2,1.7320508076-0.6666666667)--(1.3333333333,-0.8888888888);
\draw (2,1.7320508076-0.6666666667)--(2.6666666666,-0.8888888888);
\draw (2.6666666666,-0.8888888888)--(1.3333333333,-0.8888888888);

\draw (3,1.7320508076)--(3.3333333333,-0.4444444444);
\draw (3,1.7320508076)--(2.6666666666,0);
\draw (2.6666666666,0)--(3.3333333333,-0.4444444444);

\draw[fill=black] (0,0) circle (0.06);
\draw[fill=black] (4,0) circle (0.06);
\draw[fill=black] (2,3.4641016151) circle (0.06);
\draw[fill=black] (2,-1.3333333333) circle (0.06);
\draw[fill=black] (1,1.7320508076) circle (0.06);
\draw[fill=black] (2,1.7320508076-0.6666666667) circle (0.06);
\draw[fill=black] (3,1.7320508076) circle (0.06);
\draw[fill=black] (1.3333333333,0) circle (0.06);
\draw[fill=black] (0.6666666666,-0.4444444444) circle (0.06);
\draw[fill=black] (1.3333333333,-0.8888888888) circle (0.06);
\draw[fill=black] (2.6666666666,-0.8888888888) circle (0.06);
\draw[fill=black] (3.3333333333,-0.4444444444) circle (0.06);
\draw[fill=black] (2.6666666666,0) circle (0.06);

\draw (2,3.8) node {$\emptyset$};
\draw (0.9,2) node {$0$};
\draw (2.1,1.4) node {$1$};
\draw (3.1,2) node {$2$};
\draw (-0.1,-0.3) node {$00$};
\draw (4.1,-0.3) node {$22$};
\draw (1.3,-0.3) node {$02$};
\draw (2.7,-0.3) node {$20$};
\draw (0.6,-0.75) node {$01$};
\draw (3.4,-0.75) node {$21$};
\draw (1.2,-1.2) node {$10$};
\draw (2.8,-1.2) node {$12$};
\draw (2,-1.6) node {$11$};
\end{tikzpicture}
\caption{Sierpi\'nski graph}\label{figure_graph}
\end{figure}


For all $x,y\in V$, if $(x,y)\in E$, then we write $x\sim y$ and say that $y$ is a neighbor of $x$. It is obvious that $\sim$ is an equivalence relation. A path in $X$ is a finite sequence $\pi=[x_0,\ldots,x_n]$ with distinct nodes and $x_0\sim x_1,\ldots,x_{n-1}\sim x_n$, $n$ is called the length of the path. For all $x,y\in V$, let $d(x,y)$ be the graph metric, that is, the minimum length of all paths connecting $x$ and $y$, if a path connecting $x$ and $y$ has length $d(x,y)$, then this path is called geodesic. Hereafter, we write $x\in X$ to mean that $x\in V$. It is obvious that $X$ is a connected and locally finite graph, that is, for all $x,y\in X$ with $x\ne y$, there exists a path connecting $x$ and $y$, for all $x\in X$, the set of its neighbors $\myset{y\in X:x\sim y}$ is a finite set. We write $S_n=\myset{x\in X:|x|=n}$, $B_n=\cup_{i=0}^nS_i$ as sphere and closed ball with radius $n$.

Roughly speaking, for all $n\ge1$, $S_n$ looks like some disconnected triangles, see Figure \ref{figure_S3} for $S_3$, and $V_n$ looks like some connected triangles, see Figure \ref{figure_V3} for $V_3$. We define a mapping $\Phi_n:S_n\to V_n$ as follows. For all $n\ge2$, $w=w_1\ldots w_n\in W_n$, write $p_{w}=p_{w_1\ldots w_n}=f_{w_1\ldots w_{n-1}}(p_{w_n})$. Write $p_1,p_2,p_3$ for $n=1$ and $w=0,1,2$, respectively. By induction, we have $V_n=\cup_{w\in W_n}p_w$ for all $n\ge1$. Define $\Phi_n(w)=p_w$. Then $\Phi_n$ is onto and many pairs of points are mapped into same points, such as $\Phi_3(001)=\Phi_3(010)$. This property can divide the edges in $S_n$ into two types. For an arbitrary edge in $S_n$ with end nodes $x,y$, it is called of type \Rmnum{1} if $\Phi_n(x)\ne\Phi_n(y)$ such as the edge in $S_3$ with end nodes $000$ and $001$, it is called of type \Rmnum{2} if $\Phi_n(x)=\Phi_n(y)$ such as the edge in $S_3$ with end nodes $001$ and $010$. By induction, it is obvious there exist only these two types of edges on each sphere $S_n$.




\begin{figure}[ht]
  \centering
  \begin{tikzpicture}[scale=0.7]
  \tikzstyle{every node}=[font=\small,scale=0.7]
  \draw (0,0)--(9,0);
  \draw (0,0)--(9/2,9/2*1.7320508076);
  \draw (9,0)--(9/2,9/2*1.7320508076);
  
  \draw (3,0)--(3/2,3/2*1.7320508076);
  \draw (6,0)--(15/2,3/2*1.7320508076);
  \draw (3,3*1.7320508076)--(6,3*1.7320508076);
  
  \draw (4,4*1.7320508076)--(5,4*1.7320508076);
  \draw (7/2,7/2*1.7320508076)--(4,3*1.7320508076);
  \draw (11/2,7/2*1.7320508076)--(5,3*1.7320508076);
  
  \draw (1,1*1.7320508076)--(2,1.7320508076);
  \draw (1/2,1/2*1.7320508076)--(1,0);
  \draw (2,0)--(5/2,1/2*1.7320508076);
  
  \draw (7,1.7320508076)--(8,1.7320508076);
  \draw (13/2,1.7320508076/2)--(7,0);
  \draw (17/2,1.7320508076/2)--(8,0);
  
  \draw[fill=black] (0,0) circle (0.06);
  \draw[fill=black] (1,0) circle (0.06);
  \draw[fill=black] (2,0) circle (0.06);
  \draw[fill=black] (3,0) circle (0.06);
  \draw[fill=black] (6,0) circle (0.06);
  \draw[fill=black] (7,0) circle (0.06);
  \draw[fill=black] (8,0) circle (0.06);
  \draw[fill=black] (9,0) circle (0.06);
  \draw[fill=black] (1/2,1.7320508076/2) circle (0.06);
  \draw[fill=black] (5/2,1.7320508076/2) circle (0.06);
  \draw[fill=black] (13/2,1.7320508076/2) circle (0.06);
  \draw[fill=black] (17/2,1.7320508076/2) circle (0.06);
  \draw[fill=black] (1,1.7320508076) circle (0.06);
  \draw[fill=black] (2,1.7320508076) circle (0.06);
  \draw[fill=black] (7,1.7320508076) circle (0.06);
  \draw[fill=black] (8,1.7320508076) circle (0.06);
  \draw[fill=black] (3/2,3/2*1.7320508076) circle (0.06);
  \draw[fill=black] (15/2,3/2*1.7320508076) circle (0.06);
  \draw[fill=black] (3,3*1.7320508076) circle (0.06);
  \draw[fill=black] (4,3*1.7320508076) circle (0.06);
  \draw[fill=black] (5,3*1.7320508076) circle (0.06);
  \draw[fill=black] (6,3*1.7320508076) circle (0.06);
  \draw[fill=black] (7/2,7/2*1.7320508076) circle (0.06);
  \draw[fill=black] (11/2,7/2*1.7320508076) circle (0.06);
  \draw[fill=black] (4,4*1.7320508076) circle (0.06);
  \draw[fill=black] (5,4*1.7320508076) circle (0.06);
  \draw[fill=black] (9/2,9/2*1.7320508076) circle (0.06);
  
  \draw (0,-0.3) node {$000$};
  \draw (1,-0.3) node {$001$};
  \draw (2,-0.3) node {$010$};
  \draw (3,-0.3) node {$011$};
  \draw (6,-0.3) node {$100$};
  \draw (7,-0.3) node {$101$};
  \draw (8,-0.3) node {$110$};
  \draw (9,-0.3) node {$111$};
  
  \draw (0.1,1/2*1.7320508076) node {$002$};
  \draw (2.9,1/2*1.7320508076) node {$012$};
  \draw (6.1,1/2*1.7320508076) node {$102$};
  \draw (8.9,1/2*1.7320508076) node {$112$};
  
  \draw (0.6,1*1.7320508076) node {$020$};
  \draw (2.4,1*1.7320508076) node {$021$};
  \draw (6.6,1*1.7320508076) node {$120$};
  \draw (8.4,1*1.7320508076) node {$121$};
  
  \draw (1.1,3/2*1.7320508076) node {$022$};
  \draw (7.9,3/2*1.7320508076) node {$122$};
  
  \draw (2.6,3*1.7320508076) node {$200$};
  \draw (4,3*1.7320508076-0.3) node {$201$};
  \draw (5,3*1.7320508076-0.3) node {$210$};
  \draw (6.4,3*1.7320508076) node {$211$};
  
  \draw (3.1,7/2*1.7320508076) node {$202$};
  \draw (5.9,7/2*1.7320508076) node {$212$};
  
  \draw (3.6,4*1.7320508076) node {$220$};
  \draw (5.4,4*1.7320508076) node {$221$};
  
  \draw (4.5,9/2*1.7320508076+0.3) node {$222$};
  
  \end{tikzpicture}
  \caption{$S_3$}\label{figure_S3}
\end{figure}






\begin{figure}[ht]
  \centering
  \begin{tikzpicture}[scale=1.125*0.7]
  \tikzstyle{every node}=[font=\small,scale=0.7]
  \draw (0,0)--(8,0);
  \draw (0,0)--(4,4*1.7320508076);
  \draw (8,0)--(4,4*1.7320508076);
  
  \draw (4,0)--(2,2*1.7320508076);
  \draw (2,2*1.7320508076)--(6,2*1.7320508076);
  \draw (6,2*1.7320508076)--(4,0);
  
  \draw (2,0)--(3,1*1.7320508076);
  \draw (3,1*1.7320508076)--(1,1*1.7320508076);
  \draw (1,1*1.7320508076)--(2,0);
  
  \draw (6,0)--(5,1*1.7320508076);
  \draw (5,1*1.7320508076)--(7,1*1.7320508076);
  \draw (7,1*1.7320508076)--(6,0);
  
  \draw (4,2*1.7320508076)--(3,3*1.7320508076);
  \draw (3,3*1.7320508076)--(5,3*1.7320508076);
  \draw (5,3*1.7320508076)--(4,2*1.7320508076);
  
  \draw[fill=black] (0,0) circle (0.06);
  \draw[fill=black] (2,0) circle (0.06);
  \draw[fill=black] (4,0) circle (0.06);
  \draw[fill=black] (6,0) circle (0.06);
  \draw[fill=black] (8,0) circle (0.06);
  \draw[fill=black] (1,1.7320508076) circle (0.06);
  \draw[fill=black] (3,1.7320508076) circle (0.06);
  \draw[fill=black] (5,1.7320508076) circle (0.06);
  \draw[fill=black] (7,1.7320508076) circle (0.06);
  \draw[fill=black] (2,2*1.7320508076) circle (0.06);
  \draw[fill=black] (4,2*1.7320508076) circle (0.06);
  \draw[fill=black] (6,2*1.7320508076) circle (0.06);
  \draw[fill=black] (3,3*1.7320508076) circle (0.06);
  \draw[fill=black] (5,3*1.7320508076) circle (0.06);
  \draw[fill=black] (4,4*1.7320508076) circle (0.06);
  
  \draw (0,-0.3) node {$p_{000}$};
  \draw (2,-0.3) node {$p_{001}=p_{010}$};
  \draw (4,-0.3) node {$p_{011}=p_{100}$};
  \draw (6,-0.3) node {$p_{101}=p_{110}$};
  \draw (8,-0.3) node {$p_{111}$};
  \draw (4,4*1.7320508076+0.3) node {$p_{222}$};
  
  \draw (0,1*1.7320508076) node {$p_{002}=p_{020}$};
  \draw (8,1*1.7320508076) node {$p_{112}=p_{121}$};
  \draw (2.95,1*1.7320508076+0.3) node {$p_{012}=p_{021}$};
  \draw (5.05,1*1.7320508076+0.3) node {$p_{102}=p_{120}$};
  
  \draw (1,2*1.7320508076) node {$p_{022}=p_{200}$};
  \draw (7,2*1.7320508076) node {$p_{122}=p_{211}$};
  
  \draw (4,2*1.7320508076-0.3) node {$p_{201}=p_{210}$};
  
  \draw (2,3*1.7320508076) node {$p_{202}=p_{220}$};
  \draw (6,3*1.7320508076) node {$p_{212}=p_{221}$};
  \end{tikzpicture}
  \caption{$V_3$}\label{figure_V3}
\end{figure}



Sierpi\'nski graph is a hyperbolic graph, see \cite[Theorem 3.2]{LW09}. For arbitrary graph $X$, choose a node $o$ as root, define graph metric $d$ as above, write $|x|=d(o,x)$. For all $x,y\in X$, define Gromov product
$$|x\wedge y|=\frac{1}{2}(|x|+|y|-d(x,y)).$$
$X$ is called a hyperbolic graph if there exists $\delta>0$ such that for all $x,y,z\in X$, we have
$$|x\wedge y|\ge\mathrm{min}{\myset{|x\wedge z|,|z\wedge y|}}-\delta.$$
It is known that the definition is independent of the choice of root $o$. For a hyperbolic graph, we can introduce a metric as follows. Choose $a>0$ such that $a'=e^{3\delta a}-1<\sqrt{2}-1$. For all $x,y\in X$, define
$$
\rho_a(x,y)=
\begin{cases}
\exp{(-a|x\wedge y|)},&\text{if }x\ne y,\\
0,&\text{if }x=y,
\end{cases}
$$
then $\rho_a$ satisfies
$$\rho_a(x,y)\le(1+a')\max{\myset{\rho_a(x,z),\rho_a(z,y)}}\text{ for all }x,y,z\in X.$$
This means $\rho_a$ is an ultra-metric not a metric. But we can define
$$\theta_a(x,y)=\inf{\myset{\sum_{i=1}^n\rho_a(x_{i-1},x_i):x=x_0,\ldots,x_n=y,x_i\in X,i=0,\ldots,n,n\ge1}},$$
for all $x,y\in X$. $\theta_a$ is a metric and equivalent to $\rho_a$. So we use $\rho_a$ rather than $\theta_a$ for simplicity. It is known that a sequence $\myset{x_n}\subseteq X$ with $|x_n|\to+\infty$ is a Cauchy sequence in $\rho_a$ if and only if $|x_m\wedge x_n|\to+\infty$ as $m,n\to+\infty$. Let $\hat{X}$ be the completion of $X$ with respect to $\rho_a$, then $\dd_hX=\hat{X}\backslash X$ is called the hyperbolic boundary of $X$. By \cite[Corollary 22.13]{Wo00}, $\hat{X}$ is compact. It is obvious that hyperbolicity is only related to the graph structure of $X$. We introduce a description of hyperbolic boundary in terms of geodesic rays. A geodesic ray is a sequence $[x_0,x_1,\ldots]$ with distinct nodes, $x_n\sim x_{n+1}$ and path $[x_0,\ldots,x_n]$ is geodesic for all $n\ge0$. Two geodesic rays $\pi=[x_0,x_1,\ldots]$ and $\pi'=[y_0,y_1,\ldots]$ are called equivalent if $\varliminf_{n\to+\infty}d(y_n,\pi)<+\infty$, where $d(x,\pi)=\inf_{n\ge0}d(x,x_n)$. There exists a one-to-one correspondence between the family of all equivalent geodesic rays and hyperbolic boundary as follows.

By \cite[Proposition 22.12(b)]{Wo00}, equivalence geodesic rays is an equivalence relation. By \cite[Lemma 22.11]{Wo00}, for all geodesic ray $\pi=[x_0,x_1,\ldots]$, for all $u\in X$, there exist $k,l\ge0$, $u=u_0,\ldots,u_k=x_l$, such that, $[u,u_1,\ldots,u_k,x_{l+1},x_{l+2},\ldots]$ is a geodesic ray. It is obvious that this new geodesic ray is equivalent to $\pi$, hence we can take a geodesic ray in each equivalence class of the form $\pi=[x_0,x_1,\ldots]$, $|x_n|=n$, $x_n\sim x_{n+1}$ for all $n\ge0$. By \cite[Proposition 22.12(c)]{Wo00}, we can define a one-to-one mapping $\tau$ from the family of all equivalent geodesic rays to hyperbolic boundary, $\tau:[x_0,x_1,\ldots]\mapsto\text{the limit }\xi\text{ of }\myset{x_n}\text{ in }\rho_a$. By above, we can choose $[x_0,x_1,\ldots]$ of the form $|x_n|=n$, $x_n\sim x_{n+1}$ for all $n\ge0$, we say that $[x_0,x_1,\ldots]$ is a geodesic ray from $o$ to $\xi$.

For $y\in\hat{X}$, $x\in X$, we say that $y$ is in the subtree with root $x$ if $x$ lies on the geodesic path or some geodesic ray from $o$ to $y$. And if $y$ is in the subtree with root $x$, then it is obvious that $|x\wedge y|=|x|$, $\rho_a(x,y)=e^{-a|x|}$ if $x\ne y$. For more detailed discussion of hyperbolic graph, see \cite[Chapter \Rmnum{4}, \Rmnum{4}.22]{Wo00}.

\cite[Theorem 3.2, Theorem 4.3, Proposition 4.4]{LW09} showed that for a general class of fractals satisfying open set condition (OSC), we can construct an augmented rooted tree which is a hyperbolic graph and the hyperbolic boundary is H\"older equivalent to the fractal through canonical mapping. In particular, SG satisfies OSC, Sierpi\'nski graph is an augmented rooted tree hence hyperbolic. The canonical mapping $\Phi$ can be described as follows.

For all $\xi\in\dd_hX$, there corresponds a geodesic ray in the equivalence class corresponding to $\xi$ through the mapping $\tau$ of the form $[x_0,x_1,\ldots]$ with $|x_n|=n$ and $x_n\sim x_{n+1}$ for all $n\ge0$, then there exists an element $w\in W_\infty$ such that $w_1\ldots w_n=x_n$ for all $n\ge1$. Then $\myset{\Phi(\xi)}=K_w$ and
\begin{equation}\label{eqn_Holder}
\lvert\Phi(\xi)-\Phi(\eta)\rvert\asymp\rho_a(\xi,\eta)^{\log2/a}\text{ for all }\xi,\eta\in\dd_hX.
\end{equation}

\section{Random Walk on $X$}\label{sec_rw}

In this section, we give a transient reversible random walk on $X$ from \cite{KLW17}. Let $c:X\times X\to[0,+\infty)$ be conductance satisfying
$$
\begin{aligned}
c(x,y)&=c(y,x),\\
\pi(x)&=\sum_{y\in X}c(x,y)\in(0,+\infty),\\
c(x,y)&>0\text{ if and only if }x\sim y,
\end{aligned}
$$
for all $x,y\in X$. Let $P(x,y)=c(x,y)/\pi(x)$, $x,y\in X$, then $P$ is a transition probability satisfying $\pi(x)P(x,y)=\pi(y)P(y,x)$ for all $x,y\in X$. We construct a reversible random walk $Z=\myset{Z_n}$ on $X$ with transition probability $P$. We introduce some related quantities. Let $P^{(0)}(x,y)=\delta_{xy}$, $P^{(n+1)}(x,y)=\sum_{z\in X}P(x,z)P^{(n)}(z,y)$ for all $x,y\in X$, $n\ge0$. Define
$$G(x,y)=\sum_{n=0}^\infty P^{(n)}(x,y),x,y\in X,$$
then $G$ is the Green function of $Z$ and $Z$ is called transient if $G(x,y)<+\infty$ for all or equivalent for some $x,y\in X$. Define
$$F(x,y)=\PP_x\left[Z_n=y\text{ for some }n\ge0\right],$$
that is, the probability of ever reaching $y$ starting from $x$. By Markov property, we have
$$G(x,y)=F(x,y)G(y,y).$$
For more detailed discussion of general theory of random walk, see \cite[Chapter \Rmnum{1}, \Rmnum{1}.1]{Wo00}.

Here, we take some specific random walk called $\lambda$-return ratio random walk introduced in \cite{KLW17}, that is,
$$\frac{c(x,x^-)}{\sum_{y:y^-=x}c(x,y)}=\frac{P(x,x^-)}{\sum_{y:y^-=x}P(x,y)}=\lambda\in(0,+\infty)\text{ for all }x\in X\text{ with }|x|\ge1.$$
For all $n\ge0$, $x\in S_n,y\in S_{n+1}$, we take $c(x,y)$ the same value denoted by $c(n,n+1)=c(n+1,n)$. Then
$$\lambda=\frac{c(n-1,n)}{3c(n,n+1)},$$
that is,
$$c(n,n+1)=\frac{c(n-1,n)}{3\lambda}=\ldots=\frac{1}{(3\lambda)^n}{c(0,1)}.$$
Take $c(0,1)=1$, then $c(n,n+1)=1/(3\lambda)^n$. Moreover, \cite[Definition 4.4]{KLW17} gave restrictions to conductance of horizontal edges. For all $n\ge1$, $x,y\in S_n$, $x\sim y$, let
$$
c(x,y)=\\
\begin{cases}
\frac{C_1}{(3\lambda)^n},&\text{ if the edge with end nodes }x,y\text{ is of type \Rmnum{1}},\\
\frac{C_2}{(3\lambda)^n},&\text{ if the edge with end nodes }x,y\text{ is of type \Rmnum{2}},
\end{cases}
$$
where $C_1,C_2$ are some positive constants.

\cite[Proposition 4.1, Lemma 4.2]{KLW17} showed that if $\lambda\in(0,1)$, then $Z$ is transient and
\begin{equation}\label{eqn_G}
G(o,o)=\frac{1}{1-\lambda},
\end{equation}
\begin{equation}\label{eqn_F}
F(x,0)=\lambda^{|x|}\text{ for all }x\in X.
\end{equation}

For a transient random walk, we can introduce Martin kernel given by
$$K(x,y)=\frac{G(x,y)}{G(o,y)},$$
and Martin compactification $\mybar{X}$, that is, the smallest compactification such that $K(x,\cdot)$ can be extended continuously for all $x\in X$. Martin boundary is given by $\dd_MX=\mybar{X}\backslash X$. Then Martin kernel $K$ can be defined on $X\times\mybar{X}$.

\cite[Theorem 5.1]{KLW17} showed that the Martin boundary $\dd_MX$, the hyperbolic boundary $\dd_hX$ and SG $K$ are homeomorphic. Hence the completion $\hat{X}$ of $X$ with respect to $\rho_a$ and Martin compactification $\mybar{X}$ are homeomorphic. It is always convenient to consider $\hat{X}$ rather than $\mybar{X}$. We use $\dd X$ to denote all these boundaries. We list some general results of Martin boundary for later use.

\begin{mythm}\label{thm_conv}(\cite[Theorem 24.10]{Wo00})
Let $Z$ be transient, then $\myset{Z_n}$ converges to a $\dd_MX$-valued random variable $Z_\infty$, $\PP_x$-a.s. for all $x\in X$. The hitting distribution of $\myset{Z_n}$ or the distribution of $Z_\infty$ under $\PP_x$, denoted by $\nu_x$, satisfies
$$\nu_x(B)=\int_BK(x,\cdot)\md\nu_o\text{ for all Borel measurable set }B\subseteq\dd_M X,$$
that is, $\nu_x$ is absolutely continuous with respect to $\nu_o$ with Radon-Nikodym derivative $K(x,\cdot)$. 
\end{mythm}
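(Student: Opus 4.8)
The statement is the classical Martin representation theorem for transient chains, so the plan is to reconstruct its two independent assertions separately: (i) the almost sure convergence $Z_n\to Z_\infty\in\dd_M X$, and (ii) the identity $\md\nu_x/\md\nu_o=K(x,\cdot)$. Throughout I would exploit the exact identity $K(x,y)=G(x,y)/G(o,y)=F(x,y)/F(o,y)$ (immediate from $G(x,y)=F(x,y)G(y,y)$), which reduces all Green-function ratios to ratios of hitting probabilities, and the fact that $\mybar X$ is a compact metrizable space in which $X$ is open and dense and the countable family $\myset{K(x,\cdot):x\in X}$ separates the points of $\dd_M X$ by construction.

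For (i), I would first use transience to locate the accumulation points. Since $\EE_x[\#\myset{n:Z_n=y}]=G(x,y)<+\infty$, every vertex is visited finitely often $\PP_x$-a.s., and local finiteness then forces $Z_n$ to leave every ball $B_N$; hence all subsequential limits of $\myset{Z_n}$ in the compact space $\mybar X$ lie in $\dd_M X$. The real work is to show the limit is a.s. unique. Here I would pass to the hyperbolic metric $\rho_a$ and use that $\myset{Z_n}$ converges iff it is $\rho_a$-Cauchy iff $|Z_m\wedge Z_n|\to+\infty$, i.e. iff the confluents of the geodesics from $o$ to the $Z_n$ stabilize. To prove stabilization I would control downward excursions: by (\ref{eqn_F}) the probability of returning to a given level from level $k$ above it decays like $\lambda^{k}$, so transience together with a Borel--Cantelli argument shows that after finitely many steps the walk never again descends below the confluent it has reached, whence $|Z_m\wedge Z_n|\to+\infty$.

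For (ii) I would describe $\nu_x$ through the shadows $\widehat v\subseteq\dd_M X$ of vertices $v$ (the boundary points whose geodesic ray from $o$ passes through $v$); these generate the Borel $\sigma$-algebra and shrink to points as $|v|\to+\infty$. The plan is to show $\nu_x(\widehat v)/\nu_o(\widehat v)\to K(x,\xi)$ whenever $\widehat v\downarrow\myset{\xi}$. Applying the strong Markov property at the first hitting time of $v$ factorises $\nu_x(\widehat v)=F(x,v)\,\PP_v[Z_\infty\in\widehat v]$, where the escape factor $\PP_v[Z_\infty\in\widehat v]$ does not depend on the starting point; the same holds with $o$ in place of $x$, so the escape factors cancel and the ratio equals $F(x,v)/F(o,v)=K(x,v)\to K(x,\xi)$. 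A differentiation (martingale) argument along the filtration generated by the shadows then upgrades this pointwise ratio limit to the Radon--Nikodym statement $\nu_x=K(x,\cdot)\,\nu_o$; absolute continuity is automatic since $\nu_o(\widehat v)=0$ implies $\nu_x(\widehat v)=0$.

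The main obstacle is the a.s. uniqueness of the limit in (i): unlike $G(Z_n,y)$, which is a nonnegative supermartingale under $\PP_o$ (because $G(\cdot,y)$ is superharmonic) and therefore trivially convergent, the Martin kernel $K(x,Z_n)$ is neither monotone nor a (super)martingale in its second variable, so convergence cannot be read off from martingale theory alone and genuinely requires the geometric and probabilistic input above. A secondary technical point is that in the augmented (non-tree) Sierpi\'nski graph reaching $\widehat v$ does not strictly force passage through $v$; I would absorb this into the error term using that horizontal edges live within a single sphere and that the return estimate (\ref{eqn_F}) controls the finitely many vertices of $S_{|v|}$ through which the walk could alternatively enter $\widehat v$.
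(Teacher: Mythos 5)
You should first be aware that the paper contains no proof of this statement: it is imported verbatim as \cite[Theorem 24.10]{Wo00}, a general theorem valid for every transient irreducible chain, whose proof in that reference is potential-theoretic (supermartingale convergence of $u(Z_n)$ for superharmonic $u$, Riesz decomposition, last-exit/reversed-chain arguments) and uses neither hyperbolicity nor any specific feature of $X$. Your plan instead builds a proof tailored to the $\lambda$-return walk on the Sierpi\'nski graph. That is legitimate in principle, since the theorem is only invoked for this walk, but as written it has two genuine gaps.

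In part (i), controlling downward excursions does not force $|Z_m\wedge Z_n|\to+\infty$. In the augmented tree two vertices $x,y\in S_n$ can satisfy $d(x,y)\approx 2n$ and hence $|x\wedge y|\approx 0$ even though both have level $n$: the walk can change its level-$n$ cell either by descending below level $n$ or by a horizontal edge at some level $m\ge n$ crossing a level-$n$ cell boundary, and your Borel--Cantelli argument excludes only the first mechanism. One must separately show that, for each fixed $n$, only finitely many boundary-crossing horizontal steps occur; this is true (the vertices of $S_m$ adjacent to a level-$n$ cell boundary number $O_n(1)$ uniformly in $m$ and carry Green mass $G(o,y)\asymp 3^{-m}$, so the expected total number of visits to them over all $m\ge n$ is finite), but that estimate is absent from your sketch. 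Note also that this route yields convergence in the \emph{hyperbolic} compactification and needs the homeomorphism $\dd_MX\cong\dd_hX$ from \cite[Theorem 5.1]{KLW16} to produce the Martin-boundary statement. In part (ii), the factorization $\nu_x(\widehat v)=F(x,v)\,\PP_v[Z_\infty\in\widehat v]$ is false on the augmented graph: the walk can enter the shadow of $v$ and converge to a point of $K_v$ without ever visiting $v$, for instance by approaching a point of $K_v\cap K_{\tilde v}$ entirely through the neighboring cell $\tilde v$. This is not a secondary point to be absorbed into an error term; it destroys the exact cancellation of the escape factors on which your computation of $\nu_x(\widehat v)/\nu_o(\widehat v)$ rests, and the corrected ratio involves hitting probabilities of the whole finite set of possible entry vertices in $S_{|v|}$, for which you give no argument that the limit is still $K(x,\xi)$. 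Until these two points are supplied, the proposal is a plausible programme rather than a proof.
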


For all $\nu_o$-integrable function $\vphi$ on $\dd_MX$, we have
$$h(x)=\int_{\dd_MX}\vphi\md\nu_x=\int_{\dd_MX}K(x,\cdot)\vphi\md\nu_o,x\in X,$$
is a harmonic function on $X$. It is called the Poisson integral of $\vphi$, denoted by $H\vphi$.

\cite[Theorem 5.6]{KLW17} showed that the hitting distribution $\nu_o$ is normalized Hausdorff measure on $K$. We write $\nu$ for $\nu_o$ for simplicity.

Using conductance $c$, we construct an energy on $X$ given by
$$\Ee_X(u,u)=\frac{1}{2}\sum_{x,y\in X}c(x,y)(u(x)-u(y))^2.$$

In \cite{Sil74}, Silverstein constructed Na\"im kernel $\Theta$ on $\mybar{X}\times\mybar{X}$ using Martin kernel to induce an energy on $\dd X$ given by
$$\Ee_{\dd X}(u,u)=\Ee_X(Hu,Hu)=\frac{1}{2}\pi(o)\int_{\dd X}\int_{\dd X}(u(x)-u(y))^2\Theta(x,y)\nu(\md x)\nu(\md y),$$
for all $u\in L^2(\dd_MX;\nu)$ with $\Ee_{\dd X}(u,u)<+\infty$.

\cite[Theorem 6.3]{KLW17} calculated Na\"im kernel forcefully
\begin{equation}\label{eqn_Theta}
\Theta(x,y)\asymp\frac{1}{|x-y|^{\alpha+\beta}},
\end{equation}
where $\alpha=\log3/\log2$ is Hausdorff dimension of SG, $\beta=-\log\lambda/\log2\in(0,+\infty)$, $\lambda\in(0,1)$. No message of upper bound for $\beta$ of walk dimension appeared in their calculation.

\section{Regular Dirichlet Form on $X$}\label{sec_df_X}

In this section, we construct a regular Dirichlet form $\Ee_X$ on $X$ and its corresponding symmetric Hunt process $\myset{X_t}$. We prove that the Martin boundaries of $\myset{X_t}$ and $Z$ coincide. We show that $\Ee_X$ is stochastically incomplete and $\myset{X_t}$ goes to infinity in finite time almost surely.

Let $m:X\to(0,+\infty)$ be a positive function given by
$$m(x)=\left(\frac{c}{3\lambda}\right)^{|x|},x\in X,$$
where $c\in(0,\lambda)\subseteq(0,1)$. Then $m$ can be regarded as a measure on $X$. Note that
$$m(X)=\sum_{x\in X}m(x)=\sum_{n=0}^\infty3^n\cdot\left(\frac{c}{3\lambda}\right)^n=\sum_{n=0}^{\infty}\left(\frac{c}{\lambda}\right)^n<+\infty,$$
we have $m$ is a finite measure on $X$. We construct a symmetric form on $L^2(X;m)$ given by
$$
\begin{cases}
&\Ee_X(u,u)=\frac{1}{2}\sum_{x,y\in X}c(x,y)(u(x)-u(y))^2,\\
&\Ff_X=\text{the }(\Ee_X)_1\text{-closure of }C_0(X),
\end{cases}
$$
where $C_0(X)$ is the set of all functions with finite support. It is obvious that $(\Ee_X,\Ff_X)$ is a regular Dirichlet form on $L^2(X;m)$. By \cite[Theorem 7.2.1]{FOT11}, it corresponds to a symmetric Hunt process on $X$. Roughly speaking, this process is a variable speed continuous time random walk characterized by holding at one node with time distributed to exponential distribution and jumping according to random walk. For some discussion of continuous time random walk, see \cite[Chapter 2]{Nor98}. We give detailed construction as follows.

Let $(\Omega,\Ff,\PP)$ be a probability space on which given a random walk $\myset{Y_n}$ with transition probability $P$ and initial distribution $\sigma$ and a sequence of independent exponential distributed random variables $\myset{S_n}$ with parameter $1$, that is, $\PP[S_n\in\md t]=e^{-t}\md t$. Assume that $\myset{S_n}$ is independent of $\myset{Y_n}$. Let $\alpha(x)=\pi(x)/m(x)$, $x\in X$. For all $n\ge1$, let $T_n=S_n/\alpha(Y_{n-1})$, $J_n=T_1+\ldots+T_n$, $J_0=0$. Then $T_n$ is called the $n$-th holding time and $J_n$ is called the $n$-th jumping time. Let
$$
X_t=
\begin{cases}
Y_n,&\text{if }J_n\le t<J_{n+1}\text{ for some }n\ge0,\\
\dd,&\text{otherwise},
\end{cases}
$$
where $\dd$ is death point. This construction is similar to that of Poisson process and it is called variable speed continuous time random walk in some literature. $\myset{X_t}$ is a symmetric Hunt process with initial distribution $\sigma$. By calculating the generators of $\Ee_X$ and $\myset{X_t}$, we have $\myset{X_t}$ is the symmetric Hunt process corresponding to $\Ee_X$.

By the construction of $\myset{X_t}$ in terms of $\myset{Y_n}$, the Martin boundary of $\myset{X_t}$ is the same as the Martin boundary of $Z$. Indeed, by Dirichlet form theory, the Green function of $\myset{X_t}$ is given by
$$g(x,y)=\frac{G(x,y)}{\pi(y)}\text{ for all }x,y\in X.$$
Hence the Martin kernel of $\myset{X_t}$ is given by
$$k(x,y)=\frac{g(x,y)}{g(o,y)}=\frac{G(x,y)/\pi(y)}{G(o,y)/\pi(y)}=\frac{G(x,y)}{G(o,y)}=K(x,y)\text{ for all }x,y\in X.$$
Hence the Martin boundaries of $\myset{X_t}$ and $Z$ coincide. Moreover, $\Ee_X$ is transient.

\begin{mythm}\label{thm_sto}
$(\Ee_X,\Ff_X)$ on $L^2(X;m)$ is stochastically incomplete.
\end{mythm}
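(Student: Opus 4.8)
The plan is to prove stochastic incompleteness by showing that the process $\myset{X_t}$ has a finite lifetime. Concretely, I would show that, started at the root $o$, the total lifetime
$$\zeta=J_\infty=\sum_{n\ge1}T_n$$
has finite expectation. This forces $\zeta<+\infty$ $\PP_o$-a.s., so that the transition semigroup $P_t$ of $\myset{X_t}$ satisfies $P_t\indi(o)=\PP_o[\zeta>t]<1$ for all large $t$; since $m(\myset{o})>0$ this gives $P_t\indi\ne\indi$ in $L^2(X;m)$, i.e.\ $(\Ee_X,\Ff_X)$ is not conservative. (This also yields the stronger assertion announced in the introduction, that $\myset{X_t}$ escapes to infinity in finite time almost surely.)

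To compute $\EE_o[\zeta]$ I would use the holding-time description of the variable speed chain. The mean holding time at a state $y$ is $1/\alpha(y)=m(y)/\pi(y)$, so, conditioning on the jump chain $\myset{Y_n}$ started at $o$ and using $\EE[S_n]=1$,
\[
\EE_o[\zeta]=\sum_{k=0}^\infty\EE_o\!\left[\frac{1}{\alpha(Y_k)}\right]=\sum_{y\in X}\frac{G(o,y)}{\alpha(y)}=\sum_{y\in X}g(o,y)\,m(y),
\]
where $g(x,y)=G(x,y)/\pi(y)$ is the Green function of $\myset{X_t}$ recorded above; equivalently this is just $\int_X g(o,\cdot)\,\md m$. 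The reversibility relation $\pi(x)P(x,y)=\pi(y)P(y,x)$ propagates to $\pi(x)G(x,y)=\pi(y)G(y,x)$, so $g$ is symmetric and $g(o,y)=g(y,o)=G(y,o)/\pi(o)$. I would then insert $G(y,o)=F(y,o)G(o,o)$ with $F(y,o)=\lambda^{|y|}$ from (\ref{eqn_F}) and $G(o,o)=1/(1-\lambda)$ from (\ref{eqn_G}), together with $\pi(o)=3$ (the root has only its three vertical edges of conductance $c(0,1)=1$), to get the clean formula $g(o,y)=\lambda^{|y|}/\bigl(3(1-\lambda)\bigr)$.

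The key point is that the measure $m$ is chosen exactly so that the level-by-level sum becomes geometric. Since $m(y)=(c/(3\lambda))^{|y|}$ we have $\lambda^{|y|}m(y)=(c/3)^{|y|}$, and summing over the $3^n$ vertices of each sphere $S_n$ gives
\[
\EE_o[\zeta]=\frac{1}{3(1-\lambda)}\sum_{y\in X}\lambda^{|y|}m(y)=\frac{1}{3(1-\lambda)}\sum_{n=0}^\infty 3^n\Bigl(\frac{c}{3}\Bigr)^n=\frac{1}{3(1-\lambda)}\sum_{n=0}^\infty c^n=\frac{1}{3(1-\lambda)(1-c)},
\]
which is finite because $c\in(0,\lambda)\subseteq(0,1)$. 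This completes the argument.

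I expect no serious obstacle, as the summation telescopes; the only steps needing genuine care are the two structural facts feeding into it. First, one must justify that the lifetime of $\myset{X_t}$ is exactly $J_\infty$ and that $\EE_x[\zeta]=\int_X g(x,\cdot)\,\md m$, which is where the explicit holding-rate construction is used. Second, one must confirm that the symmetry of $g$ and the evaluation $g(o,y)=\lambda^{|y|}/(3(1-\lambda))$ are legitimate, i.e.\ that (\ref{eqn_F}) and (\ref{eqn_G}) are applied with the root $o$ as target; this is immediate from reversibility but should be stated. Notably the finiteness of $\EE_o[\zeta]$ is forced by $c<1$ alone, so stochastic incompleteness holds throughout the transient regime $\lambda\in(0,1)$.
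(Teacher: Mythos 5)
Your computation of $\EE_o[\zeta]$ via the jump chain, the Green function, reversibility, and the identities $G(y,o)=F(y,o)G(o,o)=\lambda^{|y|}/(1-\lambda)$ is exactly the paper's argument, and your observation that $\PP_o[\zeta<+\infty]=1$ at the atom $o$ already forces non-conservativeness is a valid way to conclude. The only (inessential for this statement) difference is that the paper additionally propagates $\EE_x[\zeta]<+\infty$ to every $x\in X$ via the strong Markov property at the stopping times $J_n$, which it needs later for Theorem \ref{thm_infty}.
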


We prove stochastic incompleteness by considering lifetime $\zeta=\sum_{n=1}^\infty T_n=\lim_{n\to+\infty}J_n$. This quantity is called the (first) explosion time in \cite[Chapter 2, 2.2]{Nor98}. We need a proposition for preparation.

\begin{myprop}\label{prop_jump}
The jumping times $J_n$ are stopping times of $\myset{X_t}$ for all $n\ge0$.
\end{myprop}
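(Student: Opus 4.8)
The plan is to work with the natural (completed, right-continuous) filtration $\myset{\Ff_t}_{t\ge0}$ of the Hunt process $\myset{X_t}$ and to argue by induction on $n$, exploiting the fact that the jumping times are exactly the instants at which $\myset{X_t}$ changes its value. The starting observation is that the graph $X$ has no self-loops, so $c(x,x)=0$ and hence $P(x,x)=0$ for every $x\in X$; therefore $Y_{n+1}\ne Y_n$ for all $n$, and since $X_t=Y_n$ on $[J_n,J_{n+1})$ the process takes a genuinely new value at each jumping time. Combined with the right-continuity of the paths (built into the construction, as $X_t$ is constant on each $[J_n,J_{n+1})$), this yields the recursive description
\begin{equation*}
J_0=0,\qquad J_{n+1}=\inf\myset{t>J_n:X_t\ne X_{J_n}},
\end{equation*}
and it suffices to show that each right-hand side is an $\myset{\Ff_t}$-stopping time.

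For the base case, $J_0=0$ is trivially a stopping time, and for $J_1=\inf\myset{t>0:X_t\ne X_0}$ I would reduce the uncountable defining condition to a countable one by right-continuity:
\begin{equation*}
\myset{J_1<t}=\bigcup_{s\in\mathbb{Q}\cap(0,t)}\myset{X_s\ne X_0}.
\end{equation*}
Indeed, if a rational $s\in(0,t)$ satisfies $X_s\ne X_0$ then a jump has occurred by time $s$, so $J_1\le s<t$; conversely, if $J_1<t$ then on the nonempty open interval $(J_1,\min\myset{J_2,t})$ the process equals $Y_1\ne Y_0=X_0$, and any rational $s$ there witnesses the union. As each $\myset{X_s\ne X_0}\in\Ff_s\subseteq\Ff_t$, we obtain $\myset{J_1<t}\in\Ff_t$, and right-continuity of the filtration upgrades this to $\myset{J_1\le t}=\bigcap_k\myset{J_1<t+1/k}\in\Ff_t$.

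For the inductive step, assume $J_n$ is a stopping time. The same path analysis gives
\begin{equation*}
\myset{J_{n+1}<t}=\bigcup_{s\in\mathbb{Q}\cap(0,t)}\Big(\myset{J_n<s}\cap\myset{X_s\ne X_{J_n\wedge s}}\Big),
\end{equation*}
where $\myset{J_n<s}\in\Ff_s$ by the inductive hypothesis. The crucial point is the measurability of the value $X_{J_n}$ read off at the random time $J_n$: since $\myset{X_t}$ is right-continuous and adapted, it is progressively measurable, so evaluating it at the bounded stopping time $J_n\wedge s\le s$ yields an $\Ff_s$-measurable random variable $X_{J_n\wedge s}$, which coincides with $X_{J_n}$ on $\myset{J_n<s}$. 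Hence every set in the union lies in $\Ff_s\subseteq\Ff_t$, so $\myset{J_{n+1}<t}\in\Ff_t$, and right-continuity of $\myset{\Ff_t}$ again gives $\myset{J_{n+1}\le t}\in\Ff_t$.

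The main obstacle is precisely this measurability issue in the inductive step, namely making sense of ``$X_t\ne X_{J_n}$'' as an $\Ff_t$-measurable event when $X_{J_n}$ is the process evaluated at a random (stopping) time rather than at a deterministic one; this is resolved by progressive measurability of the right-continuous adapted process together with the standard fact that $X_\sigma$ is $\Ff_\sigma$-measurable for a stopping time $\sigma$. The remaining ingredients — absence of self-loops and right-continuity of the paths — are exactly what guarantee that the jumping times coincide with the value-change times and that the uncountable infima can be tested along the rationals.
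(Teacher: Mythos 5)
Your proof is correct and follows essentially the same route as the paper's: induction on $n$, writing the event that the $(n+1)$-th jump has occurred as a countable union over rational times of events of the form $\myset{J_n<s}\cap\myset{X_s\ne X_{J_n}}$, each of which lies in $\Ff_s$. If anything, your version is the more careful one --- you get the indexing of the union right, justify the $\Ff_s$-measurability of $X_{J_n\wedge s}$ via progressive measurability of the right-continuous adapted process, and pass from $\myset{J_{n+1}<t}$ to $\myset{J_{n+1}\le t}$ using right-continuity of the filtration, all points that the paper's one-line display glosses over.
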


\begin{proof}
Let $\myset{\Ff_t}$ be the minimum completed admissible filtration with respect to $\myset{X_t}$. It is obvious that $J_0=0$ is a stopping time of $\myset{X_t}$. Assume that $J_n$ is a stopping time of $\myset{X_t}$, then for all $t\ge0$, we have
$$\myset{J_{n+1}\le t}=\cup_{s\in\mathbb{Q},s\le t}\left(\myset{J_n\le t}\cap\myset{X_s\ne X_{J_n}}\right)\in\Ff_t,$$
hence $J_{n+1}$ is a stopping time of $\myset{X_t}$. By induction, it follows that $J_n$ are stopping times of $\myset{X_t}$ for all $n\ge0$.
\end{proof}

\begin{proof}[Proof of Theorem \ref{thm_sto}]
By Equation (\ref{eqn_F}), we have
$$
\begin{aligned}
\EE_o\zeta&=\EE_o\sum_{n=1}^\infty T_n=\sum_{n=1}^\infty\EE_o\left[\frac{S_n}{\alpha(Y_{n-1})}\right]=\sum_{n=1}^\infty\EE_o[{S_n}]\EE_o\left[\frac{1}{\alpha(Y_{n-1})}\right]=\sum_{n=1}^\infty\EE_o\frac{m}{\pi}(Y_{n-1})\\
&=\sum_{n=0}^\infty\EE_o\frac{m}{\pi}(Y_{n})=\sum_{n=0}^\infty\sum_{x\in X}\frac{m(x)}{\pi(x)}P^{(n)}(o,x)=\sum_{x\in X}\frac{m(x)}{\pi(x)}G(o,x)\\
&=\sum_{x\in X}\frac{m(x)}{\pi(x)}\frac{\pi(x)G(x,o)}{\pi(o)}=\sum_{x\in X}\frac{m(x)G(x,o)}{\pi(o)}=\sum_{x\in X}\frac{m(x)F(x,o)G(o,o)}{\pi(o)}\\
&=\frac{G(o,o)}{\pi(o)}\sum_{n=0}^\infty3^n\cdot\left(\frac{c}{3\lambda}\right)^n\cdot\lambda^n=\frac{G(o,o)}{\pi(o)}\sum_{n=0}^\infty c^n.
\end{aligned}
$$
Since $c\in(0,\lambda)\subseteq(0,1)$, we have $\EE_o\zeta<+\infty$, $\PP_o[\zeta<+\infty]=1$.

For all $x\in X$, let $n=|x|$, note that $P^{(n)}(o,x)>0$, by Proposition \ref{prop_jump} and strong Markov property, we have
$$
\begin{aligned}
\EE_o\zeta&\ge\EE_o\left[\zeta\indi_{\myset{X_{J_n}=x}}\right]=\EE_o\left[\EE_o\left[\zeta\indi_{\myset{X_{J_n}=x}}|X_{J_n}\right]\right]=\EE_o\left[\indi_{\myset{X_{J_n}=x}}\EE_o\left[\zeta|X_{J_n}\right]\right]\\
&=\EE_o\left[\indi_{\myset{X_{J_n}=x}}\EE_{X_{J_n}}\left[\zeta\right]\right]=P^{(n)}(o,x)\EE_x\left[\zeta\right].
\end{aligned}
$$
Hence $\EE_x\zeta<+\infty$, $\PP_x\left[\zeta<+\infty\right]=1$ for all $x\in X$, $\Ee_X$ is stochastically incomplete.
\end{proof}

By \cite[Proposition 1.17(b)]{Wo00}, for a transient random walk $Z$ on $X$, for all finite set $A\subseteq X$, we have $\PP_x\left[Z_n\in A\text{ for infinitely many }n\right]=0$ for all $x\in X$. Roughly speaking, a transient random walk will go to infinity almost surely. For variable speed continuous time random walk $\myset{X_t}$ on $X$, we have following theorem.

\begin{mythm}\label{thm_infty}
$\myset{X_t}$ goes to infinity in finite time almost surely, that is,
$$\PP_x\left[\lim_{t\uparrow\zeta}\lvert X_t\rvert=+\infty,\zeta<+\infty\right]=1\text{ for all }x\in X.$$
\end{mythm}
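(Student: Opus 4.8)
The plan is to deduce the result from two almost-sure facts that can be established separately and then combined by a pathwise argument along the jump skeleton $\myset{Y_n}$. The first fact is that the lifetime is finite, $\PP_x[\zeta<+\infty]=1$, which is precisely Theorem \ref{thm_sto}. The second is that the embedded discrete random walk escapes to infinity, $|Y_n|\to+\infty$ $\PP_x$-a.s. Once both are in hand, I would argue pathwise that $X_t=Y_{n(t)}$ for an index $n(t)$ that tends to $+\infty$ as $t\uparrow\zeta$, which forces $|X_t|\to+\infty$.

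For the escape of the skeleton, I would use that under $\PP_x$ the sequence $\myset{Y_n}$ is exactly the transient random walk $Z$ started at $x$, so the fact recorded just before the theorem, \cite[Proposition 1.17(b)]{Wo00}, applies: for every finite set $A\subseteq X$ one has $\PP_x[Y_n\in A\text{ for infinitely many }n]=0$. Taking $A=B_R$ (finite, since $X$ is locally finite) and intersecting the resulting full-measure events over $R\in\mathbb{N}$, which removes only a countable union of null sets, I conclude that almost surely the walk visits each ball $B_R$ only finitely often, i.e. $|Y_n|\to+\infty$.

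Finally, on the full-measure event $\myset{\zeta<+\infty}\cap\myset{|Y_n|\to+\infty}$ I would carry out the deterministic step. Since every holding time $T_n=S_n/\alpha(Y_{n-1})$ is a.s. strictly positive, the jumping times are strictly increasing with $J_n\uparrow\zeta$; for $t<\zeta$ let $n(t)$ be the unique index with $J_{n(t)}\le t<J_{n(t)+1}$, so that $X_t=Y_{n(t)}$. Strict monotonicity together with $J_n\uparrow\zeta$ gives $n(t)\to+\infty$ as $t\uparrow\zeta$, whence $|X_t|=|Y_{n(t)}|\to+\infty$. I expect the only point needing real care — the main (and rather mild) obstacle — to be this last coupling step: verifying that as $t\uparrow\zeta$ the skeleton index genuinely diverges, so that the limiting behaviour of $|X_t|$ is governed by that of $|Y_n|$ rather than being trapped at a finite level. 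Everything else is either already proved in Theorem \ref{thm_sto} or supplied by the cited transience statement.
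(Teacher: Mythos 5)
Your proposal is correct and follows essentially the same route as the paper: combine $\PP_x[\zeta<+\infty]=1$ from Theorem \ref{thm_sto} with the transience statement $\PP_x[Y_n\in B_m\text{ for infinitely many }n]=0$, intersect the countably many full-measure events over the balls $B_m$, and then argue pathwise that the skeleton index $n(t)$ with $X_t=Y_{n(t)}$ diverges as $t\uparrow\zeta$ because $J_n\uparrow\zeta$. The coupling step you flag as the main point of care is exactly the step the paper carries out explicitly via the threshold time $T=J_{N(\omega)}(\omega)$.
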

\begin{proof}
There exists $\Omega_0$ with $\PP_x(\Omega_0)=1$ such that $\zeta(\omega)<+\infty$ for all $\omega\in\Omega_0$. For all $m\ge1$, we have $\PP_x\left[Y_n\in B_m\text{ for infinitely many }n\right]=0$, hence there exists $\Omega_m$ with $\PP_x(\Omega_m)=1$ such that for all $\omega\in\Omega_m$, there exist $N=N(\omega)\ge1$, for all $n\ge N$, $Y_n(\omega)\notin B_m$. Hence $\PP_x\left(\Omega_0\cap\cap_{m=1}^\infty\Omega_m\right)=1$. For all $\omega\in\Omega_0\cap\cap_{m=1}^\infty\Omega_m$, we have $J_n(\omega)\le J_{n+1}(\omega)<\zeta(\omega)<+\infty$. For all $m\ge1$, since $\omega\in\Omega_m$, there exists $N=N(\omega)\ge1$, for all $n>N$, we have $Y_n(\omega)\notin B_m$. By definition, $X_t(\omega)=Y_n(\omega)$ if $J_n(\omega)\le t<J_{n+1}(\omega)$. Letting $T=J_{N(\omega)}(\omega)$, for all $t>T$ there exists $n\ge N$ such that $J_n(\omega)\le t<J_{n+1}(\omega)$, hence $X_t(\omega)=Y_n(\omega)\not\in B_m$, that is, $\lim_{t\uparrow\zeta(\omega)}\lvert X_t(\omega)\rvert=+\infty$. We obtain the desired result.
\end{proof}

\section{Active Reflected Dirichlet Space $(\Ee^{\rref},\Ff^{\rref}_a)$}\label{sec_ref}

In this section, we construct active reflected Dirichlet form $(\Ee^{\rref},\Ff^{\rref}_a)$ and show that $\Ff_X\subsetneqq\Ff^{\rref}_a$, hence $\Ee^{\rref}$ is not regular.

Reflected Dirichlet space was introduced by Chen \cite{Chen92}. This is a generalization of reflected Brownian motion in Euclidean space. He considered abstract Dirichlet form instead of constructing reflection path-wisely from probabilistic viewpoint. More detailed discussion is incorporated into his book with Fukushima \cite[Chapter 6]{CF12}.

Given a regular transient Dirichlet form $(\Ee,\Ff)$ on $L^2(X;m)$, we can do reflection in two ways:
\begin{itemize}
\item The linear span of $\Ff$ and all harmonic functions of finite ``$\Ee$-energy".
\item All functions that are ``locally" in $\Ff$ and have finite ``$\Ee$-energy".
\end{itemize}
We use the second way which is more convenient. Recall Beurling-Deny decomposition. Since $(\Ee,\Ff)$ is regular, we have
$$\Ee(u,u)=\frac{1}{2}\mu^c_{<u>}(X)+\int_{X\times X\backslash d}(u(x)-u(y))^2J(\md x\md y)+\int_Xu(x)^2k(\md x)$$
for all $u\in\Ff_e$, here we use the convention that all functions in $\Ff_e$ are quasi-continuous. By this formula, we can define
$$\hat{\Ee}(u,u)=\frac{1}{2}\mu^c_{<u>}(X)+\int_{X\times X\backslash d}(u(x)-u(y))^2J(\md x\md y)+\int_Xu(x)^2k(\md x)$$
for all $u\in\Ff_{\mathrm{loc}}$. We give the definition of reflected Dirichlet space as follows. \cite[Theorem 6.2.5]{CF12} gave
$$
\begin{cases}
&\Ff^{\rref}=\myset{u:\text{ finite }m\text{-a.e.},\exists\myset{u_n}\subseteq\Ff_{\mathrm{loc}}\text{ that is }\hat{\Ee}\text{-Cauchy},u_n\to u,m\text{-a.e. on }X},\\
&\hat{\Ee}(u,u)=\lim_{n\to+\infty}\hat{\Ee}(u_n,u_n).
\end{cases}
$$
Let $\tau_ku=\left((-k)\vee u\right)\wedge k$, $k\ge1$, then \cite[Theorem 6.2.13]{CF12} gave
$$
\begin{cases}
&\Ff^{\rref}=\myset{u:|u|<+\infty,m\text{-a.e.},\tau_ku\in\Ff_{\mathrm{loc}}\forall k\ge1,\sup_{k\ge1}\hat{\Ee}(\tau_ku,\tau_ku)<+\infty},\\
&\Ee^\rref(u,u)=\lim_{k\to+\infty}\hat{\Ee}(\tau_ku,\tau_ku).
\end{cases}
$$
Let $\Ff^\rref_a=\Ff^\rref\cap L^2(X;m)$, then $(\Ff^\rref_a,\Ee^\rref)$ is called active reflected Dirichlet space. \cite[Theorem 6.2.14]{CF12} showed that $(\Ee^\rref,\Ff^\rref_a)$ is a Dirichlet form on $L^2(X;m)$.

Return to our case, since
$$\Ee_X(u,u)=\frac{1}{2}\sum_{x,y\in X}c(x,y)(u(x)-u(y))^2\text{ for all }u\in\Ff_X,$$
$\Ee_X$ has only jumping part, we have
$$\hat{\Ee}_X(u,u)=\frac{1}{2}\sum_{x,y\in X}c(x,y)(u(x)-u(y))^2\text{ for all }u\in(\Ff_X)_{\mathrm{loc}}.$$
By the definition of local Dirichlet space
$$(\Ff_X)_{\mathrm{loc}}=\myset{u:\forall G\subseteq X\text{ relatively compact open,}\exists v\in\Ff_X,\text{s.t. }u=v,m\text{-a.e. on }G}.$$
For all $G\subseteq X$ relatively compact open, we have $G$ is a finite set, for all function $u$ on $X$, let $v(x)=u(x)$, $x\in G$, $v(x)=0$, $x\in X\backslash G$, then $v\in C_0(X)\subseteq\Ff_X$ and $u=v$ on $G$, hence $(\Ff_X)_{\mathrm{loc}}=\myset{u:u\text{ is a finite function on }X}$.
$$\Ff^\rref=\myset{u:|u(x)|<+\infty,\forall x\in X,\sup_{k\ge1}\frac{1}{2}\sum_{x,y\in X}c(x,y)\left(\tau_ku(x)-\tau_ku(y)\right)^2<+\infty}.$$
By monotone convergence theorem, we have
$$\frac{1}{2}\sum_{x,y\in X}c(x,y)\left(\tau_ku(x)-\tau_ku(y)\right)^2\uparrow\frac{1}{2}\sum_{x,y\in X}c(x,y)\left(u(x)-u(y)\right)^2,$$
hence
$$\sup_{k\ge1}\frac{1}{2}\sum_{x,y\in X}c(x,y)\left(\tau_ku(x)-\tau_ku(y)\right)^2=\frac{1}{2}\sum_{x,y\in X}c(x,y)\left(u(x)-u(y)\right)^2,$$
and
$$
\begin{cases}
&\Ff^\rref=\myset{u:\text{ finite function},\frac{1}{2}\sum_{x,y\in X}c(x,y)\left(u(x)-u(y)\right)^2<+\infty},\\
&\Ee^\rref(u,u)=\frac{1}{2}\sum_{x,y\in X}c(x,y)\left(u(x)-u(y)\right)^2.
\end{cases}
$$
$$\Ff^\rref_a=\myset{u\in L^2(X;m):\frac{1}{2}\sum_{x,y\in X}c(x,y)\left(u(x)-u(y)\right)^2<+\infty}.$$
Indeed, we can show that $(\Ee^\rref,\Ff^\rref_a)$ is a Dirichlet form on $L^2(X;m)$ directly. In general, $(\Ee^\rref,\Ff^\rref_a)$ on $L^2(X;m)$ is not regular, $\Ff_X\subsetneqq\Ff^\rref_a$. This is like $H^1_0(D)\subsetneqq H^1(D)$. We need to show $\Ff_X\ne\Ff^\rref_a$, otherwise reflection is meaningless. Then we do regular representation of $(\Ee^\rref,\Ff^\rref_a)$ on $L^2(X;m)$ to enlarge the space $X$ to Martin compactification $\mybar{X}$ and Martin boundary $\dd X$ will appear.

\begin{mythm}
$\Ff_X\subsetneqq\Ff^{\rref}_a$, hence $\Ee^{\rref}$ is not regular.
\end{mythm}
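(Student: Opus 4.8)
The plan is to exhibit a single explicit function lying in $\Ff^\rref_a$ but not in $\Ff_X$; once such a witness is found the strict inclusion is immediate. The natural candidate is the constant function $\indi\equiv1$, and it works precisely because $m$ is a \emph{finite} measure on $X$: as computed above, $m(X)=\sum_{n=0}^\infty(c/\lambda)^n<+\infty$, so $\indi\in L^2(X;m)$, while its energy vanishes, $\Ee^\rref(\indi,\indi)=\tfrac12\sum_{x,y\in X}c(x,y)(1-1)^2=0$. Hence $\indi\in\Ff^\rref_a$ with no effort. The finiteness of $m$ (the reason for the restriction $c\in(0,\lambda)$) is exactly what makes the constant the right witness.

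Before asserting strictness I would record the (routine) inclusion $\Ff_X\subseteq\Ff^\rref_a$. If $u\in\Ff_X$, choose $u_n\in C_0(X)$ with $(\Ee_X)_1(u_n-u,u_n-u)\to0$; then $u\in L^2(X;m)$, and since $m(x)>0$ for every $x$, $L^2$-convergence forces $u_n(x)\to u(x)$ at each point. The energies $\Ee_X(u_n,u_n)$ are $\Ee_X$-Cauchy, hence bounded, so Fatou's lemma yields $\tfrac12\sum_{x,y}c(x,y)(u(x)-u(y))^2\le\liminf_n\Ee_X(u_n,u_n)<+\infty$, i.e.\ $u\in\Ff^\rref_a$.

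The heart of the argument is to show $\indi\notin\Ff_X$, and this is where transience of $(\Ee_X,\Ff_X)$, established earlier, enters decisively. By the general theory of transient Dirichlet forms \cite{FOT11}, the extended Dirichlet space $(\Ff_X)_e$ of a transient form is a Hilbert space on which $\Ee_X$ itself serves as inner product; in particular $\Ee_X$ is positive definite there, so any $u\in(\Ff_X)_e$ with $\Ee_X(u,u)=0$ must vanish $m$-a.e., hence (every singleton having positive mass) everywhere. Now $\indi$ has zero energy but $\indi\ne0$, so $\indi\notin(\Ff_X)_e$. Since $m$ is finite, $\indi\in L^2(X;m)$, and because $(\Ff_X)_e\cap L^2(X;m)=\Ff_X$, membership $\indi\in\Ff_X$ would give $\indi\in(\Ff_X)_e$, a contradiction. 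Therefore $\indi\in\Ff^\rref_a\setminus\Ff_X$ and $\Ff_X\subsetneqq\Ff^\rref_a$.

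Finally I would deduce non-regularity. On $C_0(X)$ the forms $\Ee_X$ and $\Ee^\rref$ coincide and carry the same $1$-norm, so the $(\Ee^\rref)_1$-closure of $C_0(X)$ inside $\Ff^\rref_a$ is exactly $\Ff_X$. Were $(\Ee^\rref,\Ff^\rref_a)$ regular, then $C_0(X)=C_c(X)$ (recall $X$ is discrete and locally finite, so compact support means finite support) would be $(\Ee^\rref)_1$-dense in $\Ff^\rref_a$, forcing $\Ff_X=\Ff^\rref_a$ and contradicting the strict inclusion just proved. I expect the only genuinely delicate point to be the rigorous exclusion $\indi\notin\Ff_X$: everything hinges on correctly invoking transience and the positive-definiteness of $\Ee_X$ on $(\Ff_X)_e$, together with the observation that the finiteness of $m$ ties $L^2$-membership to membership in the extended space.
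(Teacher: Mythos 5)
Your proof is correct, and it reaches the same witness as the paper --- the constant function $\indi$, which lies in $\Ff^\rref_a$ because $m(X)<+\infty$ and has zero energy --- but it excludes $\indi$ from $\Ff_X$ by a different mechanism. The paper argues at the level of global properties of the forms: $\indi\in\Ff^\rref_a$ with $\Ee^\rref(\indi,\indi)=0$ makes $\Ee^\rref$ recurrent, hence conservative, whereas $\Ee_X$ was shown earlier to be transient and stochastically incomplete (the latter via the expected-lifetime computation of Theorem \ref{thm_sto}), so the two forms cannot coincide. You instead argue at the level of the function itself: transience makes $\bigl((\Ff_X)_e,\Ee_X\bigr)$ a Hilbert space, so $\Ee_X$ is positive definite on $(\Ff_X)_e$ and the nonzero zero-energy function $\indi$ cannot belong to it, whence $\indi\notin(\Ff_X)_e\cap L^2(X;m)=\Ff_X$. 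Your route is slightly more economical, since it needs only transience and not the stochastic incompleteness theorem, and it has the virtue of producing an explicit element of $\Ff^\rref_a\setminus\Ff_X$ rather than a pure existence statement. You also spell out two steps the paper leaves implicit --- the non-strict inclusion $\Ff_X\subseteq\Ff^\rref_a$ via Fatou, and the deduction of non-regularity from the fact that the $(\Ee^\rref)_1$-closure of $C_0(X)$ in $\Ff^\rref_a$ is exactly $\Ff_X$ --- both of which are handled correctly.
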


\begin{proof}
Since $m(X)<+\infty$, we have $1\in\Ff^{\rref}_a$ and $\Ee^{\rref}(1,1)=0$, by \cite[Theorem 1.6.3]{FOT11}, $\Ee^{\rref}$ is recurrent, by \cite[Lemma 1.6.5]{FOT11}, $\Ee^{\rref}$ is conservative or stochastically complete. Since $\Ee_X$ is transient and stochastically incomplete, we have $\Ff_X\ne\Ff^{\rref}_a$. Note that $\Ee^{\rref}$ is not regular, there is no corresponding Hunt process, but recurrent and conservative properties are still well-defined, see \cite[Chapter 1, 1.6]{FOT11}.
\end{proof}

\section{Regular Representation of $(\Ee^\rref,\Ff^\rref_a)$}\label{sec_repre}

In this section, we construct a regular Dirichlet form $(\Ee_{\mybar{X}},\Ff_{\mybar{X}})$ on $L^2(\mybar{X};m)$ which is a regular representation of Dirichlet form $(\Ee^\rref,\Ff^\rref_a)$ on $L^2(X;m)$, where $\mybar{X}$ is the Martin compactification of $X$ and $m$ is given as above.

Recall that $(\frac{1}{2}\DD,H^1(D))$ on $L^2(D)$ is not regular and $(\frac{1}{2}\DD,H^1(D))$ on $L^2(\mybar{D},\indi_D(\md x))$ is a regular representation. Our construction is very simple and similar to this case. Let
$$
\begin{cases}
&\Ee_{\mybar{X}}(u,u)=\frac{1}{2}\sum_{x,y\in X}c(x,y)(u(x)-u(y))^2,\\
&\Ff_{\mybar{X}}=\myset{u\in C(\mybar{X}):\sum_{x,y\in X}c(x,y)(u(x)-u(y))^2<+\infty}.
\end{cases}
$$
We show that $(\Ee_{\mybar{X}},\Ff_{\mybar{X}})$ is a regular Dirichlet form on $L^2({\mybar{X}};m)$.

\begin{mythm}\label{thm_main}
If $\lambda\in(1/5,1/3)$, then $(\Ee_{\mybar{X}},\Ff_{\mybar{X}})$ is a regular Dirichlet form on $L^2({\mybar{X}};m)$.
\end{mythm}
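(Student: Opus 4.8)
The plan is to check, in turn, that $(\Ee_{\mybar{X}},\Ff_{\mybar{X}})$ is a symmetric, closed, Markovian form with dense domain, and then that it is regular; both constraints on $\lambda$ will enter through a single effective‑resistance estimate. Symmetry is clear, and the Markovian property is immediate: every normal contraction $T$ satisfies $|Tu(x)-Tu(y)|\le|u(x)-u(y)|$, so it does not increase $\sum_{x,y}c(x,y)(u(x)-u(y))^2$, and it maps $C(\mybar{X})$ into itself, hence preserves $\Ff_{\mybar{X}}$. For density in $L^2(\mybar{X};m)$ I use that $m(\dd X)=0$, so that $L^2(\mybar{X};m)$ is canonically $\ell^2(X,m)$, and that each vertex is isolated in $\mybar{X}$ (since $|x\wedge y|\le|x|$ gives $\rho_a(x,y)\ge e^{-a|x|}$ for $y\ne x$). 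Therefore every finitely supported function, extended by $0$ on $\dd X$, lies in $C(\mybar{X})\cap\Ff_{\mybar{X}}$, so $C_0(X)\subseteq\Ff_{\mybar{X}}$; as $C_0(X)$ is already dense in $\ell^2(X,m)$, $\Ff_{\mybar{X}}$ is dense.

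The analytic core is a resistance estimate. Write $T_x$ for the subtree rooted at $x$ and $R_0$ for the effective resistance in $X$ between two outer corners, say $\overline{0^{\infty}}$ and $\overline{1^{\infty}}$. Self‑similarity of the conductances, namely that $T_x$ carries $(3\lambda)^{|x|}$ times the conductances of $X$, together with the Sierpi\'nski gluing of the three first‑level subtrees (which multiplies corner resistance by $\tfrac53$), yields a renormalisation relating $R_0$ to $5\lambda R_0$ up to a finite correction coming from the paths through the root $o$; this has an admissible finite positive solution exactly when $5\lambda>1$, i.e.\ $\lambda>1/5$. Combining the scaling $R_{T_x}(\cdot,\cdot)=(3\lambda)^{|x|}R_X(\cdot,\cdot)$ with $R_0<+\infty$ and the elementary bound $|u(\xi)-u(\eta)|^2\le R_{T_x}(\xi,\eta)\,\Ee_{\mybar{X}}(u,u)$, I then expect
\[
\mathrm{osc}_{\overline{T_x}}\,u\;\le\;C\,(3\lambda)^{|x|/2}\,\Ee_{\mybar{X}}(u,u)^{1/2}\qquad(u\in\Ff_{\mybar{X}}),
\]
with $C=C(R_0)$. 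Here both constraints surface: finiteness of $R_0$ requires $\lambda>1/5$, while the geometric decay $(3\lambda)^{|x|/2}\to0$ requires $\lambda<1/3$. Since $\myset{\overline{T_x}:|x|=n}$ is a neighbourhood basis at each boundary point, this says precisely that $\Ee_1$‑bounded subsets of $\Ff_{\mybar{X}}$ are equicontinuous on $\mybar{X}$.

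Closedness and regularity both follow. For closedness, an $\Ee_1$‑Cauchy sequence $\myset{u_k}\subseteq\Ff_{\mybar{X}}$ converges in $\ell^2(X,m)$, hence pointwise on the (fully charged, isolated) vertices; applying the oscillation bound to $u_k-u_l$ makes the family uniformly equicontinuous, so the convergence is uniform on $\mybar{X}$ and the limit $u$ is continuous, while Fatou gives $\Ee_{\mybar{X}}(u,u)<+\infty$ and $\Ee_{\mybar{X}}(u_k-u,u_k-u)\to0$, whence $u\in\Ff_{\mybar{X}}$. For regularity, note that $\mybar{X}$ is compact, so $C_c(\mybar{X})=C(\mybar{X})$ and $\Ff_{\mybar{X}}\cap C_c(\mybar{X})=\Ff_{\mybar{X}}$, making the $\Ee_1$‑core condition automatic; it remains to show $\Ff_{\mybar{X}}$ is uniformly dense in $C(\mybar{X})$, which I would obtain from Stone--Weierstrass. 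The space $\Ff_{\mybar{X}}$ contains the constants and is an algebra, since for $u,v\in\Ff_{\mybar{X}}$ (bounded, as continuous on a compact set) the bound $(uv(x)-uv(y))^2\le2\|u\|_\infty^2(v(x)-v(y))^2+2\|v\|_\infty^2(u(x)-u(y))^2$ gives $\Ee_{\mybar{X}}(uv,uv)<+\infty$. Separation of two vertices, or of a vertex from a boundary point, comes from $C_0(X)$; for distinct boundary points $\xi\ne\eta$ I would take the $\Ee_{\mybar{X}}$‑minimiser with values $1$ near $\xi$ and $0$ near $\eta$ (at a finite level where the two cells are already separated), which has finite energy precisely because $R_0<+\infty$ and is continuous, hence in $\Ff_{\mybar{X}}$, by the oscillation estimate.

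The main obstacle is the resistance estimate itself: establishing $R_0<+\infty$ for $\lambda>1/5$ via the self‑similar renormalisation, and upgrading it to the uniform oscillation bound. This is exactly where the threshold $\lambda=1/5$ is forced, for when $\lambda\le1/5$ one has $R_0=+\infty$, the boundary points become mutually polar, and no continuous finite‑energy function can separate them—the same mechanism that in Section~\ref{sec_trivial} makes $\Ff_K$ trivial for $\beta\ge\beta^*$. Thus the argument must genuinely use $5\lambda>1$ and the $\tfrac53$ resistance scaling of SG, rather than the cruder vertical‑ray bounds, which see only the weaker threshold $\lambda=1/3$.
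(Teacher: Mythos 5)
Your overall skeleton (symmetry, Markovianity, density of $C_0(X)$, closedness via an oscillation estimate plus Fatou, regularity via Stone--Weierstrass on the compact space $\mybar{X}$) matches the paper's, and the algebra and vertex-separation steps are fine. The genuine gap is in the one place where $\lambda>1/5$ must actually be used: separating two boundary points. You reduce this to the claim that the corner-to-corner effective resistance $R_0$ satisfies $R_0<+\infty$ exactly when $\lambda>1/5$, with $R_0=+\infty$ for $\lambda\le1/5$; this is both left unproved (``I then expect'', ``I would obtain'') and oriented the wrong way. For every $\lambda<1/3$ the resistance between any two points of $\overline{T_x}$ is at most the resistance of the two vertical geodesic rays through $x$, namely $2\sum_{k\ge|x|}(3\lambda)^k=2(3\lambda)^{|x|}/(1-3\lambda)<+\infty$; in particular $R_0\le 2/(1-3\lambda)$ for all $\lambda<1/3$, your oscillation bound already follows from the ``cruder vertical-ray bound'' you dismiss (this is exactly Lemma \ref{lem_ext}), and closedness never needs $\lambda>1/5$. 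What degenerates for $\lambda\le1/5$ is the opposite of what you describe: the deep horizontal edges, whose conductances $(3\lambda)^{-n}$ beat the SG resistance renormalisation $(5/3)^n$, short all boundary points together, so that every finite-energy function is constant on $\dd X$ (Theorem \ref{thm_constant}) and distinct boundary points cannot be separated at all --- the effective resistance between them collapses rather than blowing up.

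Because of this, your separation step --- ``take the $\Ee_{\mybar{X}}$-minimiser with values $1$ near $\xi$ and $0$ near $\eta$, which has finite energy precisely because $R_0<+\infty$'' --- does not stand as written: the minimiser subject to data on $B_{m+1}$ has finite energy for every $\lambda$ (extending the data by $0$ gives a finitely supported competitor), and the real issue is whether its boundary trace is still non-constant. The paper settles this by constructing the interpolant explicitly: on each small triangle the optimal values $x=(2a+2b+c)/5$, etc., multiply the horizontal energy by exactly $1/(5\lambda)$ per level, and the nine vertical edges contribute the comparable amount $\frac{14}{25C_1}A_1$, so the total energy is a geometric series convergent iff $\lambda>1/5$, while convexity of the interpolation forces $v\equiv1$ on the cell containing $\xi$ and $v\equiv0$ on the cell containing $\eta$. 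This explicit computation (or an equivalent quantitative renormalisation argument) is the missing core of your proof; without it the threshold $1/5$ is never actually established.
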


First, we need a lemma.

\begin{mylem}\label{lem_ext}
If $\lambda<1/3$, then for all $u$ on $X$ with
$$C=\frac{1}{2}\sum_{x,y\in X}c(x,y)(u(x)-u(y))^2<+\infty,$$
$u$ can be extended continuously to $\mybar{X}$.
\end{mylem}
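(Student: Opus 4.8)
The plan is to show that every function $u$ of finite energy is uniformly continuous on $(X,\rho_a)$ and then to extend it to the completion $\hat X=\mybar X$ by continuity. The whole argument rests on a single oscillation estimate obtained by telescoping along vertical geodesics. Fix a vertex $x$ with $|x|=m$ and a descendant $y$ with $|y|=n$, and let $x=v_m\sim v_{m+1}\sim\ldots\sim v_n=y$ be the vertical path joining them, so that the edge $(v_k,v_{k+1})$ has conductance $c(k,k+1)=(3\lambda)^{-k}$. Writing each increment as $(3\lambda)^{k/2}\cdot(3\lambda)^{-k/2}(u(v_{k+1})-u(v_k))$ and applying Cauchy--Schwarz gives
\begin{equation*}
|u(x)-u(y)|^2\le\Big(\sum_{k=m}^{n-1}(3\lambda)^k\Big)\Big(\sum_{k=m}^{n-1}(3\lambda)^{-k}(u(v_{k+1})-u(v_k))^2\Big)\le\frac{(3\lambda)^m}{1-3\lambda}\,C.
\end{equation*}
The second factor is bounded by the total edge energy $C$, and the first factor is a \emph{convergent} geometric series precisely because $\lambda<1/3$. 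This is the only place the hypothesis enters, and it is exactly what forces the estimate to decay in $m$.

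Consequently the oscillation of $u$ over the subtree $T_x$ rooted at $x$ is at most $2(3\lambda)^{m/2}\sqrt{C/(1-3\lambda)}$, which tends to $0$ as $|x|=m\to\infty$. Two consequences follow. First, along any geodesic ray $[x_0,x_1,\ldots]$ from $o$ with $|x_n|=n$ the sequence $\myset{u(x_n)}$ is Cauchy in $\R$ and hence converges, which lets us assign a boundary value to $u$ at the corresponding point of $\dd_hX$. Second, since $\rho_a(x,y)=e^{-a|x|}$ whenever $y$ lies in the subtree of $x$, the estimate is a modulus-of-continuity bound of the form $|u(x)-u(y)|\le A\,\rho_a(x,y)^{\gamma}$ with $\gamma=-\log(3\lambda)/(2a)>0$. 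Together with the subtrees $T_x$ this already yields continuity of the extended function at every boundary point a neighbourhood basis of which is furnished by a single subtree.

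The main obstacle is that $\hat X\cong K$ is \emph{connected}, unlike the boundary of a plain tree: a single subtree $T_x$ (that is, a cell $K_x$) is \emph{not} a neighbourhood of those boundary points at which several cells meet, such as the corners $p_w$. Such a point $\xi$ is the limit of several distinct geodesic rays $\pi,\pi'$ that are nevertheless equivalent in the hyperbolic sense, and one must verify that the boundary value assigned above is independent of the chosen ray. This is where the horizontal edges and, once more, the condition $\lambda<1/3$ are essential: equivalent rays stay within bounded graph distance, so at level $n$ their vertices $x_n,x_n'$ are joined by a path of $O(1)$ edges lying near level $n$, each of conductance of order $(3\lambda)^{-n}$. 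Finiteness of the energy then gives $|u(x_n)-u(x_n')|\lesssim(3\lambda)^{n/2}\sqrt{\veps_n}$, where $\veps_n$ is the tail of the energy beyond level $n-O(1)$; since $3\lambda<1$ and $\veps_n\to0$, both rays produce the same limit. Combining the ray-wise convergence, the well-definedness at these junction points, and the uniform oscillation bound shows that $u$ extends to a continuous function on $\mybar X$, which completes the proof.
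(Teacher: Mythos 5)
Your proposal is correct and follows essentially the same route as the paper: the geometric decay of increments along vertical edges (coming precisely from $\lambda<1/3$) gives convergence along geodesic rays and an oscillation bound over subtrees, horizontal edges between vertices at bounded graph distance handle well-definedness at boundary points reached by several equivalent rays, and the cell structure of $K$ yields continuity of the extension. The only cosmetic difference is that you derive the increment decay by Cauchy--Schwarz against the tail energy, whereas the paper uses the cruder single-edge bound $\lvert u(x)-u(y)\rvert\le\sqrt{2C/c(x,y)}$ and sums the resulting geometric series.
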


\begin{proof}
Since $\hat{X}$ is homeomorphic to $\mybar{X}$, we consider $\hat{X}$ instead. For all $\xi\in\dd X$, take geodesic ray $[x_0,x_1,\ldots]$ with $|x_n|=n$, $x_n\sim x_{n+1}$ for all $n\ge0$ such that $x_n\to\xi$ in $\rho_a$. Then
$$\lvert u(x_n)-u(x_{n+1})\rvert\le\sqrt{\frac{2C}{c(x_n,x_{n+1})}}=\sqrt{2C}(\sqrt{3\lambda})^n,$$
since $\lambda<1/3$, we have $\myset{u(x_n)}$ is a Cauchy sequence, define $u(\xi)=\lim_{n\to+\infty}u(x_n)$.

First, we show that this is well-defined. Indeed, for all equivalent geodesic rays $[x_0,x_1,\ldots]$ and $[y_0,y_1,\ldots]$ with $|x_0|=|y_0|=0$, by \cite[Proposition 22.12(a)]{Wo00}, for all $n\ge0$, $d(x_n,y_n)\le2\delta$. Take an integer $M\ge 2\delta$, then for arbitrary fixed $n\ge0$, there exist $z_0=x_n,\ldots,z_M=y_n$ with $|z_i|=n$ for all $i=0,\ldots,M$, $z_i=z_{i+1}$ or $z_{i}\sim z_{i+1}$ for all $i=0,\ldots, M-1$, we have
$$\lvert u(x_n)-u(y_n)\rvert\le\sum_{i=0}^{M-1}|u(z_{i})-u(z_{i+1})|\le\sum_{i=0}^{M-1}\sqrt{\frac{2C}{c(z_i,z_{i+1})}}\le M\sqrt{\frac{2C}{\min{\myset{C_1,C_2}}}}(\sqrt{3\lambda})^n.$$
Since $\lambda<1/3$, letting $n\to+\infty$, we have $\lvert u(x_n)-u(y_n)\rvert\to0$, $u(\xi)$ is well-defined and
$$|u(\xi)-u(x_n)|\le\sum_{i=n}^\infty|u(x_i)-u(x_{i+1})|\le\sum_{i=n}^\infty\sqrt{2C}(\sqrt{3\lambda})^n=\frac{\sqrt{2C}}{1-\sqrt{3\lambda}}(\sqrt{3\lambda})^n.$$

Next, we show that the extended function $u$ is continuous on $\hat{X}$. We only need to show that for all sequence $\myset{\xi_n}\subseteq\dd X$ with $\xi_n\to\xi\in\dd X$ in $\rho_a$, we have $u(\xi_n)\to u(\xi)$. Since $\dd X$ with $\rho_a$ is H\"older equivalent to $K$ with Euclidean metric by Equation (\ref{eqn_Holder}), we use them interchangeably, $\myset{\xi_n}\subseteq K$ and $\xi_n\to\xi\in K$ in Euclidean metric.

For all $\veps>0$, there exists $M\ge1$ such that $(\sqrt{3\lambda})^M<\veps$. Take $w\in W_M$ such that $\xi\in K_w$, there are at most 12 numbers of $\tilde{w}\in W_M$, $\tilde{w}\ne w$ such that $\tilde{w}\sim w$, see Figure \ref{figure_nbhd}. Indeed, if we analyze geometric property of SG carefully, we will see there are at most 3.




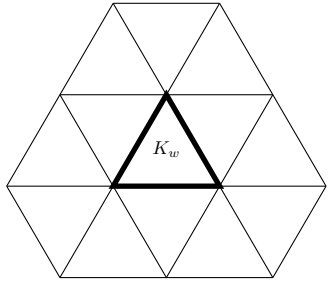
\begin{figure}[ht]
  \centering
  \begin{tikzpicture}[scale=0.7]
  \tikzstyle{every node}=[font=\small,scale=0.7]
  \draw (0,2*1.7320508076)--(2,2*1.7320508076);
  \draw (-1,1.7320508076)--(3,1.7320508076);
  \draw (-2,0)--(4,0);
  \draw (-1,-1.7320508076)--(3,-1.7320508076);
  
  \draw (-2,0)--(0,2*1.7320508076);
  \draw (-1,-1.7320508076)--(2,2*1.7320508076);
  \draw (1,-1.7320508076)--(3,1.7320508076);
  \draw (3,-1.7320508076)--(4,0);
  
  \draw (-2,0)--(-1,-1.7320508076);
  \draw (-1,1.7320508076)--(1,-1.7320508076);
  \draw (0,2*1.7320508076)--(3,-1.7320508076);
  \draw (2,2*1.7320508076)--(4,0);
  
  \draw[line width=2pt] (0,0)--(1,1.7320508076)--(2,0)--cycle;
  \draw (1,0.7) node {$K_w$};
  
  \end{tikzpicture}
  \caption{A neighborhood of $K_w$}\label{figure_nbhd}
\end{figure}



Let $U=\cup_{\tilde{w}:\tilde{w}\in W_M,\tilde{w}\sim w}K_{\tilde{w}}\cup K_w$, there exists $N\ge1$ for all $n>N$, $\xi_n\in U$. For all $n>N$. If $\xi_n\in K_w$, then
$$\lvert u(\xi_n)-u(\xi)\rvert\le\lvert u(\xi_n)-u(w)\rvert+\lvert u(\xi)-u(w)\rvert\le\frac{2\sqrt{2C}}{1-\sqrt{3\lambda}}(\sqrt{3\lambda})^M<\frac{2\sqrt{2C}}{1-\sqrt{3\lambda}}\veps.$$
If $\xi_n\in K_{\tilde{w}}$, $\tilde{w}\in K_M$, $\tilde{w}\sim w$, then
$$
\begin{aligned}
\lvert u(\xi_n)-u(\xi)\rvert&\le\lvert u(\xi_n)-u(\tilde{w})\rvert+\lvert u(\tilde{w})-u(w)\rvert+\lvert u(w)-u(\xi)\rvert\\
&\le\frac{2\sqrt{2C}}{1-\sqrt{3\lambda}}(\sqrt{3\lambda})^M+\sqrt{\frac{2C}{\min{\myset{C_1,C_2}}}}(\sqrt{3\lambda})^M\\
&<\left(\frac{2\sqrt{2C}}{1-\sqrt{3\lambda}}+\sqrt{\frac{2C}{\min{\myset{C_1,C_2}}}}\right)\veps.
\end{aligned}
$$
Hence
$$|u(\xi_n)-u(\xi)|<\left(\frac{2\sqrt{2C}}{1-\sqrt{3\lambda}}+\sqrt{\frac{2C}{\min{\myset{C_1,C_2}}}}\right)\veps,$$
for all $n>N$. $\lim_{n\to+\infty}u(\xi_n)=u(\xi)$. The extended function $u$ is continuous on $\hat{X}$.
\end{proof}

\begin{proof}[Proof of Theorem \ref{thm_main}]
Since $C_0(X)\subseteq\Ff_{\mybar{X}}$ is dense in $L^2({\mybar{X}};m)$, we have $\Ee_{\mybar{X}}$ is a symmetric form on $L^2({\mybar{X}};m)$.

We show closed property of $\Ee_{\mybar{X}}$. Let $\myset{u_k}\subseteq\Ff_{\mybar{X}}$ be an $(\Ee_{\mybar{X}})_1$-Cauchy sequence. Then there exists $u\in L^2(\mybar{X};m)$ such that $u_k\to u$ in $L^2(\mybar{X};m)$, hence $u_k(x)\to u(x)$ for all $x\in X$. By Fatou's lemma, we have
$$
\begin{aligned}
\frac{1}{2}&\sum_{x,y\in X}c(x,y)\left((u_k-u)(x)-(u_k-u)(y)\right)^2\\
&=\frac{1}{2}\sum_{x,y\in X}c(x,y)\lim_{l\to+\infty}\left((u_k-u_l)(x)-(u_k-u_l)(y)\right)^2\\
&\le\varliminf_{l\to+\infty}\frac{1}{2}\sum_{x,y\in X}c(x,y)\left((u_k-u_l)(x)-(u_k-u_l)(y)\right)^2\\
&=\varliminf_{l\to+\infty}\Ee_{\mybar{X}}(u_k-u_l,u_k-u_l).
\end{aligned}
$$
Letting $k\to+\infty$, we have
$$\frac{1}{2}\sum_{x,y\in X}c(x,y)\left((u_k-u)(x)-(u_k-u)(y)\right)^2\to0,$$
and
$$\frac{1}{2}\sum_{x,y\in X}c(x,y)\left(u(x)-u(y)\right)^2<+\infty.$$
By Lemma \ref{lem_ext}, $u$ can be extended continuously to $\mybar{X}$, hence $u\in C(\mybar{X})$, $u\in\Ff_{\mybar{X}}$. $\Ee_{\mybar{X}}$ is closed.

It is obvious that $\Ee_{\mybar{X}}$ is Markovian. Hence $\Ee_{\mybar{X}}$ is a Dirichlet form on $L^2(\mybar{X};m)$.

Since ${\mybar{X}}$ is compact, we have $C_0({\mybar{X}})=C({\mybar{X}})$. To show $\Ee_{\mybar{X}}$ is regular, we need to show $C_0({\mybar{X}})\cap\Ff_{\mybar{X}}=C({\mybar{X}})\cap\Ff_{\mybar{X}}=\Ff_{\mybar{X}}$ is $(\Ee_{\mybar{X}})_1$-dense in $\Ff_{\mybar{X}}$ and uniformly dense in $C_0({\mybar{X}})=C({\mybar{X}})$. $\Ff_{\mybar{X}}$ is trivially $(\Ee_{\mybar{X}})_1$-dense in $\Ff_{\mybar{X}}$. We need to show that $\Ff_{\mybar{X}}$ is uniformly dense in $C({\mybar{X}})$. Since $\mybar{X}$ is compact, we have $\Ff_{\mybar{X}}$ is a sub algebra of $C(\mybar{X})$. By Stone-Weierstrass theorem, we only need to show that $\Ff_{\mybar{X}}$ separates points. The idea of our proof is from classical construction of local regular Dirichlet form on SG.

For all $p,q\in\mybar{X}$ with $p\ne q$, we only need to show that there exists $v\in\Ff_{\mybar{X}}$ such that $v(p)\ne v(q)$. If $p\in X$, then let $v(p)=1$ and $v(x)=0$ for all $x\in X\backslash\myset{p}$, then
$$\sum_{x,y\in X}c(x,y)(v(x)-v(y))^2<+\infty.$$
By Lemma \ref{lem_ext}, $v$ can be extended to a function in $C(\mybar{X})$, still denoted by $v$, hence $v\in\Ff_{\mybar{X}}$. Moreover, $v(q)=0\ne1=v(p)$. If $q\in X$, then we have the proof similar to the above.

If $p,q\in\mybar{X}\backslash X=\dd X=K$, then there exists sufficiently large $m\ge1$ and $w^{(1)},w^{(2)}\in S_m$ with $p\in K_{w^{(1)}}$, $q\in K_{w^{(2)}}$ and $K_{w^{(1)}}\cap K_{w^{(2)}}=\emptyset$, hence $w^{(1)}\not\sim w^{(2)}$. Let $v=0$ in $B_m$ and
$$v(w^{(1)}0)=v(w^{(1)}1)=v(w^{(1)}2)=1.$$
For all $w\in S_{m+1}\backslash\myset{w^{(1)}0,w^{(1)}1,w^{(1)}2}$, let
$$
v(w)=
\begin{cases}
1,&\text{if }w\sim w^{(1)}0\text{ or }w\sim w^{(1)}1\text{ or }w\sim w^{(1)}2,\\
0,&\text{otherwise},
\end{cases}
$$
then
$$v(w^{(2)}0)=v(w^{(2)}1)=v(w^{(2)}2)=0.$$
In the summation $\sum_{x,y\in S_{m+1}}c(x,y)(v(x)-v(y))^2$, horizontal edges of type \Rmnum{2} make no contribution since $v$ takes same values at end nodes of each such edge. Assume that we have constructed $v$ on $B_n$ such that in the summation $\sum_{x,y\in S_i}c(x,y)(v(x)-v(y))^2$, $i=m+1,\ldots,n$, horizontal edges of type \Rmnum{2} make no contribution, that is, $v$ takes same values at end nodes of each edge. We construct $v$ on $S_{n+1}$ as follows.

Consider $\sum_{x,y\in S_n}c(x,y)(v(x)-v(y))^2$, nonzero terms all come from edges of smallest triangles in $S_n$. Pick up one such triangle in $S_n$, it generates three triangles in $S_{n+1}$, nine triangles in $S_{n+2}$, \ldots. See Figure \ref{figure_gene}.




\begin{figure}[ht]
  \centering
  \begin{tikzpicture}
  \draw (0,0)--(3,0);
  \draw (3,0)--(3/2,3/2*1.7320508076);
  \draw (0,0)--(3/2,3/2*1.7320508076);
  
  \draw (6,0)--(9,0);
  \draw (9,0)--(7.5,3/2*1.7320508076);
  \draw (6,0)--(7.5,3/2*1.7320508076);
  
  \draw (6.5,1/2*1.7320508076)--(7,0);
  \draw (8.5,1/2*1.7320508076)--(8,0);
  \draw (7,1.7320508076)--(8,1.7320508076);
  
  \draw[fill=black] (0,0) circle (0.06);
  \draw[fill=black] (3,0) circle (0.06);
  \draw[fill=black] (3/2,3/2*1.7320508076) circle (0.06);
  \draw[fill=black] (6,0) circle (0.06);
  \draw[fill=black] (7,0) circle (0.06);
  \draw[fill=black] (8,0) circle (0.06);
  \draw[fill=black] (9,0) circle (0.06);
  \draw[fill=black] (6.5,1.7320508076/2) circle (0.06);
  \draw[fill=black] (8.5,1.7320508076/2) circle (0.06);
  \draw[fill=black] (7,1.7320508076) circle (0.06);
  \draw[fill=black] (8,1.7320508076) circle (0.06);
  \draw[fill=black] (7.5,3/2*1.7320508076) circle (0.06);
  
  \draw (0,-0.3) node {$a$};
  \draw (3,-0.3) node {$b$};
  \draw (1.5,3/2*1.7320508076+0.3) node {$c$};
  
  \draw (6,-0.3) node {$a$};
  \draw (9,-0.3) node {$b$};
  \draw (7.5,3/2*1.7320508076+0.3) node {$c$};
  
  \draw (7,-0.3) node {$x$};
  \draw (8,-0.3) node {$x$};
  
  \draw (6.3,1/2*1.7320508076) node {$z$};
  \draw (6.8,1.7320508076) node {$z$};
  
  \draw (8.7,1/2*1.7320508076) node {$y$};
  \draw (8.2,1.7320508076) node {$y$};
  
  \end{tikzpicture}
  \caption{Generation of triangles}\label{figure_gene}
\end{figure}
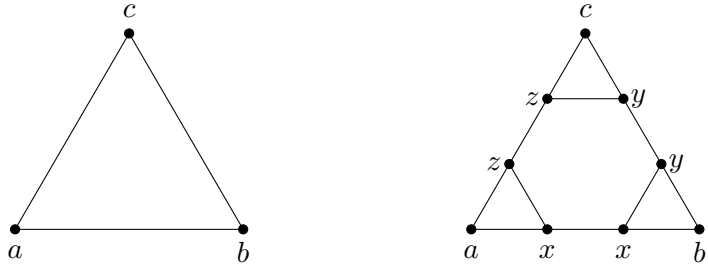



We only need to assign values of $v$ on the three triangles in $S_{n+1}$ from the values of $v$ on the triangle in $S_n$. As in Figure \ref{figure_gene}, $x,y,z$ are values of $v$ at corresponding nodes to be determined from $a,b,c$. The contribution of this one triangle in $S_n$ to $\sum_{x,y\in S_n}c(x,y)(v(x)-v(y))^2$ is
$$A_1=\frac{C_1}{(3\lambda)^n}\left[(a-b)^2+(b-c)^2+(a-c)^2\right].$$
The contribution of these three triangles in $S_{n+1}$ to $\sum_{x,y\in S_{n+1}}c(x,y)(v(x)-v(y))^2$ is
$$
\begin{aligned}
A_2=\frac{C_1}{(3\lambda)^{n+1}}&\left[(a-x)^2+(a-z)^2+(x-z)^2\right.\\
&+(b-x)^2+(b-y)^2+(x-y)^2\\
&\left.+(c-y)^2+(c-z)^2+(y-z)^2\right].\\
\end{aligned}
$$
Consider $A_2$ as a function of $x,y,z$, by elementary calculation, $A_2$ takes minimum value when
$$
\begin{cases}
x=\frac{2a+2b+c}{5},\\
y=\frac{a+2b+2c}{5},\\
z=\frac{2a+b+2c}{5},
\end{cases}
$$
and
$$
\begin{aligned}
A_2&=\frac{C_1}{(3\lambda)^{n+1}}\cdot\frac{3}{5}\left[(a-b)^2+(b-c)^2+(a-c)^2\right]\\
&=\frac{1}{5\lambda}\left(\frac{C_1}{(3\lambda)^n}\left[(a-b)^2+(b-c)^2+(a-c)^2\right]\right)=\frac{1}{5\lambda}A_1.
\end{aligned}
$$
By construction, horizontal edges of type \Rmnum{2} in $S_{n+1}$ make no contribution to
$$\sum_{x,y\in S_{n+1}}c(x,y)(v(x)-v(y))^2$$
and
$$\sum_{x,y\in S_{n+1}}c(x,y)(v(x)-v(y))^2=\frac{1}{5\lambda}\sum_{x,y\in S_{n}}c(x,y)(v(x)-v(y))^2.$$
Since $\lambda>1/5$, we have
$$\sum_{n=0}^\infty\sum_{x,y\in S_{n}}c(x,y)(v(x)-v(y))^2<+\infty,$$
this is the contribution of all horizontal edges to $\sum_{x,y\in X}c(x,y)(v(x)-v(y))^2$. We consider the contribution of all vertical edges. For all $n\ge m$, by construction $v|_{S_{n+1}}$ is uniquely determined by $v|_{S_n}$, hence the contribution of vertical edges between $S_n$ and $S_{n+1}$ is uniquely determined by $v|_{S_n}$. As above, we pick one smallest triangle in $S_n$ and consider the contribution of the vertical edges connecting it to $S_{n+1}$. There are nine vertical edges between $S_n$ and $S_{n+1}$ connecting this triangle. These nine vertical edges make contribution
$$
\begin{aligned}
A_3&=\frac{1}{(3\lambda)^n}\left[(a-x)^2+(a-z)^2+(a-a)^2\right.\\
&+(b-x)^2+(b-y)^2+(b-b)^2\\
&+(c-y)^2+(c-z)^2+(c-c)^2\left.\right]\\
&=\frac{14}{25C_1}\left(\frac{C_1}{(3\lambda)^n}\left[(a-b)^2+(b-c)^2+(a-c)^2\right]\right)=\frac{14}{25C_1}A_1.
\end{aligned}
$$
Hence
$$\sum_{x\in S_n,y\in S_{n+1}}c(x,y)(v(x)-v(y))^2=\frac{14}{25C_1}\sum_{x,y\in S_n}c(x,y)(v(x)-v(y))^2,$$
and
$$\sum_{n=0}^\infty\sum_{x\in S_n,y\in S_{n+1}}c(x,y)(v(x)-v(y))^2<+\infty\Leftrightarrow\sum_{n=0}^\infty\sum_{x,y\in S_n}c(x,y)(v(x)-v(y))^2<+\infty.$$
Since $\lambda>1/5$, we have both summations converge and
$$\sum_{x,y\in X}c(x,y)(v(x)-v(y))^2<+\infty.$$
By Lemma \ref{lem_ext}, $v$ can be extended to a function in $C(\mybar{X})$, still denoted by $v$, hence $v\in\Ff_{\mybar{X}}$. Since $v$ is constructed by convex interpolation on $X\backslash B_{m+1}$, we have $v(p)=1\ne0=v(q)$.

Therefore, $(\Ee_{\mybar{X}},\Ff_{\mybar{X}})$ is a regular Dirichlet form on $L^2({\mybar{X}};m)$.
\end{proof}

\begin{mythm}
$\Ee_{\mybar{X}}$ on $L^2({\mybar{X}};m)$ is a regular representation of $\Ee^\rref$ on $L^2(X;m)$.
\end{mythm}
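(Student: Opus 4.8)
The plan is to deduce the theorem directly from the definition of a regular representation, the substance of the argument being already contained in Lemma \ref{lem_ext}. Recall from \cite{Fuk71} that a regular Dirichlet form $(\Ee_2,\Ff_2)$ on $L^2(E_2;m_2)$ is a regular representation of a Dirichlet form $(\Ee_1,\Ff_1)$ on $L^2(E_1;m_1)$ if there is an algebraic isomorphism $\Phi$ from $\Ff_1\cap L^\infty(E_1;m_1)$ onto $\Ff_2\cap L^\infty(E_2;m_2)$ preserving the pointwise product, the $L^2$-inner product, and the energy, namely
$$\Phi(uv)=(\Phi u)(\Phi v),\quad (u,v)_{L^2(m_1)}=(\Phi u,\Phi v)_{L^2(m_2)},\quad \Ee_1(u,v)=\Ee_2(\Phi u,\Phi v).$$
Since $(\Ee_{\mybar X},\Ff_{\mybar X})$ is regular by Theorem \ref{thm_main}, it suffices to exhibit such a $\Phi$ between $(\Ee^\rref,\Ff^\rref_a)$ on $L^2(X;m)$ and $(\Ee_{\mybar X},\Ff_{\mybar X})$ on $L^2(\mybar X;m)$.

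The first point I would record is that $m$ is carried by the countable set $X$ and assigns no mass to $\dd X=\mybar X\setminus X$, so a function on $\mybar X$ is determined $m$-a.e.\ by its restriction to $X$ and $L^2(\mybar X;m)$ is canonically identified with $L^2(X;m)$. Moreover every $u\in\Ff_{\mybar X}$ is continuous on the compact space $\mybar X$, hence bounded and in $L^2(\mybar X;m)$, so $\Ff_{\mybar X}\cap L^\infty=\Ff_{\mybar X}$; conversely, by Lemma \ref{lem_ext}, each $u\in\Ff^\rref_a$ extends continuously to $\mybar X$ and is therefore bounded as well, so $\Ff^\rref_a\cap L^\infty=\Ff^\rref_a$.

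I would then take $\Phi\colon\Ff^\rref_a\to\Ff_{\mybar X}$ to be the continuous-extension map supplied by Lemma \ref{lem_ext}, with inverse the restriction to $X$. These are mutually inverse bijections because $X$ is dense in $\mybar X$ (so a continuous function on $\mybar X$ is determined by its values on $X$) and the continuous extension is unique. The three identities are then routine. The energy identity is immediate, since both forms are literally the same expression $\tfrac12\sum_{x,y\in X}c(x,y)(u(x)-u(y))^2$, which depends only on the values on $X$. The $L^2$ identity follows from $m(\dd X)=0$, giving $\int_{\mybar X}(\Phi u)(\Phi v)\,\md m=\int_X uv\,\md m$. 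Finally $\Phi$ is an algebra homomorphism because the continuous extension of a product equals the product of the continuous extensions, as both are continuous on $\mybar X$ and agree on the dense set $X$; the same denseness shows $\Phi$ preserves the uniform norm.

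The only place where genuine content is hidden is the well-definedness and surjectivity of $\Phi$, which rest entirely on Lemma \ref{lem_ext} and hence on the hypothesis $\lambda<1/3$: without continuous extendability to the Martin boundary the extension would not land in $\Ff_{\mybar X}$. I expect this to be the main (and indeed the only nontrivial) obstacle, but it has already been dispatched in Lemma \ref{lem_ext}; everything else is bookkeeping. Combining the isomorphism $\Phi$ with the regularity of $(\Ee_{\mybar X},\Ff_{\mybar X})$ from Theorem \ref{thm_main} then yields that $(\Ee_{\mybar X},\Ff_{\mybar X})$ on $L^2(\mybar X;m)$ is a regular representation of $(\Ee^\rref,\Ff^\rref_a)$ on $L^2(X;m)$.
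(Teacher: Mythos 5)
Your proposal is correct and follows essentially the same route as the paper: both take $\Phi$ to be the continuous-extension map furnished by Lemma \ref{lem_ext}, observe that the two forms have the literally identical energy expression, and use $m(\dd X)=0$ to identify the $L^2$ and $L^\infty$ structures. Your additional remarks (that every element of $\Ff^\rref_a$ is automatically bounded, and that restriction to the dense set $X$ inverts $\Phi$) are correct elaborations of what the paper leaves implicit.
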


Regular representation theory was developed by Fukushima \cite{Fuk71} and incorporated into his book \cite[Appendix, A4]{FOT11}.
\begin{proof}
We only need to construct an algebraic isomorphism $\Phi:(\Ff^\rref_a)_b\to(\Ff_{\mybar{X}})_b$ such that for all $u\in(\Ff^\rref_a)_b$
\begin{equation}\label{eqn_iso}
\lVert u\rVert_{L^\infty(X;m)}=\lVert\Phi(u)\rVert_{L^\infty(\mybar{X};m)},(u,u)_X=(\Phi(u),\Phi(u))_{\mybar{X}}, \Ee^\rref(u,u)=\Ee_{\mybar{X}}(\Phi(u),\Phi(u)).
\end{equation}
Indeed, for all $u\in(\Ff^\rref_a)_b$, we have $\sum_{x,y\in X}c(x,y)(u(x)-u(y))^2<+\infty$, by Lemma \ref{lem_ext}, define $\Phi(u)$ as the continuous extension of $u$ to ${\mybar{X}}$. Since $\Ee^\rref$, $\Ee_{\mybar{X}}$ have the same expression for energy and $m(\dd X)=0$, Equation (\ref{eqn_iso}) is obvious.
\end{proof}

Moreover we have

\begin{mythm}\label{thm_part}
$(\Ee_X,\Ff_X)$ on $L^2(X;m)$ is part form on $X$ of $(\Ee_{\mybar{X}},\Ff_{\mybar{X}})$ on $L^2({\mybar{X}};m)$.
\end{mythm}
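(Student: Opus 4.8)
The plan is to recognize the part form directly from its standard description and then match it with $(\Ee_X,\Ff_X)$. Recall that $X$ is an open subset of its Martin compactification $\mybar{X}$: every vertex is isolated and $\dd X=\mybar{X}\backslash X$ is closed, so the part of the regular Dirichlet form $(\Ee_{\mybar{X}},\Ff_{\mybar{X}})$ on the open set $X$ is well defined. By \cite[Theorem 4.4.3]{FOT11} it is the regular Dirichlet form $(\Ee^X,\Ff^X)$ on $L^2(X;m)$ given by $\Ee^X=\Ee_{\mybar{X}}$ and $\Ff^X=\myset{u\in\Ff_{\mybar{X}}:\tilde{u}=0\text{ q.e. on }\dd X}$; moreover $C_c(X)\cap\Ff_{\mybar{X}}$ is a core for it, that is, $\Ff^X$ is the $(\Ee_{\mybar{X}})_1$-closure of $C_c(X)\cap\Ff_{\mybar{X}}$.

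First I would identify this core explicitly. Since $X$ is discrete, $C_c(X)=C_0(X)$ is exactly the set of finitely supported functions. Any such $v$ has finite energy, the sum defining $\Ee_{\mybar{X}}(v,v)$ being a finite sum, so by Lemma \ref{lem_ext} it extends continuously to $\mybar{X}$; as $v$ vanishes outside a finite set, its continuous extension is identically $0$ on $\dd X$. Hence $C_0(X)\subseteq\Ff_{\mybar{X}}$, every element of $C_0(X)$ vanishes (everywhere, in particular q.e.) on $\dd X$, and therefore $C_c(X)\cap\Ff_{\mybar{X}}=C_0(X)$.

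Next I would check that the two graph norms coincide on this common core. Because $m(\dd X)=0$, we have $L^2(X;m)=L^2(\mybar{X};m)$ as Hilbert spaces, so the $L^2$ inner products of two finitely supported functions agree; and $\Ee_X$ and $\Ee_{\mybar{X}}$ are given by the identical expression $\frac12\sum_{x,y\in X}c(x,y)(u(x)-u(y))^2$. Thus $(\Ee_X)_1=(\Ee_{\mybar{X}})_1$ on $C_0(X)$. Since $\Ff_X$ is by definition the $(\Ee_X)_1$-closure of $C_0(X)$, while $\Ff^X$ is the $(\Ee_{\mybar{X}})_1$-closure of the same set $C_0(X)$, the two domains coincide, and the forms agree because their expressions do. This identifies $(\Ee_X,\Ff_X)$ with the part form $(\Ee^X,\Ff^X)$ on $X$.

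The only genuinely delicate point is the identification of the core of the part form: one must confirm that $C_c(X)\cap\Ff_{\mybar{X}}$ really reduces to $C_0(X)$ and that these functions vanish \emph{q.e.}\ (and not merely $m$-a.e.) on $\dd X$ with respect to the capacity associated with $\Ee_{\mybar{X}}$, which is precisely where Lemma \ref{lem_ext} and the continuity of the members of $\Ff_{\mybar{X}}$ enter. Everything else is a routine comparison of the closures of a common core under identical norms.
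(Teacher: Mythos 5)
Your proof is correct and follows essentially the same route as the paper: both identify the core of the part form on the open set $X$ as $C_0(X)$ (you via \cite[Theorem 4.4.3]{FOT11}, the paper via the special-standard-core statement \cite[Theorem 3.3.9]{CF12}) and then observe that $\Ff_X$ and the part form domain are closures of this same core under the same $(\Ee)_1$-norm. The extra care you take about $C_c(X)\cap\Ff_{\mybar{X}}=C_0(X)$ and q.e.\ vanishing on $\dd X$ is sound and only makes explicit what the paper leaves implicit.
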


\begin{proof}
By \cite[Theorem 3.3.9]{CF12}, since $X\subseteq{\mybar{X}}$ is an open subset and $\Ff_{\mybar{X}}$ is a special standard core of $(\Ee_{\mybar{X}},\Ff_{\mybar{X}})$ on $L^2(\mybar{X};m)$, we have
$$(\Ff_{\mybar{X}})_X=\myset{u\in\Ff_{\mybar{X}}:\mathrm{supp}(u)\subseteq X}=\myset{u\in\Ff_{\mybar{X}}:u\in C_0(X)}=C_0(X).$$
Since $\Ff_X$ is the $(\Ee_X)_1$-closure of $C_0(X)$, we have part form of $(\Ee_{\mybar{X}},\Ff_{\mybar{X}})$ on $L^2({\mybar{X}};m)$ on $X$ is exactly $(\Ee_X,\Ff_X)$ on $L^2(X;m)$.
\end{proof}

From probabilistic viewpoint, $(\Ee_X,\Ff_X)$ on $L^2(X;m)$ corresponds to absorbed process $\myset{X_t}$ and $(\Ee_{\mybar{X}},\Ff_{\mybar{X}})$ on $L^2({\mybar{X}};m)$ corresponds to reflected process $\myset{\mybar{X}_t}$. By \cite[Theorem 3.3.8]{CF12}, $\myset{X_t}$ is part process of $\myset{\mybar{X}_t}$ on $X$ which can be described as follows.

Let
$$\tau_X=\inf{\myset{t>0:\mybar{X}_t\notin X}}=\inf{\myset{t>0:\mybar{X}_t\in\dd X}}=\sigma_{\dd X},$$
then
$$X_t=
\begin{cases}
\mybar{X}_t,&0\le t<\tau_X,\\
\dd,&t\ge\tau_X,
\end{cases}
$$
and
$$\zeta=\tau_X=\sigma_{\dd X}.$$

\section{Trace Form on $\dd X$}\label{sec_trace}

In this section, we take trace of the regular Dirichlet form $(\Ee_{\mybar{X}},\Ff_{\mybar{X}})$ on $L^2(\mybar{X};m)$ to $K$ to have a regular Dirichlet form $(\Ee_K,\Ff_K)$ on $L^2(K;\nu)$ with the form (\ref{eqn_E_K}).

First, we show that $\nu$ is of finite energy with respect to $\Ee_{\mybar{X}}$, that is, 
\begin{equation*}
\int_{\mybar{X}}\lvert u(x)\rvert\nu(\md x)\le C\sqrt{(\Ee_{\mybar{X}})_1(u,u)}\text{ for all }u\in\Ff_{\mybar{X}}\cap C_0({\mybar{X}})=\Ff_{\mybar{X}},
\end{equation*}
where $C$ is some positive constant. Since $\nu(\dd X)=1$, we only need to show that
\begin{mythm}\label{thm_trace}
\begin{equation}\label{eqn_trace}
\left(\int_{\mybar{X}}\lvert u(x)\rvert^2\nu(\md x)\right)^{1/2}\le C\sqrt{(\Ee_{\mybar{X}})_1(u,u)}\text{ for all }u\in\Ff_{\mybar{X}}.
\end{equation}
\end{mythm}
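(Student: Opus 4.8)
The plan is to prove the stronger estimate $\int_K|u|^2\,\md\nu\le C(\Ee_{\mybar{X}})_1(u,u)$ for all $u\in\Ff_{\mybar{X}}$; since $\nu$ is supported on $\dd X=K$ and $\nu(K)=1$, this is exactly (\ref{eqn_trace}). First I would discretize the left-hand side along the spheres $S_n$. For $w\in S_n$ the cell $K_w$ satisfies $\nu(K_w)=3^{-n}$, the cells $\{K_w\}_{w\in S_n}$ cover $K$ with pairwise intersections of $\nu$-measure zero, and for $\nu$-a.e.\ $\xi\in K_w$ the vertex $w$ is the level-$n$ vertex of a geodesic ray from $o$ to $\xi$, so $w\to\xi$ in $\mybar{X}$. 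Since $u\in C(\mybar{X})$ is bounded, dominated convergence (or the quantitative estimate in Lemma \ref{lem_ext}) gives
$$\int_K|u-u(o)|^2\,\md\nu=\lim_{n\to+\infty}3^{-n}\sum_{w\in S_n}|u(w)-u(o)|^2.$$
It thus suffices to bound the right-hand side uniformly in $n$ by the vertical part of $\Ee_{\mybar{X}}(u,u)$.

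The key step is a weighted telescoping estimate along the tree. For $w=w_1\ldots w_n\in S_n$ write $w|_k=w_1\ldots w_k$ for its prefix of length $k$, so that $u(w)-u(o)=\sum_{k=0}^{n-1}\big(u(w|_{k+1})-u(w|_k)\big)$ telescopes along the vertical edges of the geodesic from $o$ to $w$. Applying the Cauchy--Schwarz inequality with the geometric weights $\lambda^k$ and using that the vertical edge between $w|_k$ and $w|_{k+1}$ has conductance $(3\lambda)^{-k}$, I obtain
$$|u(w)-u(o)|^2\le\Big(\sum_{k=0}^{n-1}\lambda^k\Big)\sum_{k=0}^{n-1}\frac{(u(w|_{k+1})-u(w|_k))^2}{\lambda^k}\le\frac{1}{1-\lambda}\sum_{k=0}^{n-1}3^k(3\lambda)^{-k}(u(w|_{k+1})-u(w|_k))^2,$$
where I used $\lambda^{-k}=3^k(3\lambda)^{-k}$ so that each summand is $3^k$ times the energy of a single vertical edge. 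The weight $\lambda^k$ is chosen precisely so that $\sum_k\lambda^k$ converges (here $\lambda<1/3<1$) while the factor $\lambda^{-k}$ reconstitutes the true edge conductance.

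Summing over $w\in S_n$ and exchanging the order of summation, I would group the terms by the prefix $v=w|_{k+1}\in S_{k+1}$: exactly $3^{\,n-k-1}$ vertices $w\in S_n$ share a given prefix $v\in S_{k+1}$, so
$$3^{-n}\sum_{w\in S_n}|u(w)-u(o)|^2\le\frac{1}{1-\lambda}\,3^{-n}\sum_{k=0}^{n-1}3^k\,3^{\,n-k-1}\sum_{v\in S_{k+1}}(3\lambda)^{-k}(u(v)-u(v^-))^2.$$
Here the three powers of $3$ collapse to the constant $3^{-1}$, independent of $n$ and $k$, and the inner sum over $v\in S_{k+1}$ is exactly the energy carried by the vertical edges between $S_k$ and $S_{k+1}$. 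Hence the whole right-hand side is at most $\frac{1}{3(1-\lambda)}$ times the total vertical energy, which is at most $\Ee_{\mybar{X}}(u,u)$. Letting $n\to+\infty$ gives $\int_K|u-u(o)|^2\,\md\nu\le\frac{1}{3(1-\lambda)}\Ee_{\mybar{X}}(u,u)$. Finally, by the triangle inequality in $L^2(K;\nu)$ and $\nu(K)=1$,
$$\Big(\int_K|u|^2\,\md\nu\Big)^{1/2}\le\Big(\int_K|u-u(o)|^2\,\md\nu\Big)^{1/2}+|u(o)|,$$
and since $m(o)=1$ and $m(\dd X)=0$ we have $|u(o)|\le\lVert u\rVert_{L^2(\mybar{X};m)}\le\sqrt{(\Ee_{\mybar{X}})_1(u,u)}$, which yields (\ref{eqn_trace}) with $C=\big(3(1-\lambda)\big)^{-1/2}+1$.

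The main obstacle is the second step: choosing the right weights in the Cauchy--Schwarz inequality so that, after summing over $S_n$ and counting descendants, the exponential factors $3^{-n}$, $3^{k}$, $3^{\,n-k-1}$ and $\lambda^{\pm k}$ all balance to leave a convergent geometric series dominated by the energy. Everything else is bookkeeping, together with the approximation argument of the first step, which relies only on the continuity of $u$ on the compactification and on $\nu(K_w)=3^{-n}$. Note that only the vertical edges enter the estimate, so the entire horizontal energy is simply discarded.
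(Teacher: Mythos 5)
Your argument is correct, but it is a genuinely different proof from the one in the paper. The paper works probabilistically: it identifies $\int_{\dd X}|u|^2\md\nu$ with $\lim_n\EE_o[u(Z_n)^2]$ via the convergence theorem of Ancona--Lyons--Peres, splits $u|_X$ by the Royden decomposition into a harmonic Dirichlet part (handled by a martingale orthogonality computation) and a $\DD_0$ part (handled by a superharmonic majorant from \cite{ALP99}), and controls everything through $G(o,o)$ and $F(x,o)$. You instead discretize $\int_K|u-u(o)|^2\md\nu$ as a Riemann sum $3^{-n}\sum_{w\in S_n}|u(w)-u(o)|^2$ (legitimate, since $\nu(K_w)=3^{-n}$, overlaps are $\nu$-null, every $\xi\in K_w$ lies in the subtree rooted at $w$ so $\rho_a(w,\xi)=e^{-an}$, and $u$ is uniformly continuous on the compact $\mybar{X}$), then telescope along the vertical geodesic from $o$ to $w$ and apply Cauchy--Schwarz with weights $\lambda^k$; the identity $\lambda^{-k}=3^k(3\lambda)^{-k}$ together with the descendant count $3^{n-k-1}$ makes all powers of $3$ cancel, leaving $\frac{1}{3(1-\lambda)}$ times the vertical energy. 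I checked the bookkeeping ($3^{-n}\cdot3^k\cdot3^{n-k-1}=3^{-1}$, the edge conductance $c(k,k+1)=(3\lambda)^{-k}$, and the final Minkowski step using $m(o)=1$) and it is sound. What each approach buys: yours is elementary, entirely deterministic, yields an explicit constant $C=(3(1-\lambda))^{-1/2}+1$, and discards the horizontal energy altogether; but it leans on the exact self-similar structure of the Sierpi\'nski graph (equal branching $3$, geometric conductances, $\nu(K_w)=3^{-n}$). The paper's route is heavier but transfers verbatim to any transient reversible random walk with $\sum_x m(x)F(x,o)<+\infty$, which is in the spirit of the authors' stated aim of extending the method to a larger family of fractals.
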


We need some preparation.
\begin{mythm}(\cite[Theorem 1.1]{ALP99})\label{thm_ALP}
Suppose that a reversible random walk $\myset{Z_n}$ is transient, then for all $f$ with
$$D(f)=\frac{1}{2}\sum_{x,y\in X}c(x,y)(f(x)-f(y))^2<+\infty,$$
we have $\myset{f(Z_n)}$ converges almost surely and in $L^2$ under $\PP_x$ for all $x\in X$.
\end{mythm}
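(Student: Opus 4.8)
The plan is to prove the convergence through an \emph{energy crossing inequality} that bounds the number of oscillations of $f$ along the trajectory by the Dirichlet energy $D(f)$, rather than through the na\"ive Doob--Dynkin martingale $M_n=f(Z_n)-f(Z_0)-\sum_{k=0}^{n-1}(P-I)f(Z_k)$. The martingale route is worth discarding for an instructive reason: its convergence would require $L^2$-boundedness of $M_n$, that is, finiteness of $\EE_x\langle M\rangle_\infty=\sum_y G(x,y)\,\mathrm{Var}_y(f(Z_1))$, a sum which weights the local energy of $f$ by the Green function $g(x,y)=G(x,y)/\pi(y)$ rather than by the reversing measure $\pi$. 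Since $g(x,\cdot)$ need not be bounded on the edges where $f$ varies, this quantity can diverge even when $D(f)<+\infty$; it is $\pi$, not the Green function, that pairs correctly with $D(f)$. Hence an argument bypassing the quadratic variation is needed.

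First I would reduce to the case $0\le f\le 1$. Since $1$-Lipschitz maps contract the Dirichlet energy, for any $a<b$ the rescaled truncation $\phi=\big((f\wedge b)\vee a-a\big)/(b-a)$ satisfies $0\le\phi\le1$ and $D(\phi)\le D(f)/(b-a)^2$, with $\phi\equiv0$ on $\myset{f\le a}$ and $\phi\equiv1$ on $\myset{f\ge b}$. An upcrossing of $[a,b]$ by $\myset{f(Z_n)}$ is precisely an excursion of the walk from $\myset{\phi=0}$ to $\myset{\phi=1}$. The heart of the argument is then the estimate that the expected number $\EE_x[U_{[a,b]}]$ of such excursions is controlled by $D(\phi)$, and in fact that the probability of at least $k$ upcrossings decays geometrically in $k$. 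Here reversibility is essential: the expected number of traversals of a directed edge $(u,v)$ by the walk run to infinity equals $g(x,u)\,c(u,v)$, so the reversing measure supplies exactly the weighting needed to pair the Green function against the conductances. Combining this with the variational (effective-resistance) interpretation of $D(\phi)$ and with optional stopping at the successive hitting times of $\myset{\phi=0}$ and $\myset{\phi=1}$ should yield the bound.

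Granting the crossing estimate, almost sure convergence follows at once: with $\PP_x$-probability one $\myset{f(Z_n)}$ performs only finitely many upcrossings of every interval $[a,b]$ with rational endpoints, whence $\liminf_n f(Z_n)=\limsup_n f(Z_n)$ and $f(Z_n)$ converges $\PP_x$-a.s. For the $L^2$ statement I would first treat the bounded case: when $|f|\le1$, the a.s.\ limit together with the uniform bound gives convergence in $L^2(\PP_x)$ by dominated convergence. For general $f$ with $D(f)<+\infty$, put $f_N=(f\wedge N)\vee(-N)$; then $D(f-f_N)\to0$ as $N\to+\infty$, each $f_N(Z_n)$ converges in $L^2(\PP_x)$ by the bounded case, and a uniform tail bound $\sup_n\EE_x\big[(f-f_N)(Z_n)^2\big]\to0$, extracted from the same energy estimates applied to the truncation remainders, upgrades this to $L^2$-convergence of $f(Z_n)$ itself.

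I expect the crossing estimate to be the main obstacle. The delicate point is to produce a bound genuinely in terms of $D(f)/(b-a)^2$ despite the possibly unbounded Green function; this is exactly where the reversing-measure weighting must be exploited so that the Green-function factors telescope against the conductances rather than accumulating. The secondary technical difficulty is the uniform integrability required for the $L^2$ conclusion in the unbounded case, i.e.\ establishing the tail estimate $\sup_n\EE_x[(f-f_N)(Z_n)^2]\to0$ uniformly in $n$ from the energy of the remainder alone.
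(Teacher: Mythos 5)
Your proposal cannot be checked against an internal argument, because the paper never proves Theorem \ref{thm_ALP}: the statement is imported verbatim from \cite[Theorem 1.1]{ALP99}, and the strategy you describe --- truncate $f$, then bound the expected number of upcrossings of $[a,b]$ by $D(f)/(b-a)^2$ --- is exactly the strategy of that cited source (its title is ``Crossing estimates and convergence of Dirichlet functions along random walk and diffusion paths''). So you have correctly identified the route; the issue is that, as written, both load-bearing steps are left open, and the two points you flag as ``obstacles'' are genuine gaps, not routine verifications. For the crossing estimate itself, the ingredients you name do not assemble on their own: $\phi(Z_n)$ is not a supermartingale, so there is no Doob upcrossing inequality to apply at the successive hitting times, and an upcrossing is a long path segment that can backtrack, so it cannot be charged edge by edge against the traversal counts $g(x,u)c(u,v)$ (negative increments spoil any termwise bound, and Cauchy--Schwarz along a segment produces the segment length as a factor). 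The known assembly watches the walk only on the level sets $A=\myset{f\le a}$ and $B=\myset{f\ge b}$: each upcrossing is exactly one transition of the traced chain from $A$ to $B$; the traced chain is again reversible with conductances $c'(u,v)$ and the same Green kernel, so the expected number of such transitions is $\sum_{u\in A,v\in B}g(x,u)c'(u,v)\le g(x,x)\sum_{u\in A,v\in B}c'(u,v)$, and the total traced conductance $\sum_{u\in A}\pi(u)\PP_u[\text{hit }B\text{ before returning to }A]$ is an effective conductance, bounded by $D(\phi)$ via the Dirichlet principle. That assembly, not the list of ingredients, is the theorem.

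The $L^2$ step for unbounded $f$ hides a second real gap. Crossing estimates control oscillation, not size: what they yield for $h=f-f_N$ (with $h(x)=0$) is a weak-type bound $\PP_x\left[\sup_n|h(Z_n)|\ge t\right]\le C\,D(h)/t^2$, and integrating such a tail is logarithmically divergent, so no bound of the form $\sup_n\EE_x\left[h(Z_n)^2\right]\le C\left(D(h)+h(x)^2\right)$ can be ``extracted from the same energy estimates.'' That uniform second-moment bound is proved by a different mechanism --- the Royden decomposition $h=h_{HD}+h_0$ of Theorem \ref{thm_Soa}, the $L^2$-bounded martingale argument for the harmonic part, and superharmonic domination (Lemma \ref{lem_ALP}) for the $\DD_0$ part --- which is precisely what this paper carries out in the proof of Theorem \ref{thm_trace}.

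Finally, the reason you give for discarding the martingale decomposition is false, and the error is worth understanding. For a transient \emph{reversible} chain the Green kernel is symmetric and dominated by its diagonal: $g(x,y)=g(y,x)=F(y,x)\,g(x,x)\le g(x,x)$. Hence, with your own formula,
$$\EE_x\langle M\rangle_\infty\le\sum_{y\in X}G(x,y)\,\EE_y\left[(f(Z_1)-f(Z_0))^2\right]=\sum_{y,z\in X}g(x,y)\,c(y,z)(f(y)-f(z))^2\le 2\,g(x,x)\,D(f)<+\infty,$$
so the quadratic variation always pairs correctly with $D(f)$; this is exactly the computation in the paper's proof of Theorem \ref{thm_trace} (using $\pi(x)G(x,o)=\pi(o)G(o,x)$ and $F\le1$). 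The martingale part of $f(Z_n)$ therefore converges a.s.\ and in $L^2$ for free. What actually blocks the naive route is the compensator $A_n=\sum_{k<n}(Pf-f)(Z_k)$, whose convergence is not controlled by $D(f)$ by any absolute-summability argument; since $f(Z_n)=f(Z_0)+M_n+A_n$, the theorem is equivalent to convergence of the compensator, and that is what the crossing estimate (or, alternatively, the Royden decomposition) is really needed for.
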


For all $f$ with $D(f)<+\infty$, under $\PP_o$, $\myset{f(Z_n)}$ converges almost surely and in $L^2$ to a random variable $W$, that is
$$f(Z_n)\to W,\PP_o\text{-a.s.},\EE_o\left[\left(f(Z_n)-W\right)^2\right]\to0,$$
then $W$ is a terminal random variable. By Theorem \ref{thm_conv}, $Z_n\to Z_\infty$, $\PP_o$-a.s.. By \cite[Corollary 7.65]{Wo09}, $W$ is of the form $W=\vphi(Z_\infty)$, $\PP_o$-a.s., where $\vphi$ is a measurable function on $\dd X$, we define a map $f\mapsto\vphi$, this is the operation of taking boundary value in some sense.

Let $\DD=\myset{f:D(f)<+\infty}$. The Dirichlet norm of $f\in\DD$ is given by $\lVert f\rVert^2=D(f)+\pi(o)f(o)^2$. Let $\DD_0$ be the family of all functions that are limits in the Dirichlet norm of functions with finite support. We have the following Royden decomposition.

\begin{mythm}(\cite[Theorem 3.69]{Soa94})\label{thm_Soa}
For all $f\in\DD$, there exist unique harmonic Dirichlet function $f_{HD}$ and $f_0\in\DD_0$ such that $f=f_{HD}+f_0$. Moreover, $D(f)=D(f_{HD})+D(f_0)$.
\end{mythm}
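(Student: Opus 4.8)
The plan is to realize the splitting $f=f_{HD}+f_0$ as an orthogonal projection in the energy inner product, with transience supplying the completeness and nondegeneracy that make this work. Write $D(f,g)=\frac12\sum_{x,y\in X}c(x,y)(f(x)-f(y))(g(x)-g(y))$ for the polarization of $D$. Summation by parts gives, for the indicator $\indi_{\myset{x}}$ of a single vertex,
$$D(f,\indi_{\myset{x}})=\sum_{y\in X}c(x,y)(f(x)-f(y))=\pi(x)f(x)-\sum_{y\in X}c(x,y)f(y),$$
so a function $h\in\DD$ is harmonic (that is, $h(x)=\sum_y P(x,y)h(y)$ for all $x$) if and only if $D(h,g)=0$ for every $g\in C_0(X)$, hence for every $g\in\DD_0$. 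Thus the harmonic Dirichlet functions are exactly the $D$-orthogonal complement of $\DD_0$ inside $\DD$, and the asserted decomposition is precisely the splitting of $f$ into its $\DD_0$-part and this complement. The Pythagorean identity $D(f)=D(f_0)+D(f_{HD})$ is then forced by this orthogonality.

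First I would equip $\DD_0$ with the form $D(\cdot,\cdot)$ and verify that $(\DD_0,D)$ is a genuine Hilbert space. Nondegeneracy is where transience enters: if $f\in\DD_0$ and $D(f)=0$ then $f$ is constant, and since $Z$ (equivalently $\Ee_X$) is transient, the only constant lying in $\DD_0$ is $0$; hence $\sqrt{D}$ is a norm on $\DD_0$. For completeness I would invoke the Green potential $g(\cdot,x)=G(\cdot,x)/\pi(x)$ of the transient process, which reproduces point values, $D(f,g(\cdot,x))=f(x)$ for all $f\in\DD_0$, and has finite energy $D(g(\cdot,x))=g(x,x)<+\infty$. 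Cauchy--Schwarz then yields the pointwise bound $|f(x)|\le\sqrt{g(x,x)}\,\sqrt{D(f)}$, so every $D$-Cauchy sequence in $\DD_0$ is pointwise Cauchy; the pointwise limit is readily checked to lie in $\DD_0$ with $D(f_n-f)\to0$.

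With $(\DD_0,D)$ a Hilbert space, I would obtain the decomposition by Riesz representation rather than by projecting in the degenerate seminorm on the full space $\DD$. Given $f\in\DD$, the functional $g\mapsto D(f,g)$ on $(\DD_0,D)$ is bounded, $|D(f,g)|\le\sqrt{D(f)}\sqrt{D(g)}$, so there is a unique $f_0\in\DD_0$ with $D(f,g)=D(f_0,g)$ for all $g\in\DD_0$. Setting $f_{HD}=f-f_0$ gives $D(f_{HD},g)=0$ for all $g\in\DD_0$, so $f_{HD}$ is harmonic by the characterization above and $f=f_0+f_{HD}$. Uniqueness is immediate: if $f_0+f_{HD}=f_0'+f_{HD}'$ then $w:=f_0-f_0'=f_{HD}'-f_{HD}\in\DD_0$ satisfies $D(w,w)=D(w,f_{HD}'-f_{HD})=0$, whence $w=0$ by nondegeneracy. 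The Pythagorean identity follows from $D(f_0,f_{HD})=0$.

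The hard part will be the completeness of $(\DD_0,D)$, and more precisely the continuity of point evaluations in the energy norm. This is not a formal manipulation: it rests on transience through the finiteness of the Green potential's energy $g(x,x)<+\infty$ for the present conductances $c$. Were the walk recurrent, constants would belong to $\DD_0$, $D$ would be degenerate there, and both the projection and its uniqueness would collapse; so establishing these transient estimates is the crux on which the whole decomposition turns.
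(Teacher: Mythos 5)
The paper itself gives no proof of this statement---it is quoted directly from \cite[Theorem 3.69]{Soa94}---so the only meaningful comparison is with the standard argument, and your proposal is precisely that argument: characterize the harmonic Dirichlet functions as the $D$-orthogonal complement of $\DD_0$ via summation by parts, use transience to make $(\DD_0,D)$ a genuine Hilbert space, and then produce $f_0$ by Riesz representation (equivalently, orthogonal projection), with uniqueness and the Pythagorean identity falling out of orthogonality. Your outline is correct, and the crux you single out---boundedness of point evaluations on $(\DD_0,D)$ through the finite-energy Green potential, i.e.\ $\pi(x)f(x)^2\le D(f)G(x,x)$---is exactly the paper's Lemma \ref{lem_ALP}, quoted from \cite[Lemma 2.1]{ALP99}, so the ``transient estimates'' you defer are standard facts (provable by exhausting $X$ by finite subnetworks) rather than a gap in the argument.
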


\begin{mylem}(\cite[Lemma 2.1]{ALP99})\label{lem_ALP}
For all $f\in\DD_0$, $x\in X$, we have
$$\pi(x)f(x)^2\le D(f)G(x,x).$$
Furthermore, there exists a superharmonic function $h\in\DD_0$ such that $h\ge|f|$ point wise and $D(h)\le D(f)$.
\end{mylem}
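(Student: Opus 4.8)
The plan is to prove the two assertions separately, both resting on the reproducing-kernel role of the Green function $G$ on the transient network $(X,c)$.

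For the pointwise bound, I would first record the discrete Gauss--Green identity
$$D(f,g)=\sum_{x\in X}f(x)Lg(x),\quad Lg(x)=\sum_{z\in X}c(x,z)(g(x)-g(z)),$$
valid for $f\in C_0(X)$ and any $g$ finite on $X$ (only finitely many edges contribute, so the rearrangement is legitimate). Applying this with $g=G(\cdot,y)$ and using the defining relation $G(x,y)-\sum_zP(x,z)G(z,y)=\delta_{xy}$, which gives $LG(\cdot,y)(x)=\pi(x)\delta_{xy}$, I obtain the reproducing identity $D(f,G(\cdot,y))=\pi(y)f(y)$ for all $f\in C_0(X)$. Since a direct computation from the same identity gives $D(G(\cdot,y),G(\cdot,y))=\pi(y)G(y,y)<+\infty$ by transience, the Cauchy--Schwarz inequality for the non-negative bilinear form $D$ yields $(\pi(y)f(y))^2\le D(f)\,\pi(y)G(y,y)$, that is $\pi(y)f(y)^2\le D(f)G(y,y)$ for $f\in C_0(X)$. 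Finally I would pass to $f\in\DD_0$ by density: this very inequality, applied to $f_n-f_m$ for an approximating sequence $\{f_n\}\subseteq C_0(X)$, shows $\{f_n(y)\}$ is Cauchy and that the evaluation $f\mapsto f(y)$ is continuous in the Dirichlet norm, so the bound survives in the limit.

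For the superharmonic majorant I would work in the Hilbert space $(\DD_0,D)$, which in the transient case is complete with $D$ a genuine inner product. Replacing $f$ by $|f|$ (a normal contraction, so $|f|\in\DD_0$ and $D(|f|)\le D(f)$), it suffices to majorize a nonnegative function. Consider the set $\mathcal C=\{g\in\DD_0:g\ge|f|\ \text{pointwise}\}$, which is convex, nonempty (it contains $|f|$), and closed: if $g_n\to g$ in the Dirichlet norm then the bound of the first part forces $g_n\to g$ pointwise, preserving $g\ge|f|$. Let $h$ be the $D$-projection of $0$ onto $\mathcal C$, i.e. the minimizer of $D(g)$ over $g\in\mathcal C$. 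Then $h\ge|f|$ and $D(h)\le D(|f|)\le D(f)$ immediately, and $h\in\DD_0$ by construction.

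It remains to see that $h$ is superharmonic. The projection is characterized by the variational inequality $D(h,g-h)\ge0$ for every $g\in\mathcal C$. Choosing $g=h+\phi$ with $\phi\ge0$ finitely supported (so $g\ge h\ge|f|$ lies in $\mathcal C$) gives $D(h,\phi)\ge0$; by the Gauss--Green identity $D(h,\phi)=\sum_x\phi(x)Lh(x)$, and taking $\phi=\delta_x$ yields $Lh(x)\ge0$ for every $x$, i.e. $h(x)\ge\sum_zP(x,z)h(z)$, which is exactly superharmonicity. I expect the main obstacle to be the soft functional-analytic bookkeeping rather than any single computation: verifying that $(\DD_0,D)$ is a Hilbert space and that $\mathcal C$ is $D$-closed (both hinge on the pointwise bound of the first part and on transience), and justifying the summation-by-parts manipulations together with the stability of $\DD_0$ under the contraction $f\mapsto|f|$. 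Once these are in place, the projection argument and the translation of the variational inequality into pointwise superharmonicity are routine.
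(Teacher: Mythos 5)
The paper itself offers no proof of this lemma: it is imported verbatim from \cite[Lemma 2.1]{ALP99}, so the only comparison available is with the standard potential-theoretic argument in the literature, and your proposal is essentially that argument. The first half (reproducing identity $D(f,G(\cdot,y))=\pi(y)f(y)$ for $f\in C_0(X)$, Cauchy--Schwarz, then passage to $\DD_0$ by density, using that Dirichlet-norm convergence implies pointwise convergence) and the second half (realize the majorant as the $D$-projection of $0$ onto the closed convex set $\mathcal{C}=\myset{g\in\DD_0:g\ge|f|}$, then read superharmonicity off the variational inequality tested against nonnegative finitely supported functions) are both correct in outline; the projection is precisely the classical r\'eduite construction, and your use of the first part to prove that $(\DD_0,D)$ is a Hilbert space with continuous point evaluations, hence that $\mathcal{C}$ is closed, is exactly right.

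Two points you dismiss as bookkeeping are the only places where real content hides, and one of them is, as written, circular. You obtain $D(G(\cdot,y),G(\cdot,y))=\pi(y)G(y,y)$ ``from the same identity'', but that identity was established only for $f\in C_0(X)$, and $G(\cdot,y)$ is not finitely supported; to apply it with $f=G(\cdot,y)$ you must already know that $G(\cdot,y)$ lies in $\DD_0$ with finite energy, which is the very point at issue. The standard repair is to prove this first: either exhaust $X$ by finite sets and pass to the limit along the killed Green functions, or invoke the fact that $F(\cdot,y)=G(\cdot,y)/G(y,y)$ is the equilibrium potential of $\myset{y}$, so that $D(F(\cdot,y))=\pi(y)/G(y,y)$ and hence $D(G(\cdot,y))=\pi(y)G(y,y)$; after that, the reproducing identity extends to all of $\DD_0$ by continuity and may be evaluated at $G(\cdot,y)$ itself. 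Second, the membership $|f|\in\DD_0$ (not merely the pointwise energy bound $D(|f|)\le D(f)$) needs an argument, since $\DD_0$ is defined by norm closure: take $f_n\in C_0(X)$ with $f_n\to f$ in the Dirichlet norm, note that $|f_n|\in C_0(X)$ have bounded energies and converge pointwise to $|f|$, and use weak compactness (or Banach--Saks) in $(\DD_0,D)$, together with the continuity of point evaluations from your first part, to conclude $|f|\in\DD_0$ with $D(|f|)\le\liminf_n D(f_n)=D(f)$. With these two patches your proof is complete and matches the argument the paper delegates to the reference.
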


\begin{proof}[Proof of Theorem \ref{thm_trace}]
Since $\Ff_{\mybar{X}}\subseteq C(\mybar{X})$, for all $u\in\Ff_{\mybar{X}}$, it is trivial to take boundary value just as $u|_{\dd X}$. We still use notions $f,\vphi$. We have
$$f(Z_n)\to\vphi(Z_\infty),\PP_o\text{-a.s.},\EE_o\left[\left(f(Z_n)-\vphi(Z_\infty)\right)^2\right]\to0.$$
Under $\PP_o$, the hitting distribution of $\myset{Z_n}$ or the distribution of $Z_\infty$ is $\nu$, normalized Hausdorff measure on $K$, we have
$$\int_{\dd X}|\vphi|^2\md\nu=\EE_o\left[\vphi(Z_\infty)^2\right]=\lim_{n\to+\infty}\EE_o\left[f(Z_n)^2\right].$$
We only need to estimate $\EE_o\left[f(Z_n)^2\right]$ in terms of
$$D(f)+(f,f)=\frac{1}{2}\sum_{x,y\in X}c(x,y)(f(x)-f(y))^2+\sum_{x\in X}f(x)^2m(x).$$
By Theorem \ref{thm_Soa}, we only need to consider harmonic Dirichlet functions and functions in $\DD_0$.

For all $f\in\DD$, we have
$$
\begin{aligned}
&\sum_{k=0}^\infty\EE_o\left[\left(f(Z_{k+1})-f(Z_k)\right)^2\right]=\sum_{k=0}^\infty\EE_o\left[\EE_o\left[\left(f(Z_{k+1})-f(Z_k)\right)^2|Z_k\right]\right]\\
&=\sum_{k=0}^\infty\EE_o\left[\EE_{Z_k}\left[\left(f(Z_{1})-f(Z_0)\right)^2\right]\right]=\sum_{k=0}^\infty\sum_{x\in X}P^{(k)}(o,x)\EE_{x}\left[\left(f(Z_{1})-f(Z_0)\right)^2\right]\\
&=\sum_{x,y\in X}\left(\sum_{k=0}^\infty P^{(k)}(o,x)\right)P(x,y)\left(f(x)-f(y)\right)^2=\sum_{x,y\in X}G(o,x)\frac{c(x,y)}{\pi(x)}(f(x)-f(y))^2\\
&=\sum_{x,y\in X}\frac{\pi(x)G(x,o)}{\pi(o)}\frac{c(x,y)}{\pi(x)}(f(x)-f(y))^2=\sum_{x,y\in X}\frac{F(x,o)G(o,o)}{\pi(o)}c(x,y)(f(x)-f(y))^2\\
&\le\frac{G(o,o)}{\pi(o)}\sum_{x,y\in X}c(x,y)(f(x)-f(y))^2=\frac{2G(o,o)}{\pi(o)}D(f).
\end{aligned}
$$

Let $f$ be a harmonic Dirichlet function, then $\myset{f(Z_n)}$ is a martingale. For all $n\ge1$
$$
\begin{aligned}
\EE_o\left[f(Z_n)^2\right]&=\EE_o\left[\left(\sum_{k=0}^{n-1}(f(Z_{k+1})-f(Z_k))+f(Z_0)\right)^2\right]\\
&=\sum_{k=0}^{n-1}\EE_o\left[(f(Z_{k+1})-f(Z_k))^2\right]+f(o)^2\\
&\le\sum_{k=0}^{\infty}\EE_o\left[(f(Z_{k+1})-f(Z_k))^2\right]+f(o)^2\\
&\le\frac{2G(o,o)}{\pi(o)}D(f)+f(o)^2,
\end{aligned}
$$
hence
\begin{equation}\label{eqn_hd}
\EE_o\left[f_{HD}(Z_n)^2\right]\le\frac{2G(o,o)}{\pi(o)}D(f_{HD})+f_{HD}(o)^2.
\end{equation}

Let $f\in\DD_0$. Let $h$ be as in Lemma \ref{lem_ALP}. Then $h\ge0$. Since $h$ is superharmonic, we have
$$\EE_o\left[h(Z_{k+1})-h(Z_k)|Z_0,\ldots,Z_k\right]\le0,$$
hence
$$
\begin{aligned}
&\EE_o\left[h(Z_{k+1})^2-h(Z_k)^2\right]\\
&=\EE_o\left[(h(Z_{k+1})-h(Z_{k}))^2\right]+2\EE_o\left[h(Z_k)(h(Z_{k+1})-h(Z_k))\right]\\
&=\EE_o\left[(h(Z_{k+1})-h(Z_{k}))^2\right]+2\EE_o\left[\EE_o\left[h(Z_k)(h(Z_{k+1})-h(Z_k))|Z_0,\ldots,Z_k\right]\right]\\
&=\EE_o\left[(h(Z_{k+1})-h(Z_{k}))^2\right]+2\EE_o\left[h(Z_k)\EE_o\left[h(Z_{k+1})-h(Z_k)|Z_0,\ldots,Z_k\right]\right]\\
&\le\EE_o\left[(h(Z_{k+1})-h(Z_{k}))^2\right].
\end{aligned}
$$
We have
$$
\begin{aligned}
\EE_o\left[h(Z_n)^2\right]&=\sum_{k=0}^{n-1}\EE_o\left[h(Z_{k+1})^2-h(Z_k)^2\right]+h(o)^2\\
&\le\sum_{k=0}^{n-1}\EE_o\left[(h(Z_{k+1})-h(Z_{k}))^2\right]+\frac{G(o,o)}{\pi(o)}D(h)\\
&\le\sum_{k=0}^{\infty}\EE_o\left[(h(Z_{k+1})-h(Z_{k}))^2\right]+\frac{G(o,o)}{\pi(o)}D(h)\\
&\le\frac{2G(o,o)}{\pi(o)}D(h)+\frac{G(o,o)}{\pi(o)}D(h)\\
&=\frac{3G(o,o)}{\pi(o)}D(h),
\end{aligned}
$$
hence
$$\EE_o\left[f(Z_n)^2\right]\le\EE_o\left[h(Z_n)^2\right]\le\frac{3G(o,o)}{\pi(o)}D(h)\le\frac{3G(o,o)}{\pi(o)}D(f).$$
We have
\begin{equation}\label{eqn_d0}
\EE_o\left[f_0(Z_n)^2\right]\le\frac{3G(o,o)}{\pi(o)}D(f_0).
\end{equation}
Combining Equation (\ref{eqn_hd}) and Equation (\ref{eqn_d0}), we have
$$
\begin{aligned}
\EE_o\left[f(Z_n)^2\right]&=\EE_o\left[(f_{HD}(Z_n)+f_0(Z_n))^2\right]\le2\EE_o\left[f_{HD}(Z_n)^2+f_0(Z_n)^2\right]\\
&\le2\left(\frac{2G(o,o)}{\pi(o)}D(f_{HD})+f_{HD}(o)^2+\frac{3G(o,o)}{\pi(o)}D(f_0)\right)\\
&\le2\left(\frac{5G(o,o)}{\pi(o)}D(f)+(f(o)-f_0(o))^2\right)\\
&\le2\left(\frac{5G(o,o)}{\pi(o)}D(f)+2f(o)^2+2f_0(o)^2\right)\\
&\le2\left(\frac{5G(o,o)}{\pi(o)}D(f)+2\frac{1}{m(o)}f(o)^2m(o)+2\frac{G(o,o)}{\pi(o)}D(f_0)\right)\\
&\le2\left(\frac{7G(o,o)}{\pi(o)}D(f)+2\frac{1}{m(o)}\sum_{x\in X}f(x)^2m(x)\right)\\
&\le\max{\myset{\frac{14G(o,o)}{\pi(o)},\frac{4}{m(o)}}}\left(D(f)+\sum_{x\in X}f(x)^2m(x)\right).
\end{aligned}
$$
Let $C^2=\max{\myset{\frac{14G(o,o)}{\pi(o)},\frac{4}{m(o)}}}$ be a constant only depending on conductance $c$ and measure $m$, we have
$$\int_{\dd X}|\vphi|^2\md\nu=\lim_{n\to+\infty}\EE_o\left[f(Z_n)^2\right]\le C^2(D(f)+\sum_{x\in X}f(x)^2m(x)).$$
In the notion of $u$, we obtain Equation (\ref{eqn_trace}).
\end{proof}

Second, we obtain a regular Dirichlet form on $L^2(\dd X;\nu)$ by abstract theory of trace form. More detailed discussion of trace form, see \cite[Chapter 5, 5.2]{CF12} and \cite[Chapter 6, 6.2]{FOT11}. We introduce some results used here.

Taking trace with respect to a regular Dirichlet form corresponds to taking time-change with respect to corresponding Hunt process. Taking trace is realized by smooth measure. The family of all smooth measures is denoted by $S$. Taking time-change is realized by positive continuous additive functional, abbreviated as PCAF. The family of all PCAFs is denoted by $A_c^+$. The family of all equivalent classes of $A_c^+$ and the family $S$ are in one-to-one correspondence, see \cite[Theorem 5.1.4]{FOT11}.

We fix a regular Dirichlet form $(\Ee,\Ff)$ on $L^2(E;m)$ and its corresponding Hunt process $X=\myset{X_t}$.
\begin{itemize}
\item First, we introduce basic setup of time-change. Given a PCAF $A\in A_c^+$, define its support $F$, then $F$ is quasi closed and nearly Borel measurable. Define the right-continuous inverse $\tau$ of $A$, let $\check{X}_t=X_{\tau_t}$, then $\check{X}$ is a right process with state space $F$ and called the time-changed process of $X$ by $A$.
\item Second, we introduce basic setup of trace form. For arbitrary non-polar, quasi closed, nearly Borel measurable, finely closed set $F$, define hitting distribution $H_F$ of ${X}$ for $F$ as follows:
$$H_Fg(x)=\EE_x\left[g(X_{\sigma_F})\indi_{\sigma_F<+\infty}\right],x\in E,g\text{ is nonnegative Borel function.}$$
By \cite[Theorem 3.4.8]{CF12}, for all $u\in\Ff_e$, we have $H_F|u|(x)<+\infty$, q.e. and $H_Fu\in\Ff_e$. Define
$$\check{\Ff}_e=\Ff_e|_F,\check{\Ee}(u|_F,v|_F)={\Ee}(H_Fu,H_Fv),u,v\in\Ff_e.$$
Two elements in $\check{\Ff}_e$ can be identified if they coincide q.e. on $F$. We still need a measure on $F$. Let
$$S_F=\myset{\mu\in S:\text{the quasi support of }\mu=F},$$
where the quasi support of a Borel measure is the smallest (up to q.e. equivalence) quasi closed set outside which the measure vanishes. Let $\mu\in S_F$, by \cite[Theorem 3.3.5]{CF12}, two elements of $\check{\Ff}_e$ coincide q.e. on $F$ if and only if they coincide $\mu$-a.e.. Define $\check{\Ff}=\check{\Ff}_e\cap L^2(F;\mu)$. Then $(\check{\Ee},\check{\Ff})$ is a symmetric form on $L^2(F;\mu)$.
\item Third, the relation of trace form and time-change process is as follows. Given $A\in A_c^+$ or equivalently $\mu\in S$, let $F$ be the support of $A$, then $F$ satisfies the conditions in the second setup and by \cite[Theorem 5.2.1(\rmnum{1})]{CF12}, $\mu\in S_F$. We obtain $(\check{\Ee},\check{\Ff})$ on $L^2(F;\mu)$. By \cite[Theorem 5.2.2]{CF12}, the regular Dirichlet form corresponding to $\check{X}$ is exactly $(\check{\Ee},\check{\Ff})$ on $L^2(F;\mu)$.
\end{itemize}

We have $F\subseteq\mathrm{supp}(\mu)$ q.e.. But the point is that $F$ can be strictly contained in $\mathrm{supp}(\mu)$ q.e., usually we indeed need a trace form on $\mathrm{supp}(\mu)$. \cite{CF12} provided a solution not for all smooth measures, but some subset
$$\mathring{S}=\myset{\mu:\text{positive Radon measure charging no }\Ee\text{-polar set}}.$$
For non-$\Ee$-polar, quasi closed subset $F$ of $E$, let
$$\mathring{S}_F=\myset{\mu\in\mathring{S}:\text{the quasi support of }\mu\text{ is }F}.$$
Note that if $\mu\in\mathring{S}_F$, it may happen that $\mathrm{supp}(\mu)\supsetneqq F$ q.e.. We want some $\mu\in\mathring{S}_F$ such that $\mathrm{supp}(\mu)=F$ q.e.. \cite{CF12} gave a criterion as follows.

\begin{mylem}(\cite[Lemma 5.2.9(\rmnum{2})]{CF12})\label{lem_ac}
Let $F$ be a non-$\Ee$-polar, nearly Borel, finely closed set. Let $\nu\in\mathring{S}$ satisfy $\nu(E\backslash F)=0$. Assume the 1-order hitting distribution $H_F^1(x,\cdot)$ of $X$ for $F$ is absolutely continuous with respect to $\nu$ for $m$-a.e. $x\in E$. Then $\nu\in\mathring{S}_F$.
\end{mylem}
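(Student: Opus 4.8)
The statement is quoted from \cite{CF12}, but here is how I would prove it. Write $Y$ for the quasi support of $\nu$, i.e.\ the smallest (up to q.e.\ equivalence) quasi closed set with $\nu(E\setminus Y)=0$; the plan is to show $Y=F$ q.e., which is precisely $\nu\in\mathring{S}_F$. The easy inclusion $Y\subseteq F$ q.e.\ comes for free: since $F$ is nearly Borel and finely closed it is (q.e.) quasi closed, and by hypothesis $\nu(E\setminus F)=0$, so $F$ is one of the quasi closed sets carrying $\nu$, whence the smallest such set satisfies $Y\subseteq F$ q.e. Everything then reduces to showing that $B:=F\setminus Y$ is $\Ee$-polar.

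This is where the absolute continuity hypothesis enters. Because $\nu(E\setminus Y)=0$ and $H^1_F(x,\cdot)\ll\nu$ for $m$-a.e.\ $x$, we get $H^1_F(x,E\setminus Y)=\EE_x[e^{-\sigma_F}\indi_{\myset{X_{\sigma_F}\in E\setminus Y}}\indi_{\myset{\sigma_F<\infty}}]=0$ for $m$-a.e.\ $x$, where $\sigma_F$ is the hitting time of $F$. As $e^{-\sigma_F}>0$ on $\myset{\sigma_F<\infty}$, this forces $X_{\sigma_F}\in Y$ almost surely on $\myset{\sigma_F<\infty}$. Now $Y\subseteq F$ gives $\sigma_Y\ge\sigma_F$ on every path, while $X_{\sigma_F}\in Y$ gives $\sigma_Y\le\sigma_F$, so $\sigma_Y=\sigma_F$ $\PP_x$-a.s.\ for $m$-a.e.\ $x$. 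Passing to the $1$-equilibrium potentials $u_F(x)=\EE_x[e^{-\sigma_F}]$ and $u_Y(x)=\EE_x[e^{-\sigma_Y}]$ yields $u_F=u_Y$ $m$-a.e.; both functions are $1$-excessive, hence quasi continuous, so in fact $u_F=u_Y$ q.e.

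Finally I would run a regular-points argument. From $u_F=u_Y$ q.e.\ together with the pathwise inequality $\sigma_F\le\sigma_Y$, the nonnegative quantity $e^{-\sigma_F}-e^{-\sigma_Y}$ has zero expectation, so $\sigma_F=\sigma_Y$ $\PP_x$-a.s.\ for q.e.\ $x$. Let $B^r=\myset{x:\PP_x[\sigma_B=0]=1}$ be the set of regular points of $B$. Since the process is $m$-symmetric, Hunt's hypothesis (H) holds, so the semipolar set $B\setminus B^r$ is $\Ee$-polar. For q.e.\ $x\in B^r$ we have $\sigma_F=0$ (because $B\subseteq F$), hence $\sigma_Y=\sigma_F=0$, so $x$ is regular for $Y$; as $Y$ is finely closed this puts $x\in Y$ q.e., contradicting $x\in B=F\setminus Y$. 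Thus $B^r$, and therefore $B=F\setminus Y$, is $\Ee$-polar, giving $F\subseteq Y$ q.e. Combined with the first inclusion, $Y=F$ q.e., i.e.\ $\nu\in\mathring{S}_F$.

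The step requiring the most care, and the main obstacle, is the upgrade of the identity $\sigma_F=\sigma_Y$ from $m$-a.e.\ to q.e.: this is exactly what allows the regular-point argument to reach the possibly $m$-null set $F\setminus Y$. It rests on two standard but essential facts for symmetric Dirichlet forms, namely that two quasi continuous functions agreeing $m$-a.e.\ agree q.e.\ (applied to the excessive functions $u_F,u_Y$), and that every semipolar set is $\Ee$-polar.
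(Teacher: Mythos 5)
The paper does not prove this lemma at all --- it is quoted verbatim from \cite[Lemma 5.2.9(\rmnum{2})]{CF12} and used as a black box --- so there is no in-paper argument to compare yours against. On its own merits, your proof is in substance the standard one: reduce to showing that $F\setminus Y$ is $\Ee$-polar, where $Y$ is the quasi support of $\nu$; feed the absolute continuity hypothesis into the hitting distribution to identify the $1$-order equilibrium potentials of $F$ and $Y$, first $m$-a.e.\ and then q.e.; and finish with the facts that the irregular points of $F$ form a semipolar set and that semipolar sets are $\Ee$-polar for symmetric forms. All of these ingredients are correct, and the overall structure is sound.

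One step deserves more care than you give it: the implication ``$X_{\sigma_F}\in Y$ gives $\sigma_Y\le\sigma_F$'' is only automatic on $\myset{\sigma_F>0}$. If $\sigma_F=0$, then $X_{\sigma_F}=X_0=x\in Y$ does not by itself yield $\sigma_Y=0$ unless $x$ is regular for $Y$, so the identity $u_F=u_Y$ $m$-a.e.\ is not yet justified at starting points regular for $F$. The gap is reparable: for $m$-a.e.\ $x$ regular for $F$ one has $H^1_F(x,\cdot)=\delta_x$, so the hypothesis $\delta_x\ll\nu$ forces $\nu(\myset{x})>0$; since $\nu$ charges no polar set, $\myset{x}$ is non-polar, hence (semipolar $=$ polar) not thin, hence $x$ is regular for $\myset{x}\subseteq Y$ and therefore for $Y$. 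Alternatively the whole comparison of hitting times can be bypassed by the equivalent route through \cite[Theorem 3.3.5]{CF12}: for quasi continuous $u$ with $u=0$ $\nu$-a.e.\ one gets $H^1_F\lvert u\rvert=0$ $m$-a.e., hence q.e.\ (this function being quasi continuous), hence $\lvert u\rvert=0$ q.e.\ on the regular points of $F$ and so q.e.\ on $F$, which gives $F\subseteq Y$ q.e.\ directly. With either repair your argument closes.
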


\begin{mycor}(\cite[Corollary 5.2.10]{CF12})\label{cor_cf}
Let $F$ be a closed set. If there exists $\nu\in\mathring{S}_F$ such that the topological support $\mathrm{supp}(\nu)=F$, then for all $\mu\in\mathring{S}_F$, we have $(\check{\Ee},\check{\Ff})$ is a regular Dirichlet form on $L^2(F;\mu)$.
\end{mycor}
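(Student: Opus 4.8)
The plan is to deduce the statement from Theorem 5.2.2, using the distinguished measure $\nu$ to certify that the abstract quasi support produced by the Revuz correspondence coincides with the given closed set $F$.

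First I would apply Theorem 5.2.2 to $\nu$ itself. Since $\nu\in\mathring{S}_F$, the PCAF associated to $\nu$ has support equal to the quasi support of $\nu$, namely $F$, so the time-changed process $\check{X}$ is $\nu$-symmetric with state space $F$ and Dirichlet form $(\check{\Ee},\check{\Ff})$ on $L^2(F;\nu)$. The extra hypothesis $\mathrm{supp}(\nu)=F$ guarantees that this quasi-closed support is, as a topological subspace of $E$, exactly the closed locally compact set $F$, and that $\nu$ charges every nonempty relatively open subset of $F$. Hence the correspondence between $\nu$-symmetric Hunt processes and regular Dirichlet forms gives that $(\check{\Ee},\check{\Ff})$ is \emph{regular} on $L^2(F;\nu)$, with core $\check{\Ff}\cap C_0(F)$.

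Next I would treat an arbitrary $\mu\in\mathring{S}_F$. The crucial observation is that the trace construction depends on the reference measure only through the ambient $L^2$-space: the extended trace domain $\check{\Ff}_e=\Ff_e|_F$ and the energy $\check{\Ee}(u|_F,v|_F)=\Ee(H_Fu,H_Fv)$ are identical for $\mu$ and for $\nu$, and only $\check{\Ff}=\check{\Ff}_e\cap L^2(F;\mu)$ changes. Applying Theorem 5.2.2 to $\mu$ yields a regular Dirichlet form on $L^2(F';\mu)$, where $F'$ is the support of the PCAF of $\mu$, i.e. the quasi support of $\mu$; but $\mu\in\mathring{S}_F$ forces $F'=F$. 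By the first step this common quasi support is the genuine closed set $F$, so we obtain regularity of $(\check{\Ee},\check{\Ff})$ on $L^2(F;\mu)$.

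The main obstacle is exactly this identification, for a general $\mu\in\mathring{S}_F$, of the abstract quasi support delivered by the Revuz correspondence with the concrete topological closed set $F$: a priori Theorem 5.2.2 only furnishes a regular form on a quasi-closed state space, which without additional information need not be homeomorphic to the given $F$. The single good measure $\nu$ with $\mathrm{supp}(\nu)=F$ is precisely what bridges this gap, pinning the quasi support to the closed set $F$ once and for all; once that is granted, the full regularity---both the $(\check{\Ee})_1$-density of $\check{\Ff}\cap C_0(F)$ in $\check{\Ff}$ and its uniform density in $C_0(F)$---is supplied directly by Theorem 5.2.2 applied to $\mu$.
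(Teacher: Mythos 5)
This corollary is quoted in the paper directly from \cite[Corollary 5.2.10]{CF12}; the paper supplies no proof of its own, so your attempt can only be measured against the statement itself and the surrounding machinery the paper does quote (Theorem 5.2.2 and Lemma \ref{lem_ac}).

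There is a genuine gap at the central step. You assert that once the quasi support of $\mu$ is pinned to the closed set $F$, ``the correspondence between $\nu$-symmetric Hunt processes and regular Dirichlet forms'' (first step) and ``Theorem 5.2.2 applied to $\mu$'' (second step) deliver regularity. But Theorem 5.2.2, as used here, identifies the Dirichlet form of the time-changed process $\check{X}$ as $(\check{\Ee},\check{\Ff})$ on $L^2(F;\mu)$, where $\check{X}$ is a priori only a \emph{right process} on a quasi-closed state space; the form of a symmetric right process is in general only quasi-regular. Knowing that the quasi support coincides with the topological closed set $F$, and that $\nu$ charges every relatively open subset of $F$, does not by itself produce a core: regularity requires exhibiting a subfamily of $\check{\Ff}\cap C_0(F)$ that is simultaneously $(\check{\Ee})_1$-dense in $\check{\Ff}$ and uniformly dense in $C_0(F)$. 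The elements of $\check{\Ff}_e=\Ff_e|_F$ are restrictions of quasi-continuous functions and need not be continuous on $F$; the substantive content of the cited result is precisely the verification that $\myset{u|_F:u\in\Ff\cap C_0(E)}$ lies in $\check{\Ff}\cap C_0(F)$ and is dense in both required senses, which uses the regularity of the ambient form $(\Ee,\Ff)$ on $E$ together with properties of the hitting operator $H_F$. Your proposal presupposes this conclusion rather than proving it. A secondary, smaller issue: the support of the PCAF of a general $\mu\in\mathring{S}_F$ agrees with $F$ only up to q.e.\ equivalence, so one must justify that the closed set $F$ itself is an admissible choice of state space for the time-changed process; this is where the distinguished measure $\nu$ with $\mathrm{supp}(\nu)=F$ enters again, and your sketch uses $\nu$ only to ``identify'' the quasi support, not to make this choice legitimate.
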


Roughly speaking, given a positive Radon measure $\mu$ charging no $\Ee$-polar set, let $F=\mathrm{supp}(\mu)$. First check Lemma \ref{lem_ac} to have $\mu\in\mathring{S}_F$, then the quasi support of $\mu$ is $F$ and the support of corresponding PCAF $A$ can be taken as $F$. Second, by Corollary \ref{cor_cf}, the time-changed process $\check{X}$ of $X$ by $A$ corresponds to the regular Dirichlet form $(\check{\Ee},\check{\Ff})$ on $L^2(F;\mu)$.

Return to our case, $\nu$ is a probability measure of finite energy integral, hence $\nu$ is a positive Radon measure charging no $\Ee_{\mybar{X}}$-polar set. We need to check absolutely continuous condition in Lemma \ref{lem_ac}. We give a theorem as follows.
\begin{mythm}\label{thm_hit}
The hitting distributions of $\myset{\mybar{X}_t}$ and $\myset{Z_n}$ for $\dd X$ coincide.
\end{mythm}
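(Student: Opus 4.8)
The plan is to reduce the statement to the convergence results already established for the absorbed process $\myset{X_t}$ and its discrete skeleton. Recall from Section \ref{sec_repre} that $\myset{X_t}$ is the part process of the reflected process $\myset{\mybar{X}_t}$ on $X$, with
$$\zeta=\tau_X=\sigma_{\dd X}=\inf\myset{t>0:\mybar{X}_t\in\dd X}.$$
By Theorem \ref{thm_sto}, $\myset{X_t}$ is stochastically incomplete, so $\sigma_{\dd X}=\zeta<+\infty$ $\PP_x$-a.s.\ for every $x\in X$; in particular $\myset{\mybar{X}_t}$ meets $\dd X$ in finite time almost surely, and its hitting distribution for $\dd X$ started at $x$ is the $\PP_x$-law of $\mybar{X}_{\sigma_{\dd X}}$. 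It thus suffices to identify this boundary point with the limit $Z_\infty$ of the discrete walk, since by Theorem \ref{thm_conv} the $\PP_x$-law of $Z_\infty$ is exactly $\nu_x$, the hitting distribution of $\myset{Z_n}$ for $\dd X$.

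First I would record that the discrete skeleton of $\myset{X_t}$ is the random walk $\myset{Y_n}$, which carries the same transition probability $P$ as $Z$, so that $\myset{Y_n}$ and $\myset{Z_n}$ have the same law under each $\PP_x$. Since $X_t=Y_n$ for $t\in[J_n,J_{n+1})$ and $J_n\uparrow\zeta$, the trajectory $t\mapsto X_t$ passes through the nodes $Y_0,Y_1,\ldots$ in order as $t\uparrow\zeta$. By Theorem \ref{thm_infty} we have $\lvert X_t\rvert\to+\infty$ as $t\uparrow\zeta$, while by Theorem \ref{thm_conv} the skeleton converges, $Y_n\to Z_\infty$ in the Martin compactification $\mybar{X}$, $\PP_x$-a.s. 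Combining these two facts yields $\lim_{t\uparrow\zeta}X_t=\lim_{n\to+\infty}Y_n=Z_\infty$ in $\mybar{X}$.

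Next I would transfer this limit to the reflected process. Because $\mybar{X}_t=X_t$ for $t<\tau_X=\sigma_{\dd X}$, the reflected trajectory approaches $\dd X$ continuously through $X$, and the stopping times $J_n$ (stopping times of $\myset{\mybar{X}_t}$ as well, cf.\ Proposition \ref{prop_jump}) satisfy $J_n\uparrow\sigma_{\dd X}$ with $\mybar{X}_{J_n}=Y_n$. Since the regular Dirichlet form $(\Ee_{\mybar{X}},\Ff_{\mybar{X}})$ is a regular representation of the conservative form $(\Ee^{\rref},\Ff^{\rref}_a)$, the process $\myset{\mybar{X}_t}$ is itself conservative, so $\sigma_{\dd X}$ is strictly below the (infinite) lifetime almost surely. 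Quasi-left-continuity of the Hunt process $\myset{\mybar{X}_t}$ then gives $\mybar{X}_{\sigma_{\dd X}}=\lim_{n\to+\infty}\mybar{X}_{J_n}=\lim_{n\to+\infty}Y_n=Z_\infty$ $\PP_x$-a.s. Consequently the $\PP_x$-law of $\mybar{X}_{\sigma_{\dd X}}$ equals the $\PP_x$-law of $Z_\infty$, namely $\nu_x$, which is the desired equality of hitting distributions.

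I expect the main obstacle to be precisely this last identification $\mybar{X}_{\sigma_{\dd X}}=Z_\infty$, that is, ruling out an additional jump of the reflected process at the very instant it first reaches $\dd X$. The decisive ingredients are the conservativeness of $\myset{\mybar{X}_t}$ (so that $\sigma_{\dd X}$ lies below the lifetime and quasi-left-continuity is applicable) and the representation of $\sigma_{\dd X}$ as the increasing limit of the stopping times $J_n$ with known values $\mybar{X}_{J_n}=Y_n$. Everything else is a direct assembly of Theorems \ref{thm_sto}, \ref{thm_infty} and \ref{thm_conv} together with the part-process description from Section \ref{sec_repre}.
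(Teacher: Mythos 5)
Your proposal is correct and follows essentially the same route as the paper: identify $\sigma_{\dd X}$ as the increasing limit of the jumping times $J_n$, apply quasi-left-continuity of the Hunt process $\myset{\mybar{X}_t}$ to get $\mybar{X}_{\sigma_{\dd X}}=\lim_n\mybar{X}_{J_n}=\lim_n Y_n=Z_\infty$, and invoke Theorem \ref{thm_conv}. The only step you gloss over is that Proposition \ref{prop_jump} gives the $J_n$ as stopping times of $\myset{X_t}$, and one still needs the filtration comparison $\Ff_t\subseteq\mybar{\Ff}_t$ (which the paper verifies explicitly) to conclude they are stopping times of $\myset{\mybar{X}_t}$.
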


\begin{proof}
Recall that $\myset{X_t}$ is characterized by random walk $\myset{Y_n}$ and jumping times $\myset{J_n}$, $\myset{X_t}$ is part process of $\myset{\mybar{X}_t}$ on $X$ and $\zeta=\tau_X=\sigma_{\dd X}<+\infty$, $\PP_x$-a.s. for all $x\in X$.

First, we show that jumping times $J_n$ are stopping times of $\myset{\mybar{X}_t}$ for all $n\ge0$. Let $\myset{\Ff_t}$ and $\myset{\mybar{\Ff}_t}$ be the minimum completed admissible filtration with respect to $\myset{X_t}$ and $\myset{\mybar{X}_t}$, respectively. By Proposition \ref{prop_jump}, $J_n$ are stopping times of $\myset{X_t}$. Since for all Borel measurable set $B\subseteq\mybar{X}$, we have
$$\myset{X_t\in B}=\myset{\mybar{X}_t\in B\cap X}\cap\myset{t<\zeta}\in\mybar{\Ff}_t,$$
$\Ff_t\subseteq\mybar{\Ff}_t$ for all $t\ge0$. $J_n$ are stopping times of $\myset{\mybar{X}_t}$ for all $n\ge0$.

Then, since $J_n\uparrow\zeta=\sigma_{\dd X}$, by quasi left continuity of $\myset{\mybar{X}_t}$, we have for all $x\in X$
$$\PP_x\left[\lim_{n\to+\infty}\mybar{X}_{J_n}=\mybar{X}_{\sigma_{\dd X}},\sigma_{\dd X}<+\infty\right]=\PP_x\left[\sigma_{\dd X}<+\infty\right],$$
that is,
$$\PP_x[\lim_{n\to+\infty}\mybar{X}_{J_n}=\mybar{X}_{\sigma_{\dd X}}]=1.$$
Note that $J_n<\zeta=\sigma_{\dd X}$, we have $\mybar{X}_{J_n}=X_{J_n}=Y_n$, hence
$$\PP_x\left[\lim_{n\to+\infty}Y_n=\mybar{X}_{\sigma_{\dd X}}\right]=1.$$
Hence the hitting distributions of $\myset{\mybar{X}_t}$ and $\myset{Z_n}$ for $\dd X$ coincide under $\PP_x$ for all $x\in X$.
\end{proof}
By Theorem \ref{thm_hit}, the hitting distribution of $\myset{\mybar{X}_t}$ for $\dd X$ under $\PP_x$ is exactly $\nu_x$, hence
\begin{equation}\label{eqn_hit}
\begin{aligned}
H_{\dd X}g(x)&=\EE_x\left[g(\mybar{X}_{\sigma_{\dd X}})\indi_{\myset{\sigma_{\dd X}<+\infty}}\right]=\EE_x\left[g(\mybar{X}_{\sigma_{\dd X}})\right]\\
&=\int_{\dd X}g\md\nu_x=\int_{\dd X}K(x,\cdot)g\md\nu=Hg(x),
\end{aligned}
\end{equation}
for all $x\in X$ and nonnegative Borel function $g$.

By Theorem \ref{thm_hit} and Theorem \ref{thm_conv}, $\nu$ satisfies the condition of Lemma \ref{lem_ac} with $F=\mathrm{supp}(\nu)=\dd X$. By above remark, we obtain a regular Dirichlet form $\check{\Ee}$ on $L^2(\dd X;\nu)$.

Third, we obtain explicit representation of $\check{\Ee}$ as follows.

\begin{mythm}\label{thm_check_E}
We have
$$
\begin{cases}
\check{\Ee}(u,u)\asymp\int_K\int_K\frac{(u(x)-u(y))^2}{|x-y|^{\alpha+\beta}}\nu(\md x)\nu(\md y)<+\infty,\\
\check{\Ff}=\myset{u\in C(K):\int_K\int_K\frac{(u(x)-u(y))^2}{|x-y|^{\alpha+\beta}}\nu(\md x)\nu(\md y)<+\infty},
\end{cases}
$$
where $\beta\in(\alpha,\beta^*)$.
\end{mythm}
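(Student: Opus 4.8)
The plan is to compute $\check{\Ee}(u,u)$ by unwinding the definition of the trace form and matching it with Silverstein's Na\"im-kernel form $\Ee_{\dd X}$ from Section \ref{sec_rw}. By the abstract trace construction, for $u$ in the extended Dirichlet space of $(\Ee_{\mybar{X}},\Ff_{\mybar{X}})$ one has $\check{\Ee}(u|_{\dd X},u|_{\dd X})=\Ee_{\mybar{X}}(H_{\dd X}u,H_{\dd X}u)$, where $H_{\dd X}$ is the hitting operator for $\dd X$. The first step would be to identify $H_{\dd X}u$ on $X$ with the Poisson integral $Hu$, which is exactly Equation (\ref{eqn_hit}). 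Since $H_{\dd X}u$ restricted to $X$ is the harmonic function $Hu$ and since $\Ee_{\mybar{X}}$ and $\Ee_X$ are given by the identical sum $\frac{1}{2}\sum_{x,y\in X}c(x,y)(\cdot)^2$, I would obtain $\Ee_{\mybar{X}}(H_{\dd X}u,H_{\dd X}u)=\Ee_X(Hu,Hu)$, which is precisely Silverstein's $\Ee_{\dd X}(u,u)$. Invoking the Na\"im-kernel representation together with the estimate (\ref{eqn_Theta}), namely $\Theta(x,y)\asymp|x-y|^{-(\alpha+\beta)}$ from \cite[Theorem 6.3]{KLW16}, then yields
$$\check{\Ee}(u,u)=\frac{1}{2}\pi(o)\int_{\dd X}\int_{\dd X}(u(x)-u(y))^2\Theta(x,y)\,\nu(\md x)\nu(\md y)\asymp\int_K\int_K\frac{(u(x)-u(y))^2}{|x-y|^{\alpha+\beta}}\,\nu(\md x)\nu(\md y),$$
under the identification $\dd X\cong K$.

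For the domain I would prove the two inclusions separately, with Lemma \ref{lem_ext} as the bridge between finite energy and continuity. For one inclusion, let $u\in C(K)$ have finite double integral; by the energy comparison above $\Ee_X(Hu,Hu)<+\infty$, so by Lemma \ref{lem_ext} (applicable since $\lambda<1/3$) the harmonic extension $Hu$ extends to a function in $C(\mybar{X})$, which is bounded on the compact space $\mybar{X}$ and hence lies in $\Ff_{\mybar{X}}$; its boundary trace is $u$, and since $\nu$ is finite $u\in L^2(\dd X;\nu)$, so $u\in\check{\Ff}$. Conversely, if $u\in\check{\Ff}=\check{\Ff}_e\cap L^2(\dd X;\nu)$, then $\check{\Ee}(u,u)=\Ee_{\mybar{X}}(H_{\dd X}u,H_{\dd X}u)<+\infty$, so again by Lemma \ref{lem_ext} the function $H_{\dd X}u$ lies in $C(\mybar{X})\cap\Ff_{\mybar{X}}$; restricting to $\dd X$ shows $u\in C(K)$, while the energy comparison shows the double integral is finite. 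This matches the claimed description of $\check{\Ff}$.

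The heaviest computation, the sharp two-sided estimate of the Na\"im kernel, is imported from \cite{KLW16}, so the main obstacle here is the identification $\check{\Ee}=\Ee_{\dd X}$ rather than the kernel bound. The point demanding most care is the first step: I must verify that the probabilistic hitting operator $H_{\dd X}$ coincides with the Poisson integral $H$ on the full function class in play (not merely on nonnegative Borel functions as recorded in (\ref{eqn_hit})) and that the resulting harmonic function has boundary trace equal to $u$ q.e.\ on $\dd X$, so that $\Ee_{\mybar{X}}(H_{\dd X}u,H_{\dd X}u)$ genuinely reduces to the $X$-energy of the harmonic extension. The domain identification similarly requires matching the abstract extended Dirichlet space trace $\check{\Ff}_e=(\Ff_{\mybar{X}})_e|_{\dd X}$ with continuous functions; here the continuity supplied by Lemma \ref{lem_ext} is essential, and one must check that q.e.-equivalence classes on $\dd X$ and genuine continuous representatives agree, which holds because $\nu$ charges no $\Ee_{\mybar{X}}$-polar set.
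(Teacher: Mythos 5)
Your proposal is correct and follows essentially the same route as the paper: identify $H_{\dd X}u$ with the Poisson integral $Hu$ via Equation (\ref{eqn_hit}), convert the energy through Silverstein's Na\"im-kernel form and the estimate (\ref{eqn_Theta}), and characterize the domain by passing back and forth through Lemma \ref{lem_ext}. The two points you flag as still needing verification are exactly what the paper supplies immediately before this theorem: the boundary-trace identity $v|_{\dd X}=u$ for the continuous extension of $Hu$ is Lemma \ref{lem_bdy_har}, and the identification of $\check{\Ff}_e=(\Ff_{\mybar{X}})_e|_{\dd X}$ with restrictions of continuous functions is Theorem \ref{thm_f_e} ($(\Ff_{\mybar{X}})_e=\Ff_{\mybar{X}}$).
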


To prove this theorem, we need some preparation.

\begin{mylem}\label{lem_bdy_har}
If $\lambda<1/3$, then for all $u\in C(\dd X)=C(K)$ with
$$\int_K\int_K\frac{(u(x)-u(y))^2}{|x-y|^{\alpha+\beta}}\nu(\md x)\nu(\md y)<+\infty,$$
let $v\in C(\mybar{X})$ be the extended function of $Hu$ in Lemma \ref{lem_ext}, we have $v|_{\dd X}=u$.
\end{mylem}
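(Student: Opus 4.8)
The plan is to produce the continuous extension $v$ of $Hu$ using Lemma~\ref{lem_ext}, and then to identify its boundary values with $u$ by passing through the random walk $\myset{Z_n}$ and a martingale convergence argument. The only preliminary point is to check that $Hu$ has finite energy, so that Lemma~\ref{lem_ext} actually applies. By the Na\"im kernel identity of Section~\ref{sec_rw},
$$\Ee_X(Hu,Hu)=\frac{1}{2}\pi(o)\int_{\dd X}\int_{\dd X}(u(x)-u(y))^2\Theta(x,y)\nu(\md x)\nu(\md y),$$
and since $\Theta(x,y)\asymp|x-y|^{-(\alpha+\beta)}$ by Equation~(\ref{eqn_Theta}), the hypothesis $\int_K\int_K\frac{(u(x)-u(y))^2}{|x-y|^{\alpha+\beta}}\nu(\md x)\nu(\md y)<+\infty$ forces $\frac{1}{2}\sum_{x,y\in X}c(x,y)(Hu(x)-Hu(y))^2<+\infty$. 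Because $\lambda<1/3$, Lemma~\ref{lem_ext} then yields $v\in C(\mybar{X})$ with $v|_X=Hu$; it remains to show $v|_{\dd X}=u$.

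Next I would set up the martingale. Since $u\in C(K)$ is bounded and, by Theorem~\ref{thm_conv}, $Hu(x)=\int_{\dd X}K(x,\cdot)u\,\md\nu=\EE_x[u(Z_\infty)]$, the function $Hu$ is a bounded harmonic function, and the Markov property gives
$$Hu(Z_n)=\EE_{Z_n}[u(Z_\infty)]=\EE_o[u(Z_\infty)\mid\sigma(Z_0,\ldots,Z_n)],\quad\PP_o\text{-a.s.}$$
Thus $\myset{Hu(Z_n)}$ is a closed (Doob) martingale under $\PP_o$. Since $Z_\infty=\lim_n Z_n$ is measurable with respect to the tail $\sigma$-algebra and $u(Z_\infty)$ is bounded, L\'evy's upward convergence theorem gives $Hu(Z_n)\to u(Z_\infty)$, $\PP_o$-a.s.

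Finally I would bridge the two descriptions of the boundary value. By Theorem~\ref{thm_conv} the path $\myset{Z_n}$ converges to $Z_\infty$ in the Martin compactification $\mybar{X}$, equivalently in the metric $\rho_a$ on $\hat{X}\cong\mybar{X}$. As $v$ is continuous on $\mybar{X}$ and agrees with $Hu$ on $X$, this yields $Hu(Z_n)=v(Z_n)\to v(Z_\infty)$, $\PP_o$-a.s. Comparing the two almost-sure limits gives $v(Z_\infty)=u(Z_\infty)$, $\PP_o$-a.s., and since the distribution of $Z_\infty$ under $\PP_o$ is $\nu$, we conclude $v=u$ $\nu$-a.e.\ on $\dd X$. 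Because $\nu$ is the (normalized) Hausdorff measure and hence has full support on $K$, while both $u$ and $v$ are continuous, the a.e.\ equality upgrades to $v=u$ everywhere on $\dd X$.

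The work here is conceptual rather than computational. The delicate point I expect to be the main obstacle is that Lemma~\ref{lem_ext} \emph{defines} $v(\xi)$ as a limit of $Hu$ along a \emph{geodesic ray} to $\xi$, whereas the random walk approaches $Z_\infty$ along a generic, non-geodesic path; a priori these need not give the same limit. What reconciles them is precisely that $v$ is continuous on \emph{all} of $\mybar{X}$, together with the fact that $\myset{Z_n}$ converges to $Z_\infty$ in the very topology ($\rho_a$) in which $v$ is continuous. The only external input is the Na\"im kernel estimate~(\ref{eqn_Theta}), invoked once to guarantee finiteness of the energy of $Hu$ and thereby the applicability of Lemma~\ref{lem_ext}.
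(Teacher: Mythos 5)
Your proposal is correct, but it takes a genuinely different route from the paper's. The paper argues analytically: after the same reduction via the Na\"im kernel identity and Lemma~\ref{lem_ext}, it proves the stronger pointwise statement that $Hu(x_n)\to u(\xi)$ for \emph{every} sequence $x_n\to\xi$ in $\dd X$, by writing $Hu(x_n)-u(\xi)=\int_{\dd X}K(x_n,\eta)(u(\eta)-u(\xi))\,\nu(\md\eta)$ (using $H1=1$), splitting the integral at $\theta_a(\eta,\xi)\ge\delta$, and killing the far part with the explicit Martin kernel estimate~(\ref{eqn_Martin}), which gives $K(x_n,\eta)\lesssim\lambda^{|x_n|}\delta^{-\frac{1}{a}\log(3/\lambda)}\to0$. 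You instead argue probabilistically: $Hu(Z_n)=\EE_o[u(Z_\infty)\mid\sigma(Z_0,\dots,Z_n)]$ is a closed Doob martingale, L\'evy's upward theorem gives $Hu(Z_n)\to u(Z_\infty)$ a.s., while continuity of $v$ on $\mybar{X}$ together with $Z_n\to Z_\infty$ (Theorem~\ref{thm_conv}) gives $v(Z_n)\to v(Z_\infty)$ a.s., whence $v=u$ $\nu$-a.e.\ and then everywhere by continuity and the full support of $\nu$. Your argument is softer and avoids the quantitative kernel estimate~(\ref{eqn_Martin}) entirely, which makes it more portable to settings where such estimates are unavailable; the price is that it only yields an a.e.\ identity that must be upgraded using the $\alpha$-regularity (full support) of $\nu$ and the continuity of both $u$ and $v$, whereas the paper's computation identifies the boundary limit at every point directly and with an explicit rate. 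You also correctly isolated the one genuinely delicate point, namely that Lemma~\ref{lem_ext} defines $v(\xi)$ along geodesic rays while $Z_n$ approaches $Z_\infty$ along arbitrary paths, and your resolution via continuity of $v$ on all of $\mybar{X}$ is exactly right.
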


We need a calculation result from \cite[Theorem 5.3]{KLW17} as follows.
\begin{equation}\label{eqn_Martin}
K(x,\xi)\asymp\lambda^{|x|-|x\wedge\xi|}(\frac{1}{2})^{-\frac{\log3}{\log2}|x\wedge\xi|}=\lambda^{|x|}\left(\frac{3}{\lambda}\right)^{|x\wedge\xi|},
\end{equation}
where $x\in X$ and $\xi\in\dd X$.

\begin{proof}
By estimate of Na\"im kernel, we have
$$\sum_{x,y\in X}c(x,y)(Hu(x)-Hu(y))^2<+\infty,$$
hence Lemma \ref{lem_ext} can be applied here and $v$ is well-defined. We only need to show that for all $\myset{x_n}\subseteq X$ and $\xi\in\dd X$ with $x_n\to\xi$, then $Hu(x_n)\to u(\xi)$ as $n\to+\infty$. Indeed, since
$$Hu(x)=\int_{\dd X}K(x,\eta)u(\eta)\nu(\md\eta)=\int_{\dd X}u(\eta)\nu_x(\md\eta)=\EE_x\left[u(Z_\infty)\right],$$
we have
$$H1(x)=\int_{\dd X}K(x,\eta)\nu(\md\eta)=1$$
for all $x\in X$. Then
$$
\begin{aligned}
\lvert Hu(x_n)-u(\xi)\rvert&=\lvert\int_{\dd X}K(x_n,\eta)u(\eta)\nu(\md\eta)-u(\xi)\rvert=\lvert\int_{\dd X}K(x,\eta)(u(\eta)-u(\xi))\nu(\md\eta)\rvert\\
&\le\int_{\dd X}K(x_n,\eta)|u(\eta)-u(\xi)|\nu(\md\eta).
\end{aligned}
$$
Since $u\in C(\dd X)$, for all $\veps>0$, there exists $\delta>0$ such that for all $\eta,\xi\in\dd X$ with $\theta_a(\eta,\xi)<\delta$, we have $|u(\eta)-u(\xi)|<\veps$. Assume that $|u(x)|\le M<+\infty$ for all $x\in\dd X$, then
$$
\begin{aligned}
&\lvert Hu(x_n)-u(\xi)\rvert\\
&\le\int_{\theta_a(\eta,\xi)<\delta}K(x_n,\eta)|u(\eta)-u(\xi)|\nu(\md\eta)+\int_{\theta_a(\eta,\xi)\ge\delta}K(x_n,\eta)|u(\eta)-u(\xi)|\nu(\md\eta)\\
&<\veps\int_{\theta_a(\eta,\xi)<\delta}K(x_n,\eta)\nu(\md\eta)+2M\int_{\theta_a(\eta,\xi)\ge\delta}K(x_n,\eta)\nu(\md\eta)\\
&\le\veps+2M\int_{\theta_a(\eta,\xi)\ge\delta}K(x_n,\eta)\nu(\md\eta).
\end{aligned}
$$
There exists $N\ge1$ such that for all $n>N$, we have $\theta_a(x_n,\xi)<\delta/2$, then for all $\theta_a(\eta,\xi)\ge\delta$
$$\theta_a(x_n,\eta)\ge\theta_a(\eta,\xi)-\theta_a(x_n,\xi)\ge\delta-\frac{\delta}{2}=\frac{\delta}{2}.$$
By Equation (\ref{eqn_Martin}), we have
$$
\begin{aligned}
K(x_n,\eta)&\asymp \lambda^{|x_n|}\left(\frac{3}{\lambda}\right)^{|x_n\wedge\eta|}=\lambda^{|x_n|}e^{|x_n\wedge\eta|\log(\frac{3}{\lambda})}=\lambda^{|x_n|}e^{-a|x_n\wedge\eta|\frac{1}{a}\log(\frac{\lambda}{3})}\\
&=\lambda^{|x_n|}\rho_a(x_n,\eta)^{\frac{1}{a}\log(\frac{\lambda}{3})}=\frac{\lambda^{|x_n|}}{\rho_a(x_n,\eta)^{\frac{1}{a}\log(\frac{3}{\lambda})}}.
\end{aligned}
$$
Since $\rho_a$ and $\theta_a$ are equivalent, there exists some positive constant $C$ independent of $x_n$ and $\eta$ such that
$$K(x_n,\eta)\le C\frac{\lambda^{|x_n|}}{\delta^{\frac{1}{a}\log(\frac{3}{\lambda})}}.$$
Hence
$$\lvert Hu(x_n)-u(\xi)\rvert\le\veps+2M\int_{\theta_a(\eta,\xi)\ge\delta}C\frac{\lambda^{|x_n|}}{\delta^{\frac{1}{a}\log(\frac{3}{\lambda})}}\nu(\md\eta)\le\veps+2MC\frac{\lambda^{|x_n|}}{\delta^{\frac{1}{a}\log(\frac{3}{\lambda})}},$$
letting $n\to+\infty$, we have $|x_n|\to+\infty$, hence
$$\varlimsup_{n\to+\infty}\lvert Hu(x_n)-u(\xi)\rvert\le\veps$$
for all $\veps>0$. Since $\veps>0$ is arbitrary, we have $\lim_{n\to+\infty}Hu(x_n)=u(\xi)$.
\end{proof}

\begin{mythm}\label{thm_f_e}
$(\Ff_{\mybar{X}})_e=\Ff_{\mybar{X}}$, here we use the convention that functions in extended Dirichlet spaces are quasi continuous.
\end{mythm}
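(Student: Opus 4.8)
The plan is to prove the two inclusions separately. The inclusion $\Ff_{\mybar{X}}\subseteq(\Ff_{\mybar{X}})_e$ is immediate from the definition of the extended Dirichlet space: for $u\in\Ff_{\mybar{X}}$ the constant sequence $u_n=u$ is $\Ee_{\mybar{X}}$-Cauchy and converges to $u$, so $u\in(\Ff_{\mybar{X}})_e$. The real content is therefore the reverse inclusion $(\Ff_{\mybar{X}})_e\subseteq\Ff_{\mybar{X}}$.

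First I would take an arbitrary $u\in(\Ff_{\mybar{X}})_e$ and fix an $\Ee_{\mybar{X}}$-Cauchy sequence $\myset{u_n}\subseteq\Ff_{\mybar{X}}$ with $u_n\to u$ $m$-a.e. on $\mybar{X}$. The key elementary observation is that every singleton $\myset{x}$ with $x\in X$ has mass $m(\myset{x})=(c/3\lambda)^{|x|}>0$, while $m(\dd X)=0$; hence any $m$-null set is contained in $\dd X$, and $m$-a.e. convergence forces $u_n(x)\to u(x)$ for \emph{every} $x\in X$. Thus on $X$ the limit $u$ is determined pointwise.

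Next I would bound the energy of $u$. Repeating the Fatou's lemma argument already used in the proof of Theorem \ref{thm_main}, applied to the $\Ee_{\mybar{X}}$-Cauchy sequence $\myset{u_n}$, I obtain
$$\frac{1}{2}\sum_{x,y\in X}c(x,y)(u(x)-u(y))^2<+\infty,$$
so $u|_X$ has finite energy. Since $\lambda<1/3$, Lemma \ref{lem_ext} then applies and $u|_X$ extends to a function in $C(\mybar{X})$; because $m(\dd X)=0$, this continuous extension coincides with $u$ $m$-a.e. and is therefore the (quasi-)continuous representative of $u$. As $\mybar{X}$ is compact this representative is bounded, hence lies in $L^2(\mybar{X};m)$, and it has finite $\Ee_{\mybar{X}}$-energy by the bound above. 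Consequently $u\in\Ff_{\mybar{X}}$, which establishes $(\Ff_{\mybar{X}})_e\subseteq\Ff_{\mybar{X}}$ and hence the theorem.

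The only genuinely delicate point is the identification of the $m$-equivalence class of $u$ with an honest continuous function on $\mybar{X}$: a priori $u$ is specified only off an $m$-null subset of $\dd X$, so one must argue that its restriction to the full-measure set $X$ already pins down a continuous extension to all of $\mybar{X}$. This is exactly what the combination of the positive-mass-points observation (pointwise convergence on $X$) and Lemma \ref{lem_ext} (continuity of the finite-energy extension) supplies, and it is compatible with the stated convention that elements of the extended Dirichlet space are taken quasi-continuous.
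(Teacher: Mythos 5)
Your proposal is correct and follows essentially the same route as the paper's own proof: pointwise convergence on $X$ because every point of $X$ carries positive $m$-mass, Fatou's lemma to bound the energy of the limit, Lemma \ref{lem_ext} to produce the continuous extension, and identification of the quasi-continuous versions since they agree $m$-a.e. (the paper phrases this last step as ``$u=v$ q.e.''). The additional remarks about boundedness and $L^2$-membership are harmless but not needed, since $\Ff_{\mybar{X}}$ is defined by continuity on the compact space $\mybar{X}$ plus finiteness of the energy.
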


\begin{proof}
It is obvious that $(\Ff_{\mybar{X}})_e\supseteq\Ff_{\mybar{X}}$. For all $u\in(\Ff_{\mybar{X}})_e$, by definition, there exists an $\Ee_{\mybar{X}}$-Cauchy sequence $\myset{u_n}\subseteq\Ff_{\mybar{X}}$ that converges to $u$ $m$-a.e.. Hence $u_n(x)\to u(x)$ for all $x\in X$. By Fatou's lemma, we have
$$
\begin{aligned}
\frac{1}{2}\sum_{x,y\in X}c(x,y)(u(x)-u(y))^2&=\frac{1}{2}\sum_{x,y\in X}\lim_{n\to+\infty}c(x,y)(u(x)-u(y))^2\\
&\le\varliminf_{n\to+\infty}\frac{1}{2}\sum_{x,y\in X}c(x,y)(u_n(x)-u_n(y))^2\\
&=\varliminf_{n\to+\infty}\Ee_{\mybar{X}}(u_n,u_n)\\
&<+\infty.
\end{aligned}
$$
By Lemma \ref{lem_ext}, $u|_X$ can be extended to a continuous function $v$ on $\mybar{X}$. Since $u,v$ are quasi continuous on $\mybar{X}$ and $u=v,m$-a.e., we have $u=v$ q.e., we can take $u$ as $v$. Hence $u$ can be taken continuous, $u\in\Ff_{\mybar{X}}$, $(\Ff_{\mybar{X}})_e\subseteq\Ff_{\mybar{X}}$.
\end{proof}

\begin{myrmk}
It is proved in \cite[Proposition 2.9]{HK06} that a result of above type holds in much more general frameworks.
\end{myrmk}

\begin{proof}[Proof of Theorem \ref{thm_check_E}]
By Equation (\ref{eqn_hit}) and Equation (\ref{eqn_Theta}), we have
$$
\begin{aligned}
\check{\Ee}(u|_{\dd X},u|_{\dd X})&=\Ee_{\mybar{X}}(H_{\dd X}u,H_{\dd X}u)=\Ee_{\mybar{X}}(Hu,Hu)\\
&=\frac{1}{2}\sum_{x,y\in X}c(x,y)(Hu(x)-Hu(y))^2\\
&\asymp\int_K\int_K\frac{(u(x)-u(y))^2}{|x-y|^{\alpha+\beta}}\nu(\md x)\nu(\md y),
\end{aligned}
$$
and
$$
\begin{aligned}
\check{\Ff}&=(\Ff_{\mybar{X}})_e|_{\dd X}\cap L^2(\dd X;\nu)=\Ff_{\mybar{X}}|_{\dd X}\cap L^2(\dd X;\nu)\\
&=\myset{u|_{\dd X}\in L^2(\dd X;\nu):u\in C(\mybar{X}),\sum_{x,y\in X}c(x,y)(u(x)-u(y))^2<+\infty}\\
&=\myset{u|_{\dd X}:u\in C(\mybar{X}),\sum_{x,y\in X}c(x,y)(u(x)-u(y))^2<+\infty}.
\end{aligned}
$$
For all $u|_{\dd X}\in\check{\Ff}$, we have $H_{\dd X}u=Hu\in(\Ff_{\mybar{X}})_e=\Ff_{\mybar{X}}$, $u|_{\dd	X}\in C(\dd X)=C(K)$ and
$$
\begin{aligned}
\int_K\int_K\frac{(u(x)-u(y))^2}{|x-y|^{\alpha+\beta}}\nu(\md x)\nu(\md y)&\asymp\frac{1}{2}\sum_{x,y\in X}c(x,y)(Hu(x)-Hu(y))^2\\
&=\Ee_{\mybar{X}}(Hu,Hu)<+\infty,
\end{aligned}
$$
that is, $\check{\Ff}\subseteq\text{RHS}$. On the other hand, for all $u\in\text{RHS}$, we have $Hu$ satisfies
$$\frac{1}{2}\sum_{x,y\in X}c(x,y)(Hu(x)-Hu(y))^2\asymp\int_K\int_K\frac{(u(x)-u(y))^2}{|x-y|^{\alpha+\beta}}\nu(\md x)\nu(\md y)<+\infty.$$
By Lemma \ref{lem_ext}, $Hu$ can be extended to a continuous function $v$ on $\mybar{X}$, then $v\in C(\mybar{X})$, by Lemma \ref{lem_bdy_har}, we have $v|_{\dd X}=u$.
$$\frac{1}{2}\sum_{x,y\in X}c(x,y)(v(x)-v(y))^2=\frac{1}{2}\sum_{x,y\in X}c(x,y)(Hu(x)-Hu(y))^2<+\infty,$$
hence $v\in\Ff_{\mybar{X}}$, $u\in\check{\Ff}$, $\text{RHS}\subseteq\check{\Ff}$.
\end{proof}

Then we have following corollary.

\begin{mycor}\label{cor_E_K}
Let
$$
\begin{cases}
&\Ee_K(u,u)=\int_K\int_K\frac{(u(x)-u(y))^2}{|x-y|^{\alpha+\beta}}\nu(\md x)\nu(\md y),\\
&\Ff_K=\myset{u\in L^2(K;\nu):\int_K\int_K\frac{(u(x)-u(y))^2}{|x-y|^{\alpha+\beta}}\nu(\md x)\nu(\md y)<+\infty}.
\end{cases}
$$
If $\beta\in(\alpha,\beta^*)$, then $(\Ee_K,\Ff_K)$ is a regular Dirichlet form on $L^2(K;\nu)$.
\end{mycor}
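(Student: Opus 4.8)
The plan is to deduce the corollary from Theorem \ref{thm_check_E}, which already provides a \emph{regular} Dirichlet form $(\check{\Ee},\check{\Ff})$ on $L^2(\dd X;\nu)=L^2(K;\nu)$ satisfying $\check{\Ee}(u,u)\asymp\Ee_K(u,u)$ and
$$
\check{\Ff}=\myset{u\in C(K):\int_K\int_K\frac{(u(x)-u(y))^2}{|x-y|^{\alpha+\beta}}\nu(\md x)\nu(\md y)<+\infty}.
$$
Since the two energies are comparable, the entire statement reduces to the identity of domains $\Ff_K=\check{\Ff}$. Once this is known, the Dirichlet-form axioms transfer to $(\Ee_K,\Ff_K)$: closedness holds because the norms $(\Ee_K)_1$ and $(\check{\Ee})_1$ are equivalent and hence have the same Cauchy sequences and the same completeness; the Markovian property is immediate from the explicit integral, because every normal contraction of $u$ diminishes $(u(x)-u(y))^2$ pointwise and therefore diminishes $\Ee_K(u,u)$; and regularity follows because $\Ff_K=\check{\Ff}\subseteq C(K)$, so that $\Ff_K\cap C(K)=\Ff_K$ is trivially $(\Ee_K)_1$-dense in $\Ff_K$ and, by the regularity of $\check{\Ee}$ together with the comparability of norms, uniformly dense in $C(K)$.

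The inclusion $\check{\Ff}\subseteq\Ff_K$ is immediate: a function in $\check{\Ff}$ is continuous on the compact set $K$, hence lies in $L^2(K;\nu)$, and has finite $\Ee_K$-energy since $\Ee_K\asymp\check{\Ee}$. The content is the reverse inclusion $\Ff_K\subseteq\check{\Ff}$, i.e.\ that every $u\in L^2(K;\nu)$ with $\Ee_K(u,u)<+\infty$ admits a continuous version. This is a Morrey--Sobolev embedding, and it is exactly here that the hypothesis $\beta>\alpha$ enters (recall $\beta>\alpha\iff\lambda<1/3$, which is precisely the condition used for the continuity statements of Lemma \ref{lem_ext} and Lemma \ref{lem_bdy_har}).

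To prove the embedding, fix $u\in\Ff_K$ and for $x\in K$, $r>0$ write $\bar{u}_{x,r}$ for the $\nu$-average of $u$ over the metric ball $B(x,r)$. Telescoping through two scales gives
$$
|\bar{u}_{x,r}-\bar{u}_{x,r/2}|\le\frac{1}{\nu(B(x,r/2))\,\nu(B(x,r))}\int_{B(x,r/2)}\int_{B(x,r)}|u(z)-u(w)|\,\nu(\md w)\nu(\md z).
$$
Applying Cauchy--Schwarz with the weight $|z-w|^{-(\alpha+\beta)}$, using $|z-w|\le 2r$ on $B(x,r)$ together with the $\alpha$-regularity $\nu(B(x,r))\asymp r^{\alpha}$, one obtains a constant $C>0$ with
$$
|\bar{u}_{x,r}-\bar{u}_{x,r/2}|\le C\,r^{(\beta-\alpha)/2}\left(\int_{B(x,r)}\int_{B(x,r)}\frac{(u(z)-u(w))^2}{|z-w|^{\alpha+\beta}}\nu(\md w)\nu(\md z)\right)^{1/2}.
$$
The local energy on the right is dominated by $\Ee_K(u,u)<+\infty$, so, since $\beta>\alpha$, summing over the dyadic scales $r=2^{-k}$ produces a convergent geometric series; hence $\bar{u}_{x,2^{-k}}$ is Cauchy and converges to a limit $\tilde{u}(x)$. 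The same estimates applied to nearby base points show $\tilde{u}$ is H\"older continuous of order $(\beta-\alpha)/2$, and the Lebesgue differentiation theorem (valid because the doubling measure $\nu$ supports it) gives $\tilde{u}=u$ $\nu$-a.e.. Thus $u$ has a continuous version and $u\in\check{\Ff}$, establishing $\Ff_K=\check{\Ff}$.

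The main obstacle is this Morrey embedding $\Ff_K\subseteq C(K)$; everything else is a routine transfer of the axioms from $(\check{\Ee},\check{\Ff})$ via $\Ee_K\asymp\check{\Ee}$ and the domain identity. The embedding hinges on the summability of $\sum_k 2^{-k(\beta-\alpha)/2}$, which fails precisely when $\beta\le\alpha$, matching the lower restriction $\beta>\alpha$. As an alternative to the intrinsic Morrey argument one may instead run the embedding on the graph side: for $u\in\Ff_K$ the Poisson integral $Hu$ satisfies $\Ee_{\mybar{X}}(Hu,Hu)\asymp\Ee_K(u,u)<+\infty$ by the Na\"im-kernel estimate (\ref{eqn_Theta}), so Lemma \ref{lem_ext} extends $Hu$ continuously to $\mybar{X}$; identifying the boundary trace of this extension with $u$ then requires a Fatou-type boundary-limit theorem for $Hu$, which for continuous data is exactly Lemma \ref{lem_bdy_har}.
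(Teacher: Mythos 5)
Your proposal is correct and follows essentially the same route as the paper: both reduce the corollary to Theorem \ref{thm_check_E} and then identify the $L^2$-domain with the continuous domain $\check{\Ff}$ via a H\"older/Morrey embedding, which is exactly where $\beta>\alpha$ is used. The only difference is that the paper simply cites \cite[Theorem 4.11(iii)]{GHL03} for this embedding, whereas you prove it in-line by the telescoping-averages argument (and you are somewhat more explicit than the paper about transferring the Dirichlet-form axioms across the comparability $\check{\Ee}\asymp\Ee_K$); both steps are sound.
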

\begin{proof}
Let
$$
\begin{cases}
&\Ee_K(u,u)=\int_K\int_K\frac{(u(x)-u(y))^2}{|x-y|^{\alpha+\beta}}\nu(\md x)\nu(\md y),\\
&\Ff_K=\myset{u\in C(K):\int_K\int_K\frac{(u(x)-u(y))^2}{|x-y|^{\alpha+\beta}}\nu(\md x)\nu(\md y)<+\infty}.
\end{cases}
$$
By Theorem \ref{thm_check_E}, if $\beta\in(\alpha,\beta^*)$, then $(\Ee_K,\Ff_K)$ is a regular Dirichlet form on $L^2(K;\nu)$. We only need to show that
$$\Ff_K=\myset{u\in L^2(K;\nu):\int_K\int_K\frac{(u(x)-u(y))^2}{|x-y|^{\alpha+\beta}}\nu(\md x)\nu(\md y)<+\infty}.$$
Indeed, it is obvious that $\Ff_K\subseteq\text{RHS}$. On the other hand, since $\beta\in(\alpha,\beta^*)$, by \cite[Theorem 4.11 (\rmnum{3})]{GHL03}, $\text{RHS}$ can be embedded into a H\"older space with parameter $(\beta-\alpha)/2$, hence the functions in $\text{RHS}$ can be modified to be continuous, $\text{RHS}\subseteq\Ff_K$.
\end{proof}

\begin{myrmk}
A more general result is proved in \cite{Kum03}.
\end{myrmk}

\section{Triviality of $\Ff_K$ when $\beta\in(\beta^*,+\infty)$}\label{sec_trivial}

In this section, we show that $\Ff_K$ consists of constant functions if $\lambda\in(0,1/5)$ or $\beta\in(\beta^*,+\infty)$. Hence $d_w=\beta^*=\log5/\log2$.

\begin{mythm}\label{thm_constant}
If $\lambda<{1}/{5}$, then for all continuous function $u$ on $\mybar{X}$ with
\begin{equation}\label{eqn_finite}
C=\frac{1}{2}\sum_{x,y\in X}c(x,y)(u(x)-u(y))^2<+\infty,
\end{equation}
we have $u|_{\dd X}$ is constant.
\end{mythm}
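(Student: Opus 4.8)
The plan is to prove this by showing that, when $\lambda<1/5$, the effective resistance of the network $(X,c)$ between any two boundary points vanishes, so that no function of finite energy can separate two boundary points. Throughout write the energy as $C=\frac12\sum_{x,y\in X}c(x,y)(u(x)-u(y))^2$ as in the statement. For two nodes $a,b\in X$ I would use the standard electrical inequality $|u(a)-u(b)|^2\le R_{\mathrm{eff}}(a,b)\,C$, where $R_{\mathrm{eff}}(a,b)$ is the effective resistance in $(X,c)$, together with the monotonicity of effective resistance under removal of edges: $R_{\mathrm{eff}}(a,b)$ is bounded above by the resistance computed in any subnetwork containing $a,b$. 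First I would reduce the theorem to the three corners: it suffices to prove $u(p_0)=u(p_1)=u(p_2)$, because the identical argument applied inside each cell-subtree $G_w$ (the descendants of $w$, a rescaled copy of $X$) forces the three boundary corners $f_w(p_0),f_w(p_1),f_w(p_2)$ of $K_w$ to carry a common value; since adjacent cells share a corner and the points $\myset{f_w(p_i)}$ are dense in $K$, continuity of $u|_{\dd X}$ then gives constancy.

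To estimate the resistance, fix $N$ and consider the nodes $0^N=\underbrace{0\cdots0}_{N}$ and $1^N$ in $S_N$. As $N\to\infty$ these converge in $\mybar X$ to $p_0$ and $p_1$ (the fixed points of $f_0,f_1$), so $u(0^N)\to u(p_0)$ and $u(1^N)\to u(p_1)$ by continuity of $u$ on $\mybar X$. I would bound $R_{\mathrm{eff}}(0^N,1^N)$ from above by the resistance $R_{S_N}$ between these two nodes inside the single sphere $S_N$ equipped only with its horizontal edges, which is a subnetwork of $X$. After collapsing the type-\Rmnum{2} edges, $S_N$ becomes the level-$N$ approximation $V_N$ of the gasket, whose edge conductances are all comparable to $(3\lambda)^{-N}$; hence $R_{S_N}$ equals $(3\lambda)^N$ times the combinatorial corner-to-corner resistance of $V_N$, up to a constant. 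That combinatorial resistance is the classical Sierpi\'nski resistance, which grows like $(5/3)^N$ --- the reciprocal of the energy renormalisation factor $1/(5\lambda)$ established in the proof of Theorem \ref{thm_main} via the $\Delta$--$Y$ computation. Thus $R_{S_N}\asymp(3\lambda)^N(5/3)^N=(5\lambda)^N$, and
$$|u(0^N)-u(1^N)|^2\le R_{\mathrm{eff}}(0^N,1^N)\,C\le R_{S_N}\,C\le \mathrm{const}\cdot(5\lambda)^N C.$$
Since $\lambda<1/5$ means $5\lambda<1$, letting $N\to\infty$ forces $|u(p_0)-u(p_1)|^2=0$, and by symmetry $u(p_0)=u(p_1)=u(p_2)$.

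I expect the main obstacle to be the resistance estimate $R_{S_N}\asymp(5\lambda)^N$ in the genuine graph $S_N$: one must run the gasket resistance renormalisation on each sphere while accounting for the type-\Rmnum{2} edges, which \emph{connect} (rather than identify) the two nodes lying over each common point of adjacent small triangles. Because the type-\Rmnum{2} conductances are of the same order $(3\lambda)^{-N}$ as the type-\Rmnum{1} ones, this perturbs the resistance only by a bounded factor, but that comparison must be made precise. Secondary points are the justification of the electrical inequality and of $R_{\mathrm{eff}}\le R_{S_N}$ on the infinite network, which I would handle by a finite exhaustion of $X$ using that $u\in(\Ff_{\mybar X})_e$, and the self-similar propagation in the reduction, for which the key observation is that $u$ restricted to $G_w\cong X$ still has finite energy and that the resistance between $w0^N$ and $w1^N$ scales like $(3\lambda)^{|w|}(5\lambda)^{N}\to0$.
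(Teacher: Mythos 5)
Your argument is correct in substance but follows a genuinely different route from the paper's. The paper argues by contradiction directly with energies: assuming $u|_{\dd X}$ non-constant, it considers the horizontal energies of the projected function $u\circ\Phi_n$ on the spheres $S_n$, shows via the $\Delta$--$Y$ minimisation from the proof of Theorem \ref{thm_main} that these grow at least like $(5\lambda)^{-n}$, and then transfers this growth to the true horizontal energies $\sum_{x,y\in S_n}c(x,y)(u(x)-u(y))^2$ by bounding $\sum_{x\in S_n}(3\lambda)^{-n}(u(x)-u(\Phi_n(x)))^2$ through the vertical edges along geodesic rays; this contradicts (\ref{eqn_finite}). You instead dualise: the same renormalisation factor $1/(5\lambda)$ becomes the statement that the corner-to-corner resistance of $S_N$ is $\asymp(5\lambda)^N\to0$, whence $|u(0^N)-u(1^N)|^2\le R_{\mathrm{eff}}(0^N,1^N)\cdot 2C\to0$, and constancy is propagated by self-similarity, adjacency of cells and density of $\bigcup_n V_n$. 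Both proofs rest on the identical $\Delta$--$Y$ computation; yours buys a cleaner conceptual statement (the resistance between boundary points vanishes) at the cost of the reduction-and-propagation step and of the comparison between $S_N$ and the collapsed graph $V_N$, while the paper's avoids effective resistance but needs the vertical-edge comparison between $u$ and $u\circ\Phi_n$.

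The one step you flag --- that replacing the identifications by type-\Rmnum{2} edges of conductance $C_2(3\lambda)^{-N}$ changes the resistance only by a bounded factor --- is indeed the crux, and the direction matters: shorting the type-\Rmnum{2} edges only yields the lower bound $R_{S_N}\ge R_{V_N}$, whereas you need the upper bound. It is fillable: take the optimal unit flow in $V_N$ and route it through $S_N$; at each split vertex the flow forced through the type-\Rmnum{2} edge equals the net flow between the two adjacent small triangles, whose square is at most twice the sum of squares of the flows on the two incident type-\Rmnum{1} edges of either triangle, so by Thomson's principle the extra energy is at most a constant multiple (depending only on $C_1/C_2$) of the $V_N$ energy, giving $R_{S_N}\le(1+4C_1/C_2)\,R_{V_N}\asymp(5\lambda)^N$. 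The remaining ingredients --- the variational inequality $|u(a)-u(b)|^2\le R_{\mathrm{eff}}(a,b)\cdot 2C$ and the monotonicity $R_{\mathrm{eff}}^{X}(a,b)\le R_{\mathrm{eff}}^{S_N}(a,b)$ --- are elementary from the potential-based definition of effective resistance (restricting the energy sum to edges of $S_N$ only decreases it), so no exhaustion of the infinite network is actually needed.
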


\begin{proof}
By Lemma \ref{lem_ext}, Equation (\ref{eqn_finite}) implies that $u|_X$ can be extended continuously to $\mybar{X}$ which is exactly $u$ on $\mybar{X}$. Assume that $u|_{\dd X}$ is not constant. First, we consider
$$\sum_{x,y\in S_n}c(x,y)(u(\Phi_n(x))-u(\Phi_n(y)))^2.$$
By the proof of Theorem \ref{thm_main}, we have
$$\sum_{x,y\in S_{n+1}}c(x,y)(u(\Phi_{n+1}(x))-u(\Phi_{n+1}(y)))^2\ge\frac{1}{5\lambda}\sum_{x,y\in S_n}c(x,y)(u(\Phi_n(x))-u(\Phi_n(y)))^2.$$
Since $u|_{\dd X}$ is continuous on $\dd X$ and $u|_{\dd X}$ is not constant, there exists $N\ge1$ such that
$$\sum_{x,y\in S_N}c(x,y)(u(\Phi_N(x))-u(\Phi_N(y)))^2>0.$$
Since $\lambda<1/5$, for all $n\ge N$, we have
$$
\begin{aligned}
&\sum_{x,y\in S_n}c(x,y)(u(\Phi_n(x))-u(\Phi_n(y)))^2\\
&\ge\frac{1}{(5\lambda)^{n-N}}\sum_{x,y\in S_N}c(x,y)(u(\Phi_N(x))-u(\Phi_N(y)))^2\to+\infty,
\end{aligned}
$$
as $n\to+\infty$. Next, we consider the relation between
$$\sum_{x,y\in S_n}c(x,y)(u(x)-u(y))^2\text{ and }\sum_{x,y\in S_n}c(x,y)(u(\Phi_n(x))-u(\Phi_n(y)))^2.$$
Indeed
$$
\begin{aligned}
&\sum_{x,y\in S_n}c(x,y)(u(x)-u(y))^2\\
&\le\sum_{x,y\in S_n}c(x,y)\left(\lvert u(x)-u(\Phi_n(x))\rvert+\lvert u(\Phi_n(x))-u(\Phi_n(y))\rvert+\lvert u(\Phi_n(y))-u(y)\rvert\right)^2\\
&\le3\sum_{x,y\in S_n}c(x,y)\left((u(x)-u(\Phi_n(x)))^2+(u(\Phi_n(x))-u(\Phi_n(y)))^2+(u(\Phi_n(y))-u(y))^2\right)\\
&=3\sum_{x,y\in S_n}c(x,y)(u(\Phi_n(x))-u(\Phi_n(y)))^2\\
&+3\sum_{x,y\in S_n}c(x,y)\left((u(x)-u(\Phi_n(x)))^2+(u(\Phi_n(y))-u(y))^2\right).
\end{aligned}
$$
For all $x\in S_n$, there are at most 3 elements $y\in S_n$ such that $c(x,y)>0$ and for all $x,y\in S_n$, $c(x,y)\le\max{\myset{C_1,C_2}}/(3\lambda)^n$. By symmetry, we have
$$
\begin{aligned}
&\sum_{x,y\in S_n}c(x,y)\left((u(x)-u(\Phi_n(x)))^2+(u(\Phi_n(y))-u(y))^2\right)\\
&=2\sum_{x,y\in S_n}c(x,y)(u(x)-u(\Phi_n(x)))^2\\
&\le6\sum_{x\in S_n}\frac{\max{\myset{C_1,C_2}}}{(3\lambda)^n}(u(x)-u(\Phi_n(x)))^2.
\end{aligned}
$$
For all $x\in S_n$, there exists a geodesic ray $[x_0,x_1,\ldots]$ with $|x_k|=k$, $x_k\sim x_{k+1}$ for all $k\ge0$ and $x_n=x$, $x_k\to\Phi_n(x)$ as $k\to+\infty$. For distinct $x,y\in S_n$, the corresponding geodesic rays $[x_0,x_1,\ldots]$, $[y_0,y_1,\ldots]$ satisfy $x_k\ne y_k$ for all $k\ge n$. Then
$$
\begin{aligned}
\frac{1}{(3\lambda)^n}(u(x)-u(\Phi_n(x)))^2&\le\frac{1}{(3\lambda)^n}\left(\sum_{k=n}^\infty|u(x_k)-u(x_{k+1})|\right)^2\\
&=\left(\sum_{k=n}^\infty\frac{1}{(3\lambda)^{n/2}}|u(x_k)-u(x_{k+1})|\right)^2\\
&=\left(\sum_{k=n}^\infty{(3\lambda)^{(k-n)/2}}\frac{1}{(3\lambda)^{k/2}}|u(x_k)-u(x_{k+1})|\right)^2\\
&\le\sum_{k=n}^\infty(3\lambda)^{k-n}\sum_{k=n}^\infty\frac{1}{(3\lambda)^k}(u(x_k)-u(x_{k+1}))^2\\
&=\frac{1}{1-3\lambda}\sum_{k=n}^\infty c(x_k,x_{k+1})(u(x_k)-u(x_{k+1}))^2,
\end{aligned}
$$
hence
$$
\begin{aligned}
\sum_{x\in S_n}\frac{1}{(3\lambda)^n}(u(x)-u(\Phi_n(x)))^2&\le\frac{1}{1-3\lambda}\sum_{x\in S_n}\sum_{k=n}^\infty c(x_k,x_{k+1})(u(x_k)-u(x_{k+1}))^2\\
&\le\frac{1}{1-3\lambda}\left(\frac{1}{2}\sum_{x,y\in X}c(x,y)(u(x)-u(y))^2\right)\\
&=\frac{1}{1-3\lambda}C.
\end{aligned}
$$
We have
$$
\sum_{x,y\in S_n}c(x,y)\left((u(x)-u(\Phi_n(x)))^2+(u(\Phi_n(y))-u(y))^2\right)\le\frac{6\max{\myset{C_1,C_2}}}{1-3\lambda}C,
$$
and
$$
\sum_{x,y\in S_n}c(x,y)(u(x)-u(y))^2\le 3\sum_{x,y\in S_n}c(x,y)(u(\Phi_n(x))-u(\Phi_n(y)))^2+\frac{18\max{\myset{C_1,C_2}}}{1-3\lambda}C.
$$
Similarly, we have
$$
\sum_{x,y\in S_n}c(x,y)(u(\Phi_n(x))-u(\Phi_n(y)))^2\le 3\sum_{x,y\in S_n}c(x,y)(u(x)-u(y))^2+\frac{18\max{\myset{C_1,C_2}}}{1-3\lambda}C.
$$
Since
$$\lim_{n\to+\infty}\sum_{x,y\in S_n}c(x,y)(u(\Phi_n(x))-u(\Phi_n(y)))^2=+\infty,$$
we have
$$\lim_{n\to+\infty}\sum_{x,y\in S_n}c(x,y)(u(x)-u(y))^2=+\infty.$$
Therefore
$$C=\frac{1}{2}\sum_{x,y\in X}c(x,y)(u(x)-u(y))^2=+\infty,$$
contradiction! Hence $u|_{\dd X}$ is constant.
\end{proof}

\begin{mythm}\label{thm_ub}
If $\lambda\in(0,1/5)$ or $\beta\in(\beta^*,+\infty)$, then $(\Ee_K,\Ff_K)$ on $L^2(K;\nu)$ is trivial, that is, $\Ff_K$ consists of constant functions. Hence walk dimension of SG $d_w=\beta^*=\log5/\log2$.
\end{mythm}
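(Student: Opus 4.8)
The plan is to deduce this theorem from Theorem \ref{thm_constant} by replacing a general $u\in\Ff_K$ with its Poisson integral $Hu$, which is harmonic and continuous on $\mybar{X}$. First I would record the elementary equivalence $\lambda\in(0,1/5)\Leftrightarrow\beta\in(\beta^*,+\infty)$: since $\beta=-\log\lambda/\log2$ and $\beta^*=\log5/\log2$, the value $\lambda=1/5$ corresponds exactly to $\beta=\beta^*$, and $\lambda\downarrow0$ corresponds to $\beta\uparrow+\infty$. Thus it suffices to treat the case $\lambda\in(0,1/5)$, and the last assertion $d_w=\beta^*$ will follow by combining the present triviality statement with Corollary \ref{cor_E_K}, which already gives a nontrivial regular Dirichlet form for $\beta\in(\alpha,\beta^*)$.

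Now fix $u\in\Ff_K$, so that $u\in L^2(K;\nu)$ and $\int_K\int_K\frac{(u(x)-u(y))^2}{|x-y|^{\alpha+\beta}}\nu(\md x)\nu(\md y)<+\infty$. Using the Silverstein identity $\Ee_X(Hu,Hu)=\frac{1}{2}\pi(o)\int_{\dd X}\int_{\dd X}(u(x)-u(y))^2\Theta(x,y)\nu(\md x)\nu(\md y)$ together with the Na\"im kernel estimate (\ref{eqn_Theta}) --- both valid for every transient case $\lambda\in(0,1)$, in particular for $\lambda<1/5$ --- the harmonic function $Hu$ has finite discrete energy:
$$\frac{1}{2}\sum_{x,y\in X}c(x,y)(Hu(x)-Hu(y))^2\asymp\int_K\int_K\frac{(u(x)-u(y))^2}{|x-y|^{\alpha+\beta}}\nu(\md x)\nu(\md y)<+\infty.$$
Since $\lambda<1/5<1/3$, Lemma \ref{lem_ext} lets me extend $Hu$ to a continuous function $v\in C(\mybar{X})$. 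Then $v$ is continuous on $\mybar{X}$ with finite discrete energy, so Theorem \ref{thm_constant} (whose hypothesis $\lambda<1/5$ holds) yields that $v|_{\dd X}$ is constant, say $v|_{\dd X}\equiv c_0$.

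It remains to identify this constant boundary trace with $u$ itself $\nu$-almost everywhere, which I would do by a martingale argument. Since $Hu(x)=\int_{\dd X}K(x,\cdot)u\,\md\nu=\EE_x[u(Z_\infty)]$ and $u(Z_\infty)\in L^2(\PP_o)$ (because $Z_\infty\sim\nu$ under $\PP_o$ and $u\in L^2(\nu)$), the sequence $Hu(Z_n)=\EE_o[u(Z_\infty)\mid\mathcal{F}_n]$ is a uniformly integrable martingale converging $\PP_o$-a.s.\ to $u(Z_\infty)$. On the other hand, by Theorem \ref{thm_conv} we have $Z_n\to Z_\infty\in\dd X$ $\PP_o$-a.s., and since $v$ is continuous with $v=Hu$ on $X$, also $Hu(Z_n)=v(Z_n)\to v(Z_\infty)=c_0$. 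Equating the two limits gives $u(Z_\infty)=c_0$ $\PP_o$-a.s., and as the law of $Z_\infty$ under $\PP_o$ is $\nu$, I conclude $u=c_0$ $\nu$-a.e. Hence $\Ff_K$ consists only of constants, completing the proof.

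I expect the identification in the last paragraph to be the delicate point: because $u\in\Ff_K$ need not be continuous, the equality of the boundary trace of its harmonic extension with $u$ cannot be read off pointwise as in Lemma \ref{lem_bdy_har} (which assumed $u\in C(K)$), and must instead be obtained $\nu$-a.e.\ through convergence of the Poisson/conditional-expectation martingale $Hu(Z_n)$. One should also verify carefully that the energy comparison $\asymp$ and the finiteness needed for Lemma \ref{lem_ext} never use $\beta<\beta^*$: they rely only on $\lambda<1/3$ and on the Na\"im kernel estimate, both available in the present range.
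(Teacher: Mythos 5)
Your proposal is correct and follows the paper's skeleton exactly: pass to $v=Hu$, use the Silverstein identity together with the Na\"im kernel estimate (\ref{eqn_Theta}) to get finite discrete energy, extend continuously via Lemma \ref{lem_ext} (which only needs $\lambda<1/3$), and invoke Theorem \ref{thm_constant} to force $v|_{\dd X}$ to be constant. The one place you genuinely diverge is the identification of the boundary trace with $u$. The paper simply cites Lemma \ref{lem_bdy_har} to get $v|_{\dd X}=u$, even though that lemma is stated for $u\in C(K)$ while a generic $u\in\Ff_K$ is a priori only in $L^2(K;\nu)$; this is implicitly justified because $\beta>\beta^*>\alpha$, so the same H\"older embedding \cite[Theorem 4.11(iii)]{GHL03} already used in Corollary \ref{cor_E_K} furnishes a continuous modification of $u$, after which Lemma \ref{lem_bdy_har} applies verbatim. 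Your alternative --- viewing $Hu(Z_n)=\EE_o[u(Z_\infty)\mid\mathcal{F}_n]$ as a closed martingale, so that L\'evy's upward theorem gives $Hu(Z_n)\to u(Z_\infty)$ a.s.\ while continuity of $v$ and Theorem \ref{thm_conv} give $Hu(Z_n)=v(Z_n)\to v(Z_\infty)=c_0$ --- reaches the conclusion $u=c_0$ $\nu$-a.e.\ without any continuity assumption on $u$, and is arguably the cleaner way to close this gap; what it costs is only the standard verification that $u(Z_\infty)\in L^1(\PP_o)$ and that the Markov property identifies $\EE_o[u(Z_\infty)\mid\mathcal{F}_n]$ with $Hu(Z_n)$, both of which you note. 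Either route is valid; you correctly isolated the one delicate point of the argument.
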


\begin{proof}
For all $u\in\Ff_K$, let $v=Hu$ on $X$, then we have
$$\frac{1}{2}\sum_{x,y\in X}c(x,y)(v(x)-v(y))^2<+\infty.$$
Since $\lambda<1/5<1/3$, by Lemma \ref{lem_ext}, $v$ on $X$ can be extended continuously to $\mybar{X}$ still denoted by $v$. By Lemma \ref{lem_bdy_har}, we have $v|_{\dd X}=u$. By Theorem \ref{thm_constant}, we have $v|_{\dd X}$ is constant, hence $u$ is constant. $\Ff_K$ consists of constant functions.
\end{proof}


\def\cprime{$'$}

\end{document}